\newcommand{\git}{\mathbin{
  \mathchoice{/\mkern-6mu/}
    {/\mkern-6mu/}
    {/\mkern-5mu/}
    {/\mkern-5mu/}}}
\numberwithin{equation}{section}
\newtheorem{proposition}{Proposition}[section]
\newtheorem{lemma}[proposition]{Lemma}
\newtheorem{theorem}[proposition]{Theorem}
\newtheorem{corollary}[proposition]{Corollary}
\theoremstyle{definition}
\newtheorem{remark}[proposition]{Remark}
\newtheorem{definition}[proposition]{Definition}
\DeclareMathOperator{\Id}{Id}
\DeclareMathOperator{\tr}{tr}
\DeclareMathOperator{\Aut}{Aut}
\DeclareMathOperator{\Ric}{Ric}
\DeclareMathOperator{\Grass}{Grass}
\DeclareMathOperator{\Ch}{Ch}
\DeclareMathOperator{\SL}{SL}
\DeclareMathOperator{\Proj}{Proj}
\DeclareMathOperator{\V}{V}
\DeclareMathOperator{\Lie}{Lie}
\renewcommand{\phi}{\varphi}
\newcommand{\C}{\mathbb{C}}
\newcommand{\pr}{\mathbb{P}}
\renewcommand{\epsilon}{\varepsilon}
\newcommand{\scH}{\mathcal{H}}
\newcommand{\D}{\mathcal{D}}
\newcommand{\M}{\mathcal{M}}
\newcommand{\scO}{\mathcal{O}}
\newcommand{\ddb}{i\partial \bar\partial}
\newcommand{\scL}{\mathcal{L}}
\newcommand{\mfh}{\mathfrak{h}}
\newcommand{\mft}{\mathfrak{t}}
\newcommand{\mfp}{\mathfrak{p}}
\newcommand{\mfl}{\mathfrak{l}}
\newcommand{\scB}{\mathcal{B}}
\newcommand{\mfb}{\mathfrak{b}}
\newcommand{\scV}{\mathcal{V}}
\newcommand{\scU}{\mathcal{U}}
\title[Extremal metrics on fibrations]{Extremal metrics on fibrations}
\author[Ruadha\'i Dervan and Lars Martin Sektnan]{Ruadha\'i Dervan and Lars Martin Sektnan}
\address{Ruadha\'i Dervan, DPMMS, Centre for Mathematical Sciences, Wilberforce Road, Cambridge CB3 0WB, United Kingdom and Centre de math\'ematiques Laurent-Schwartz, \'Ecole Polytechnique, Cour Vaneau, 91120 Palaiseau, France}
\email{R.Dervan@dpmms.cam.ac.uk}
\address{Lars Martin Sektnan, D\'epartement de math\'ematiques, Universit\'e du Qu\'ebec \`a Montr\'eal, Case postale 8888, succursale
centre-ville, Montr\'eal (Qu\'ebec), H3C 3P8, Canada}
\email{lars.sektnan@cirget.ca}
\begin{document}

\maketitle

\begin{abstract} Consider a fibred compact K\"ahler manifold $X$ endowed with a relatively ample line bundle, such that each fibre admits a constant scalar curvature K\"ahler metric and has discrete automorphism group. Assuming the base of the fibration admits a twisted extremal metric where the twisting form is a certain Weil-Petersson type metric, we prove that $X$ admits an extremal metric for polarisations making the fibres small. Thus $X$ admits a constant scalar curvature K\"ahler metric if and only if the Futaki invariant vanishes. This extends a result of Fine, who proved this result when the base admits no continuous automorphisms. 

As  consequences of our techniques, we obtain analogues for maps of various fundamental results for varieties: if a map admits a twisted constant scalar curvature K\"ahler metric metric, then its automorphism group is reductive; a twisted extremal metric is invariant under a maximal compact subgroup of the automorphism group of the map;  there is a geometric interpretation for uniqueness of twisted extremal metrics on maps. 
\end{abstract}

\maketitle

\section{Introduction}
A basic question in complex geometry is whether or not a given K\"ahler manifold admits an extremal K\"ahler metric. A special case of an extremal K\"ahler metric is one that has constant scalar curvature (henceforth cscK), as the extremal condition simply means that the $((1,0)$-part of the) gradient of the scalar curvature is a holomorphic vector field. One motivation for this question is the link with moduli theory: it is expected that one can form well-behaved moduli spaces of such manifolds. The goal of the present work is to give a new construction of extremal K\"ahler manifolds, using some ideas from moduli theory.

To motivate our work, we first review some older constructions of extremal K\"ahler metrics. The first general construction of such metrics is due to Hong \cite{hong1}, who considered fibrations $\pi: \pr(E) \to B$, where $\pr(E)$ denotes the projectivisation of a vector bundle $E$ on a compact complex manifold $B$. Suppose $B$ admits a line bundle $L$ with a cscK metric, and suppose in addition that $E$ admits a Hermite-Einstein metric. Assume moreover that the data involved does not admit continuous automorphisms, i.e. $\Aut(X,L)$ is discrete and $E$ is indecomposable. Then Hong proves that $\pr(E)$ admits a cscK metric in the class $rL+\scO_{\pr(E)}(1)$ for $r\gg 0$ \cite{hong1}, where we have used additive notation for the tensor product of line bundles, and suppressed pullbacks. In later work Hong relaxes the assumption on the automorphisms, showing that provided $E$ is $\Aut(X,L)$-invariant then the class $rL+\scO_{\pr(E)}(1)$ still admits a cscK metric provided the so-called Futaki invariant vanishes on $\pr(E)$ \cite{hong2}. Most recently Hong has proved that the analogous result holds even if $E$ is \emph{not} $\Aut(X,L)$-invariant, however in this case $E$ is required to satisfy a certain finite-dimensional stability condition \cite{hong3}. The latter result was extended to the case that $B$ instead admits an extremal metric by Lu-Seyyedali \cite{lu-seyyedali}, and related results have also been proven in the case that $E$ is no longer indecomposable \cite{bronnle,acgt11}. 

Projective bundles as above form one extreme of the types of fibrations. The other extreme is a fibration $\pi: X \to B$ such that each fibre is a smooth projective manifold with \emph{discrete} automorphism group, but with possibly varying complex structure (thus the map $\pi$ is a holomorphic submersion). This is the situation considered by Fine \cite{fine1}. Assume that $X$ admits a relatively ample line bundle $H$ such that the fibre $(X_b,H_b)$ has discrete automorphism group and admits a cscK metric for all $b\in B$, and that $B$ admits an ample line bundle $L$ with a \emph{unique} twisted cscK metric, i.e. one satisfying $S(\omega) - \Lambda_{\omega}\alpha = const$, for $\alpha$ a closed $(1,1)$-form which is determined by the geometry of the fibration. Then Fine proves that $X$ admits a cscK metric in the class $rL+H$ for $r \gg 0$. In fact Fine's proof contains a gap in the case the base of the fibration has complex dimension at least two, in that the uniqueness assumption he makes is not enough guarantee that the linear operator he utilises is invertible, as we discuss further in Remark \ref{rmk:second-uniqueness}. 

Our main result is the following:

\begin{theorem}\label{intromainthm}
Suppose $(X,H) \to (B,L)$ is a fibration, with $B$ and $X$ compact, such that each fibre $(X_b,H_b)$ has discrete automorphism group and admits a cscK metric for all $b\in B$. Suppose in addition that $(B,L)$ admits a twisted extremal metric with twist $\alpha$ a closed $(1,1)$-form as above, and that the extremal vector field on $B$ lifts to $X$. Then $(X,rL+H)$ admits an extremal metric for all $r\gg 0$. 
\end{theorem}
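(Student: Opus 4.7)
The natural strategy is an adiabatic limit argument of the kind pioneered by Hong and Fine: construct approximate extremal metrics on $(X,rL+H)$ for $r$ large, then perturb to genuine extremal metrics via an implicit function theorem. First I would assemble a starting metric $\omega_r = r\omega_B + \omega_X$, where $\omega_B$ is the twisted extremal metric on $(B,L)$ and $\omega_X$ is a closed $(1,1)$-form on $X$ in the class of $H$ whose restriction to each fibre $X_b$ is the (unique, by discreteness of fibre automorphisms) cscK metric in $H_b$. Expanding $S(\omega_r)$ in powers of $r^{-1}$ gives a constant leading term equal to the fibrewise scalar curvature, while the first-order correction, after averaging over fibres, is exactly $S(\omega_B)-\Lambda_{\omega_B}\alpha$, with $\alpha$ the Weil--Petersson form in the statement. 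The twisted extremal hypothesis on $B$ therefore kills this leading error up to a function whose $\omega_B$-gradient is the extremal vector field.

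Next I would inductively refine $\omega_r$ to an approximate extremal metric $\omega_{r,N}=\omega_r+\sum_{j=1}^{N}r^{-j}i\partial\bar\partial\phi_j$ whose scalar curvature gradient agrees with the lift $V$ of the extremal vector field of $B$ modulo $O(r^{-N-1})$, for any chosen $N$. The obstruction equation at each order is linear of the form $L\phi_j=\psi_j$, where after splitting potentials into fibre-parallel and fibre-transverse components $L$ is block-triangular: its diagonal blocks are the fibrewise Lichnerowicz operator $\mathcal{D}_b^*\mathcal{D}_b$, which is invertible by discreteness of $\Aut(X_b,H_b)$, and a twisted Lichnerowicz-type operator on $B$ built from $\omega_B$, $\alpha$ and the extremal vector field. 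Solvability requires projecting the base component of $\psi_j$ orthogonally to holomorphy potentials on $(B,\omega_B)$; any discrepancy is absorbed into an $r$-dependent adjustment of the lifted extremal vector field, which is possible precisely because the base extremal vector field lifts to $X$ by hypothesis.

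The core analytic step, and the main obstacle, is a quantitative inverse function theorem. The linearised extremal operator $L_r:=\mathcal{D}_{\omega_{r,N}}^*\mathcal{D}_{\omega_{r,N}}$ on $X$ develops small eigenvalues of order $r^{-2}$ along potentials pulled back from $B$, and its kernel consists of holomorphy potentials on $(X,\omega_r)$; since fibre automorphisms are discrete, all such potentials descend from $B$. Working equivariantly under a maximal compact torus $T\subset\Aut(X,H)$ containing the lifted extremal vector field, one needs a uniform estimate of the form $\|\phi\|_{C^{4,\beta}}\le Cr^{2}\|L_r\phi\|_{C^{0,\beta}}$ on the $T$-invariant subspace orthogonal to holomorphy potentials, proved by separating fibre-high- and base-low-frequency components and using the fibrewise invertibility uniformly in $b$ together with the twisted extremal structure on the base. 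Because the nonlinear error of the extremal operator at $\omega_{r,N}$ is $O(r^{-N-1})$, choosing $N$ large enough to offset the $r^{2}$ loss in the inverse makes the quantitative implicit function theorem apply, producing a $T$-invariant extremal metric in $rL+H$ for all $r\gg 0$. The novelty relative to Fine's setting is performing this inversion in the presence of a non-discrete base automorphism group and in the extremal rather than cscK framework, which is also where the lifting hypothesis on the extremal vector field is essential.
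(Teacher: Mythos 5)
Your proposal is correct and follows essentially the same strategy as the paper: an adiabatic expansion starting from $\omega_r = r\omega_B + \omega_0$, an inductive construction of approximate solutions whose obstruction at each order is handled by the fibrewise Lichnerowicz operator together with the twisted Lichnerowicz operator on the base (with the cokernel absorbed into an adjustment of the lifted extremal vector field, working $T$-invariantly), and a quantitative implicit function theorem using a polynomially bounded right inverse of the linearisation whose kernel/cokernel is identified with holomorphy potentials descending from $B$. The only differences are cosmetic (the paper's inverse bound is $O(r^3)$ in Sobolev norms rather than your $O(r^2)$, compensated exactly as you indicate by taking the approximation order large enough, and the paper justifies the lifting of all of $\Aut(p)$ to $X$ via a moduli-space argument rather than only the extremal field).
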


Here a twisted extremal metric satisfies the condition that the gradient of $S(\omega) - \Lambda_{\omega} \alpha$ is a holomorphic vector field, called the extremal vector field. In the cscK case we obtain:

\begin{corollary}\label{intromaincor}
With all notation as above, $(X,rL+H)$ admits a cscK metric for all $r\gg 0$ if and only if the Futaki invariant vanishes. \end{corollary}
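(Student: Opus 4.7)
The plan is to deduce the corollary directly from Theorem \ref{intromainthm} together with the standard characterisation of when an extremal K\"ahler metric is actually cscK. In one direction the implication is immediate: if $(X,rL+H)$ carries a cscK metric, then the classical Calabi--Futaki obstruction forces the Futaki invariant of $(X,rL+H)$ to vanish on every holomorphic vector field admitting a holomorphy potential. This is a completely general fact about polarised K\"ahler manifolds and uses nothing specific to the fibration structure.

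For the converse, I would apply Theorem \ref{intromainthm} to produce, for every $r \gg 0$, an extremal K\"ahler metric $\omega_r$ in the class $rL+H$. By the very definition of extremality, the $(1,0)$-gradient $V_r = \nabla^{1,0}_{\omega_r} S(\omega_r)$ is a holomorphic vector field on $X$, and the function $S(\omega_r) - \bar{S}_r$ serves as a holomorphy potential for $V_r$, where $\bar{S}_r$ is the (topologically determined) average scalar curvature of any K\"ahler metric in $rL+H$. Pairing via the Futaki character then produces the standard identity
\[
\operatorname{Fut}(V_r) \;=\; \int_X \bigl(S(\omega_r) - \bar{S}_r\bigr)^{2}\, \frac{(rL+H)^n}{n!}.
\]
Under the assumption that the Futaki invariant of $(X,rL+H)$ vanishes, its value on the particular holomorphic vector field $V_r$ is in particular zero, so the integral above vanishes. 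Since its integrand is non-negative, this forces $S(\omega_r) \equiv \bar{S}_r$, i.e.\ $\omega_r$ is cscK.

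In summary, no new analytic input is needed beyond Theorem \ref{intromainthm}: once extremal metrics exist in the classes $rL+H$, the passage from extremal to cscK under a vanishing Futaki hypothesis is a purely formal consequence of the definitions of the extremal vector field and of the Futaki character. The only minor point of care is to verify that the Futaki invariant of $(X,rL+H)$ is being interpreted as a character on the reduced automorphism Lie algebra, so that one may legitimately evaluate it on $V_r$. There is therefore no genuine obstacle in the corollary itself; the entire difficulty is absorbed into Theorem \ref{intromainthm}.
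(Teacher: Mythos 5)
Your proposal is correct and follows exactly the route the paper takes: the paper deduces the corollary from Theorem \ref{intromainthm} combined with the standard fact that an extremal metric is cscK if and only if the Futaki invariant vanishes, which is precisely your argument via the identity $\operatorname{Fut}(V_r) = \int_X (S(\omega_r)-\bar S_r)^2\,\omega_r^n$.
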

We remark, though, that it may very well be that the Futaki invariant does not vanish for any K\"ahler classes on $X$, see e.g. \cite[Section 3]{acgt08}. Thus it seems important to allow extremal metrics on $X$ in general.

In particular we are \emph{not} assuming $\omega$ is a \emph{unique} solution of the twisted extremal equation. We shall see in Remark \ref{rmk:second-uniqueness} that the uniqueness assumption in Fine's result is analogous to Hong's assumption that the base has no automorphisms in Hong's first result \cite{hong1}, and thus the corollary above is analogous to Hong's second result \cite{hong2}. Surprisingly, while Hong had to assume that the vector bundle $E$ he considers is $\Aut(X,L)$-invariant in \cite{hong2}, we do not need to make any such an assumption on the fibration $X\to B$. Thus our main result proves also the analogous result of Hong's most recent work described above \cite{hong3}, and also that of Lu-Seyyedali \cite{lu-seyyedali}. Moreover, our result fixes the gap in Fine's work in the case that the base of the fibration has dimension at least two, and also shows that his uniqueness assumption is essentially unnecessary.

The starting point of our result is the perspective of \cite{dervan-ross}, that twisted cscK and extremal metrics are best understood when $\alpha$ is the pullback of some K\"ahler form from a map $p: B \to \M$. Indeed in this case, through \cite{dervan-ross,twisted}, the existence of a twisted cscK metric is conjecturally equivalent to a notion of K-stability for the map $p: B\to \M$ (with one direction of this conjecture proven in \cite{twisted}). To view $\alpha$ in this way, we first observe that $\alpha$ can be identified with a Weil-Petersson metric induced from the cscK metrics on the fibres $(X_b,L_b)$, as introduced by Fujiki-Schumacher \cite{fujiki-schumacher}.

In order to see $\alpha$ as the pullback of a \emph{K\"ahler} form through a morphism, we wish to use a moduli space which parametrises the fibres. As this is rather fundamental to our approach, we do this in two ways. The first is to directly appeal to the construction of Fujiki-Schumacher of a moduli space of cscK manifolds \cite{fujiki-schumacher}, on which they show the Weil-Petersson metric is K\"ahler with $\alpha \in c_1(L_{CM})$, where $L_{CM}$ is the CM-line bundle of \cite{fujiki-schumacher,tian-cm}. The second is a more direct construction of the moduli space via algebro-geometric methods, by using Donaldson's fundamental result that a projective manifold which admits a cscK metric, and with discrete automorphism group, is asymptotically Chow stable \cite{donaldson-projective-embeddings}. We can then construct an appropriate moduli space as a subvariety of a quotient of a moduli space of certain Chow stable varieties, on which the Weil-Petersson metric is shown to have the desired positivity properties.

The next step in the proof is to understand the properties of the linearisation of the twisted extremal operator $\scL_{\alpha}$ at a twisted extremal metric. This is the key step in forming an approximate solution to the extremal equation on $X$ itself. We show that when $\alpha$ is the pullback of a K\"ahler form through $p$ as above, the kernel of $\scL_{\alpha}$ can be identified with the vector fields whose flows induce automorphisms of the map $p: B\to \M$. Since we have realised $\M$ as a moduli space parametrising the fibres, using some moduli theory we show  that these automorphisms are precisely those which lift to $X$ (this is clearest working with the moduli space as a quotient of a Chow variety), and therefore that all holomorphic vector fields on $X$ can be identified with lifts of holomorphic vector fields on $B$ preserving $p$. The proof of this crucially uses that the fibres have discrete automorphism group: as one can see in the example of projective bundles and which we explain in Remark \ref{lifting-remark-bundle}, the analogous result does not hold for arbitrary fibrations. While $\scL_{\alpha}$ is not invertible in our situation, by searching for extremal metrics on $X$ instead of cscK metrics we show that one can adapt Fine's technique in the automorphism-free case to produce an approximately extremal metric. An implicit function theorem argument then gives a genuine extremal metric, proving Theorem \ref{intromainthm}. Corollary \ref{intromaincor} is then a simple consequence of the fact that an extremal metric is cscK if and only if the Futaki invariant vanishes.

\subsection{The geomtery of maps} As mentioned above, twisted cscK metrics are conjecturally linked to stability notions for maps between polarised varieties, and hence to the moduli theory of maps \cite{dervan-ross,twisted}. One of the fundamental aspects of the relationship between (genuine) cscK metrics and stability is the link between the scalar curvature operator and the geometry of the polarised variety $(X,H)$ (here $H$ is an ample line bundle on $X$). The basic result along these lines is that the kernel of the Lichnerowicz operator (the linearisation of the scalar curvature at a cscK metric) can be identified with the Lie algebra of the automorphism group $\Aut(X,H)$. We shall show in Section \ref{sec:lifting} that an analogous result holds for maps, namely the kernel of the ``twisted'' Lichnerowicz operator on a map $p: (B,L) \to (M,L_M)$ can be identified with the Lie algebra $\mfp$ of the automorphism group of the \emph{map} $p$. Our first concrete consequence of this identification is the following analogue of Matsushima's theorem:

\begin{theorem}\label{intro:red} Suppose the map $p$ admits a twisted cscK metric. Then the Lie algebra $\mfp$ is reductive. \end{theorem}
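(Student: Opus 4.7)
The plan is to adapt the Matsushima--Lichnerowicz proof of reductivity in the cscK case to the twisted setting. The essential input, established in Section \ref{sec:lifting}, is the identification of $\ker \scL_\alpha$ with the space of (complex-valued) holomorphy potentials for elements of $\mfp$, where $\scL_\alpha$ denotes the twisted Lichnerowicz operator, i.e.\ the linearisation of $S(\omega) - \Lambda_\omega \alpha$ at a twisted cscK metric $\omega$ on $(B,L)$.

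Two analytic facts about $\scL_\alpha$ are needed: that it is formally self-adjoint and that it is real. Self-adjointness follows from the fact that $\scL_\alpha$ is the Hessian of the twisted Mabuchi functional at a critical point, while reality is immediate from the reality of the $(1,1)$-form $\alpha$. Given a complex potential $f$ with $\scL_\alpha f = 0$, reality gives $\scL_\alpha \bar f = 0$, so writing $f = f_1 + i f_2$ one obtains $\scL_\alpha f_1 = \scL_\alpha f_2 = 0$. By the identification of $\ker \scL_\alpha$ with $\mfp$, each $f_j$ is a real-valued holomorphy potential and hence corresponds to a vector field in $\mfp$ that is simultaneously holomorphic and Killing for $\omega$. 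Letting $\mfk \subset \mfp$ denote the subspace of such Killing elements yields a Cartan-type decomposition
$$\mfp = \mfk \oplus J\mfk.$$

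To conclude reductivity one observes that $\mfk$ integrates to a compact subgroup: elements of $\mfk$ generate isometries of $(B,\omega)$ that lift to automorphisms of the map $p$, and such isometries form a closed subgroup of $\Isom(B,\omega)$, which is itself compact by Myers--Steenrod. Thus $\mfp$ is the complexification of a compact real Lie algebra and so is reductive. The most delicate point is ensuring that the identification $\ker \scL_\alpha \cong \mfp$ is compatible with the real-imaginary splitting — that real-valued elements of $\ker \scL_\alpha$ correspond precisely to the Killing vector fields in $\mfp$, rather than to some other real subspace. This compatibility reduces to the standard fact that on a Kähler manifold a real holomorphy potential generates a Killing field, applied within the subspace of vector fields lifting through $p$.
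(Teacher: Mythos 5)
Your overall strategy is the one the paper follows: show that $\scL_{\alpha}$ is a real operator at a twisted cscK metric, deduce that $\ker\scL_{\alpha}$ is closed under complex conjugation, and conclude $\mfp\cong\mft\oplus i\mft$, hence reductivity. However, the justification you give for the key analytic input is wrong: the reality of $\scL_{\alpha}$ is emphatically \emph{not} ``immediate from the reality of the $(1,1)$-form $\alpha$''. In coordinates one has
$$\scL_{\alpha}(\phi) = -\left(\Delta^2 \phi + R^{\bar k j} \nabla_j \nabla_{\bar k} \phi\right)  +  \langle i \partial \overline{\partial} \phi, \alpha \rangle - g^{j\bar k}\nabla_j \big(S(\omega) - \Lambda_{\omega}\alpha\big)\nabla_{\bar k} \phi$$
up to overall sign conventions. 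The Lichnerowicz operator $\mathcal{D}^*_{\omega}\mathcal{D}_{\omega}$ is itself \emph{not} real in general, because of the first-order term $g^{j\bar k}\nabla_j S(\omega)\nabla_{\bar k}\phi$; likewise the term $\tfrac12\langle\nabla\Lambda_{\omega}\alpha,\nabla\phi\rangle = g^{j\bar k}\nabla_j(\Lambda_{\omega}\alpha)\nabla_{\bar k}\phi$ has a nontrivial imaginary part for real $\phi$ (only the zeroth- and second-order pieces, and $\langle i\partial\bar\partial\phi,\alpha\rangle$, are manifestly real). What makes $\scL_{\alpha}$ real is that these two non-real contributions combine into the single drift term $g^{j\bar k}\nabla_j(S(\omega)-\Lambda_{\omega}\alpha)\nabla_{\bar k}\phi$, which vanishes precisely because $S(\omega)-\Lambda_{\omega}\alpha$ is \emph{constant}, i.e.\ because $\omega$ is twisted cscK. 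Your argument as written would apply verbatim to a twisted \emph{extremal} metric, where $\scL_{\alpha}$ is genuinely non-real and reductivity can fail; this is exactly the phenomenon behind Calabi's refinement of Matsushima's theorem and behind the finer decomposition $\mfp\cong\mft\oplus i\mft\oplus_{\lambda>0}\mfp^{\lambda}$ proved in the paper for the extremal case. So the twisted cscK hypothesis must be invoked at precisely this point, not merely the reality of $\alpha$.

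Two smaller remarks. First, in the paper's normalisation $g^{j\bar k}\partial_{\bar k}f\mapsto\tfrac12(\nabla u+J\nabla v)$ it is the \emph{purely imaginary} holomorphy potentials that correspond to Killing fields, not the real ones; this is only a convention mismatch and does not affect the substance, since conjugation-invariance of $\ker\scL_{\alpha}$ yields the splitting $\mft\oplus i\mft$ either way. Second, your concluding step --- that $\mft$ integrates to a (closed subgroup of a) compact isometry group, so $\mfp$ is the complexification of a compact Lie algebra and hence reductive --- is correct and slightly more explicit than the paper, which stops at the decomposition $\mfp\cong\mft\oplus i\mft$.
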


We further give a detailed description of the automorphism group on a map admitting a twisted extremal metric. Similar ideas give the following.

\begin{theorem}\label{intro:unique} A twisted extremal metric is unique up to the action of the automorphism group of the map. Moreover, a twisted extremal metric is invariant under a maximal compact subgroup the same automorphism group.\end{theorem}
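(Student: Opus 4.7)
The plan is to adapt two classical results for extremal metrics to the twisted setting for maps: Calabi's theorem on the isometry group of an extremal metric, and the Berman--Berndtsson (or Chen--Tian) uniqueness theorem. Both rely on the identification of $\ker \scL_\alpha$ with the Lie algebra $\mfp$ of the automorphism group of the map, established earlier in Section \ref{sec:lifting}.

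\emph{Maximal compact subgroup.} Let $\omega$ be a twisted extremal metric with extremal vector field $V = \nabla^{1,0}(S(\omega) - \Lambda_{\omega} \alpha)$. The imaginary part of $V$ is a Killing field, hence lies in the isometry algebra $\mfk \subset \mfp$. Following Calabi, I would first show that $V$ is central in $\mfk$ and then consider the centralizer $\mfp^V$ of $V$ in $\mfp$; a reductivity argument for $\mfp^V$ (proved by the same method as Theorem \ref{intro:red}, now applied to the twisted extremal setting via the modified operator whose kernel on $V$-invariant functions is still identifiable with $\mfp^V$) decomposes it as $\mfk_0 \oplus J\mfk_0$ for a compact form $\mfk_0$. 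One then checks $\mfk = \mfk_0$ by showing that Hamiltonian potentials for real holomorphic vector fields in $\mfp^V$ orthogonal to $J\mfk_0$ vanish, using the same Lichnerowicz-type identities that produced the kernel identification for $\scL_\alpha$. The upshot is that $\Isom_0(\omega) \cap \Aut(p)$ is a maximal compact subgroup of $\Aut(p)$.

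\emph{Uniqueness.} Let $\omega_0$ and $\omega_1$ be two twisted extremal metrics in the same K\"ahler class on $(B,L)$. After acting by the automorphism group we may assume their extremal vector fields agree with a fixed $V$. Consider the twisted Mabuchi functional $\mathrm{Mab}_\alpha$, whose Euler--Lagrange equation is the twisted cscK equation, and its modification $\mathrm{Mab}_\alpha^V$ obtained by subtracting a Futaki-type term associated to $V$, whose critical points among $V$-invariant potentials are exactly the twisted extremal metrics with extremal vector field $V$. The twist contributes an additional term of Aubin--Yau $J$-functional type which is convex along weak $C^{1,1}$-geodesics, while the untwisted part is convex by the theorem of Berman--Berndtsson. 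Connecting $\omega_0$ and $\omega_1$ by a weak geodesic and using the critical-point conditions at the endpoints, convexity forces $\mathrm{Mab}_\alpha^V$ to be constant along the geodesic. The equality case of Berman--Berndtsson's convexity theorem then produces a real-holomorphic vector field on $B$ generating the geodesic, and the identification of $\ker \scL_\alpha$ with $\mfp$ shows this vector field lifts to an automorphism of the map, giving the desired element of $\Aut(p)$ taking $\omega_0$ to $\omega_1$.

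The main obstacle I expect is the equality case of the convexity for $\mathrm{Mab}_\alpha^V$: one must verify that Berman--Berndtsson's analysis of the degenerate case (which rules out non-geodesic behaviour via subharmonicity of Bergman-kernel-type functions) goes through in the presence of the pulled-back twist $\alpha = p^*\omega_{\M}$, and crucially that the vector field it produces preserves the map $p$. The latter point is where the identification of $\ker \scL_\alpha$ with $\mfp$ plays its essential role: the equality case yields a real holomorphic vector field whose potential lies in $\ker \scL_\alpha$, and hence by the prior work is automatically an element of $\mfp$. A secondary technical point is ensuring that extremal vector fields of different twisted extremal metrics can indeed be conjugated to a common one, which should follow from the uniqueness of the extremal vector field in the relevant extremal cone, adapted to the twisted setting.
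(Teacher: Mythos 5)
Your proposal follows essentially the same route as the paper: the invariance under a maximal compact subgroup is obtained via the Calabi--Lichnerowicz structure theory for $\mfp$ at a twisted extremal metric (the paper's Theorem \ref{thm:reductivity} and Corollary \ref{thm:isometries}), and uniqueness is obtained from convexity of the (modified) twisted Mabuchi functional along weak geodesics \`a la Berman--Berndtsson, with the key new input being that the Hessian degenerates exactly along $\ker\scL_\alpha\cong\mfp$ (the paper's Remark \ref{rmk:uniqueness}). The points you flag as potential obstacles --- the equality case of convexity with a semipositive pulled-back twist, and conjugating the extremal vector fields --- are handled in the paper exactly as you suggest.
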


For the first statement our main contribution is the geometric interpretation rather than the uniqueness result \emph{per se}. Deep work of Berman-Berndtsson \cite{berman-berndtsson} proves uniqueness of extremal metrics, and uniqueness of twisted extremal metrics when the twisting form $\alpha$ is positive (so from our perspective $p$ is an embedding). In fact their techniques apply to a broader class of equations without change, provided one understands the Hessian of the appropriate log norm functional. For us this functional is the twisted Mabuchi functional, and we show the Hessian of this functional degenerates precisely along the automorphisms of the map $p$, leading to the above statement. In particular, it follows (see Remark \ref{rmk:second-uniqueness}) that the uniqueness assumption in Fine's work \cite{fine1} is equivalent to demanding that the total space of the fibration $(X,rL+H)$ admit no continuous automorphisms. The automorphism invariance of a twisted extremal metric is a consequence of the structure theory of the automorphism group of the map that we develop, in a similar way to Calabi's results for extremal metrics. We hope these results demonstrate clearly that the natural geometric setting for the study of twisted cscK metrics is on maps between polarised varieties. 

Twisted extremal metrics play an analogous role to extremal metrics for polarised varieties, for example they are critical points of the twisted Calabi functional. It is thus natural to expect that there is a stability notion associated to the existence of twisted extremal metrics, extending the notion introduced by Sz\'ekelyhidi for extremal metrics \cite{szekelyhidi-extremal} and building on his finite dimensional stability interpretation for critical points of the norm squared of a moment map. The link between critical points of moment maps and moduli theory was elucidated by Atiyah-Bott \cite{atiyah-bott}, and thus it is natural to expect twisted extremal metrics similarly play an important role in the moduli theory of maps. We refer to \cite{stoppa-szekelyhidi,relative} for results linking extremal metrics to stability notions.

\subsection{Moduli} As remarked above, we give a construction the moduli space parametrising the fibres as a quotient of a certain Chow variety. This technique allows us to give new, simple proofs (though perhaps known to experts) of two deep results of Fujiki-Schumacher \cite{fujiki-schumacher} regarding the properties of the moduli space cscK manifolds: such a moduli space is automatically separated, and any proper subspace of such a moduli space is projective. 

\subsection{Examples}

While our main interest in Theorem \ref{intromainthm} is in the interplay between the existence theory for extremal metrics and the moduli theory of the fibres, it is an important problem to give new explicit examples of extremal K\"ahler manifolds. One way of constructing such examples from our main result, at least modulo a result regarding the existence of twisted extremal metrics on blowups, is as follows. We begin with a fibration $\pi: (X,H)\to (B,L)$ with $B$ a Riemann surface of genus two and such that all fibres are Riemann surfaces of genus at least two. Such examples were considered by Fine \cite{fine1}, who showed that such $B$ admit a solution of the twisted cscK equation for all $L$ for the relevant twisting induced by the moduli map. Consider the product $(Y = B\times \pr^1,L_Y = p_1^*L\otimes p_2^*\scO(1))$, and the pullback family $\pi': (X',H') = (X,H)\times_B (B,H) \to (Y,L_Y)$. Then $(Y,L_Y)$ admits a twisted cscK metric with respect to the induced Weil-Petersson metric on $Y$. A well-known result of Arezzo-Pacard-Singer states that if one blows-up an automorphism invariant point on an extremal K\"ahler manifold $(Z,L_Z)$, then the blow-up $(Bl_pZ,L_Z - \epsilon E)$ admits an extremal metric with $E$ the exceptional divisor and for $\epsilon$ sufficiently small \cite{APS}. Given our linearisation results, it is natural to expect that the same is true in the twisted setting (with the automorphism group of the K\"ahler manifold replaced by the automorphism group of the map), and hence blowing-up an automorphism invariant point on our $Y$ should produce a twisted extremal metric on $(Bl_pY, L_Y - \epsilon E)$ with respect to the relevant twisting. Finally, applying our main result would then prove the existence of an extremal metric on the total space of the induced Riemann surface fibration over $(Bl_pY, L_Y - \epsilon E)$, which is a projective 3-fold. We emphasise that this is a different construction to producing an extremal metric on the natural fibration over $(Y,L_Y)$ and then blowing-up.

Furthermore,  we expect situations to which our construction applies to be rather abundant, since stability of $p$ (and hence the existence of twisted cscK or extremal metrics) is expected to be a very common property. In fact, through the link with stability, we hope to produce new examples in the following situation. Suppose $p: B\to \M$ is a Fano map, which means that $L = -K_B-L_{CM}$ is ample. The analogue of the Yau-Tian-Donaldson conjecture described in \cite{dervan-ross,twisted} predicts that the existence of a twisted K\"ahler-Einstein metric (hence twisted cscK) on  $p$ is equivalent to the map $p$ being equivariantly K-stable. There are now several ways of producing explicit examples of (equivariantly) K-stable Fano \emph{varieties}, for example the threefolds with a two-dimensional torus action of Ilten-S\"uss \cite{ilten-suess}, which admit K\"ahler-Einstein metrics by the work of Chen-Donaldson-Sun \cite{CDS} and Datar-Sz\'ekelyhidi \cite{datar-szekelyhidi}. Thus by analogy, assuming an analogue of the Yau-Tian-Donaldson conjecture for Fano maps can be proved, we hope to produce cscK metrics on fibrations, where the base is a Fano map. We leave this for future work. 

\vspace{4mm} \noindent {\bf Outline:} We begin in Section \ref{sec:prelims} with preliminaries detailing the geometry of fibrations and giving some basic information on twisted extremal metrics.  Section \ref{sec:lifting} calculates the linearisation of the twisted extremal operator, and gives an interpretation of its kernel as vector fields preserving the map. The moduli theory is also discussed in Section \ref{sec:lifting}, leading to the fact that the vector fields in the appropriate kernel are precisely those that lift to the total space of the fibration. Section \ref{sec:invariance} continues with general theory of maps, proving that the existence of a twisted extremal metric imposes certain constraints on the automorphism group of the map, proving Theorems \ref{intro:unique} and \ref{intro:red}. We prove the existence of an approximately extremal metric in Section \ref{sec:approx}, and use an implicit function theorem argument to conclude the existence of a genuine extremal metric in Section \ref{sec:ift}.

 \vspace{4mm} \noindent {\bf  Acknowledgements:} RD received funding from the ANR grant ``GRACK''. LS received funding from CIRGET.

\vspace{4mm} \noindent {\bf Notation:} For a holomorphic submersion $\pi: X\to B$, the fibres will be denoted $X_b$ for $b\in B$ and are always assumed to have fixed dimension $m$. We suppress pullbacks of line bundles under this map, and use additive notation for line bundles, so that if $H$ is a relatively ample line bundle on $X$ and $B$ is an ample line bundle on $B$ then $rL+H$ denotes the line bundle $(\pi^*L)^{\otimes r} \otimes H$ on $X$.  

We will often have a map $p: B\to M$, and will let $\mfb$ denote the Lie algebra of $\Aut(B,L)$, with $\mfp$ the Lie subalgebra of holomorphic vector fields that also preserve $p$ (in a sense that we shall define). We denote $\mfh$ the Lie algebra of holomorphic vector fields on $X$ which have a zero (hence lift to one, and so any, ample line bundle). We shall see that when $M$ is a moduli space, one can identify $\mfh$ and $\mfb$ naturally. Fixing a maximal compact torus $\mft$ of $\mfp$ will then induce a maximal torus $\mfl$ of $\mfh$. Given a K\"ahler metric, the potentials for these vector fields will be denoted with a bar, for example $\overline \mfh$. The extremal vector field on $B$ will be denoted $\nu$. 

\section{Preliminaries}\label{sec:prelims}

\subsection{Fibrations}\label{subsec:fibrations} We recall the setup which is largely the same as considered by Fine \cite{fine1}. Let $\pi: X\to B$ be a holomorphic submersion. We denote by $\scH =  \pi^*TB$ the horizontal tangent bundle, and denote by $\scV$ the vertical tangent bundle. The fibration structure induces a short exact sequence of holomorphic vector bundles on $X$ $$0 \to \scV\to TX \to \scH \to 0.$$ 

Suppose now that $H$ is a relatively ample line bundle on $X$, endowed with a relatively K\"ahler metric $\omega_0 \in c_1(H)$. The main example is when each fibre $(X_b,H_b)$ admits a cscK metric and has discrete automorphism group, as then these metrics glue to a relatively K\"ahler metric on $H$. This in turn induces a metric on $\scV$ and hence one obtains a metric on $\det \scV^*$, which is simply the fibrewise Ricci curvature of $\omega_0$ \cite[Section 3.1]{fine1}. Denote this form by  $\rho \in c_1(\scV) = c_1(\det \scV)$. Note that the determinant $\det \scV$ is simply $-K_{X/B}$, the relative anti-canonical class.

Since $\omega_0$ restricts to a non-degenerate metric on each fibre, one obtains a splitting in the smooth category $TX \cong \scV \oplus \scH$ by $$\scH_x = \{ u \in T_xX\ |\ \omega_0(u,v) = 0 \textrm{ for all } v \in \scV\}.$$ This further defines splittings in the smooth category of any tensor on $X$ and, following \cite[Section 3.1]{fine1}, to any tensor we shall refer to the components purely from $\scV$ and $\scH$ as the \emph{vertical} and \emph{horizontal} components, respectively. We will also let $\omega_b$ denote the metric on $\scV$ induced by $\omega_0$.

Given a function $\phi$ on $X$ one obtains a function $\int_{X/B}\phi\omega^m$ by integrating over the fibres, i.e. $$\left(\int_{X/B}\phi\omega^m\right)(b) = \int_{X_b} \phi|_{X_b} \omega_b^m,$$ where $m$ is the fibre dimension. One can generalise this as follows. For a $(p,p)$-form $\eta$ on $X$ one obtains a $(p-m,p-m)$ form $\int_{X/B}\eta$ on $B$ by first using the submersion structure to write $\eta = \psi \wedge \pi^*\kappa$ for $\kappa$ a $(p-m,p-m)$-form on $B$,  and then defining $$\int_{X/B} \eta = \left(\int_{X/B}\psi\right)\kappa.$$ As in Fine's work \cite{fine1,fine2}, applying this to the horizontal component of $\rho$ gives a $(1,1)$-form $\alpha$ on $B$, which is closed. Setting $\rho_{H}$ to be the horizontal component of $\rho$, explicitly we have $$\alpha = -\frac{\int_{X/B}(\rho_{H}\wedge \omega_0^m)}{\int_{X/B}\omega_0^m}.$$

From this one obtains a splitting $$C^{\infty}(X) \cong C^{\infty}_0(X) \oplus C^{\infty}(B),$$ where $C^{\infty}_0(X)$ denotes the functions which integrate to zero over the fibres (i.e. $\phi \in C^{\infty}_0(X)$ is equivalent to $\int_{X/B} \phi \omega^m = 0$).

\subsection{Twisted extremal metrics}\label{subsec:extremal}  Let $B$ be a K\"ahler manifold endowed with an ample line bundle $L$. We say that a K\"ahler metric $\omega\in c_1(L)$ is a \emph{constant scalar curvature K\"ahler (cscK) metric} if $$S(\omega) = \Lambda_{\omega} \Ric\omega = c,$$ where $$c=n\frac{-K_X.L^{n-1}}{L^n} = n\frac {\int_X \Ric\omega\wedge\omega^{n-1}}{\int_X \omega^n}$$ is the only possible topological constant and $n=\dim B$. We say further that $\omega$ is \emph{extremal} if $\bar \partial \nabla^{1,0}S(\omega)=0$. 

\begin{remark}\label{fibre-independence} If $\pi: X \to B$ is a holomorphic submersion and $H$ is a line bundle on $X$ with restriction $H_b = H|_{X_b}$ to a fibre $X_b$, then  the intersection number $$\frac{-K_{X_b}.H_b^{n-1}}{H_b^n}$$ is actually independent of $b$. There are various ways of seeing this, one of which is to note that $H_b^n$ and $-K_{X_b}.H_b^{n-1}$ are coefficients of the Hilbert polynomial $h(r) = \chi(X_b,H_b^{\otimes r})$ and to use that the Hilbert polynomial is constant in flat families. More directly, one could similarly note that intersection numbers are also constant in flat families. Another more differential-geometric proof is to note that, if $\omega \in c_1(H)$ is relatively K\"ahler, then $\int_{X_b} \omega_b^{n}$ is continuous as one varies $b \in B$, but since it is also an integer it must be independent of $b$. In particular, if $\omega_b$ is a cscK metric on $X_b$ for all $b \in B$, then the scalar curvature of $\omega_b$ is independent of $b$. \end{remark}

Suppose in addition that $\alpha$ is a semipositive $(1,1)$ form on $B$. The typical case we shall consider is when $\alpha$ is the pullback of a K\"ahler metric from a map $B\to M$. We say that $\omega \in c_1(H)$ is a \emph{twisted cscK metric} if $$S(\omega) - \Lambda_{\omega} \alpha = c,$$ where $c$ is the appropriate topological constant. In this form these metrics were first studied by Fine \cite{fine1}, though the study of the special case of twisted K\"ahler-Einstein metrics goes back at least to Yau. More generally we say that $\omega$ is \emph{twisted extremal} if $\bar \partial \nabla^{1,0}(S(\omega) - \Lambda_{\omega} \alpha) = 0$. Clearly a twisted cscK metric is extremal, and conversely we shall see that a twisted extremal metric is twisted cscK if $B$ admits no holomorphic vector fields.  

For a K\"ahler manifold manifold $(X,A)$ with an ample line bundle $A$, we define $\Aut(X,A)$ to be the automorphisms of $X$ which lift to $A$. Its Lie algebra $\mfh$ consists of holomorphic vector fields whose flows lift to $L$. By a well known result of LeBrun-Simanca \cite[Theorem 1]{lebrun-simanca}, these vector fields are precisely those which vanish somewhere, and in particular are independent the choice of ample line bundle $A$. Fixing a metric $\omega \in c_1(A)$ with induced Riemannian metric $g$, each element $\xi \in\mfh$ can be written in coordinates, and using the obvious notation, as $$\xi^j = g^{j\bar k} \partial_{\bar k} f,$$ for some $f\in C^{\infty}( X, \C)$, unique up to the addition of a constant, called a \emph{holomorphy potential} for $\xi$. We denote by $\overline\mfh$ the holomorphy potentials of the holomorphic vector fields in $\mfh$.

The isomorphism $T^{1,0}X \cong TX$ between the holomorphic and real tangent bundles gives a correspondence $$g^{j\bar k} \partial_{\bar k} f \to \frac{1}{2}(\nabla u + J \nabla v),$$ where $f = u+iv$ is the decomposition of $f$ into real and imaginary parts and $J$ is the complex structure. We also denote by $\mft\subset \mfh$ the vector fields in $\mfh$ which correspond to Killing vector fields under this isomorphism, which by above are vector fields with purely imaginary holomorphy potential.

Denote by \begin{align*}\mathcal{D}_{\omega}: C^{\infty}(X,\C) &\to \Gamma(T^{1,0}X \otimes  \Omega^{0,1}(X)), \\  \mathcal{D}_{\omega} &= \bar\partial \nabla^{1,0},\end{align*} and recall the \emph{Lichnerowicz operator} is defined as $\mathcal{D}^*_{\omega} \mathcal{D}_{\omega} $. We shall use two key facts regarding this operator: firstly, it is the linearisation of the scalar curvature if $\omega$ is a cscK metric; secondly, the kernel $\ker \mathcal{D}^*_{\omega} \mathcal{D}_{\omega} = \ker\mathcal{D}_{\omega}$ consists precisely of holomorphy potentials $f$, see \cite{calabi-2}. Denoting $\ker_0 \mathcal{D}^*_{\omega} \mathcal{D}_{\omega}$ the kernel restricted to functions which integrate to zero with respect to the volume form induced by $\omega$, it follows that this operator induces an isomorphism  \begin{align*}\ker_0 \mathcal{D}^*_{\omega} \mathcal{D}_{\omega} &\to \mfh, \\ f  &\to g^{j\bar k} \partial_{\bar k} f.\end{align*} Using this operator one sees that a K\"ahler metric is extremal precisely if its scalar curvature is a holomorphy potential, or equivalently $\mathcal{D}_{\omega}S(\omega) = 0$. Similarly a K\"ahler metric on $B$ is a twisted extremal metric if and only if $S(\omega) - \Lambda_{\omega}\alpha$ is a holomorphy potential, i.e. it lies in $\overline\mfb$ and so corresponds to an element of the Lie algebra  $\mfb$ of $\Aut(B,L)$.

The twisted extremal equation can therefore be written $$ S (\omega)  - \Lambda_{\omega} \alpha - f = 0 ,$$ where $f$ is the potential of some holomorphic vector field $\nu\in\mfb$ with respect to the K\"ahler metric $\omega$. If we consider a perturbation $\omega + i \partial \overline{\partial} \phi$ of $\omega$ within its K\"ahler class, then the potential for $\nu$ changes by $\nu ( \phi) =  \frac{1}{2} \langle \nabla f, \nabla \phi \rangle$, where the inner product and gradient is taken using $\omega$ (see for example \cite[Lemma 12]{szekelyhidi-extremal-blowups} or \cite{APS}). Writing the twisted extremal equation as an equation for $\phi$ gives $$ S (\omega + i \partial \overline{\partial} \phi ) -  \Lambda_{\omega+ i \partial \overline{\partial} \phi} \alpha -  \frac{1}{2} \langle \nabla f, \nabla \phi \rangle  - f  = 0 .$$ Thus to solve the \emph{extremal} equation, we must find a  zero of the map
\begin{align*}&P: C^{4,\alpha} (B) \times \overline{\mathfrak{b}} \rightarrow C^{0, \alpha} (B), \\  &P(\phi, f)= S (\omega + i \partial \overline{\partial} \phi ) - \frac{1}{2} \langle \nabla f, \nabla \phi \rangle  - f.
\end{align*} A standard elliptic regularity argument then implies a $C^{4,\alpha}$-extremal metric is actually smooth.

\section{Lifting vector fields}\label{sec:lifting}

\subsection{Linearising the twisted extremal operator}

One of the key links between cscK metrics and the geometry of K\"ahler manifolds is through the Lichnerowicz operator, whose kernel is precisely given by the holomorphy potentials of the K\"ahler manifold. The goal of this section is to prove an analogous result for twisted cscK and extremal metrics. We begin with the following calculation of the linearisation of the twisted extremal operator: 

\begin{proposition}\label{prop:linearisation-twisted} The linearisation of the twisted extremal operator $P$ is given as \begin{align}\label{twistedextremaloperator} dP_{(0,f)} ( \phi, h) = - \mathcal{D}^*_{\omega} \mathcal{D}_{\omega} (\phi) + \frac{1}{2} \langle \nabla \big( S(\omega) - f \big), \nabla \phi \rangle + \langle i \partial \overline{\partial} \phi, \alpha \rangle_{\omega} -h.\end{align} \end{proposition}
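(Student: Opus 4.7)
The plan is to compute $dP_{(0,f)}(\phi,h)$ by differentiating $P(\epsilon\phi,\,f+\epsilon h)$ in $\epsilon$ at $\epsilon=0$, treating the four terms of $P$ separately and then recombining. With $\omega_\epsilon = \omega + i\epsilon\partial\bar\partial\phi$, we linearise (i) the scalar curvature $S(\omega_\epsilon)$, (ii) the twist term $-\Lambda_{\omega_\epsilon}\alpha$ (which is present in the extremal equation displayed just before $P$, and is the piece that produces the $\langle i\partial\bar\partial\phi,\alpha\rangle_\omega$ contribution in the formula), (iii) the drift term $-\tfrac{1}{2}\langle\nabla(f+\epsilon h),\nabla(\epsilon\phi)\rangle_{\omega_\epsilon}$, and (iv) the term $-(f+\epsilon h)$.

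For (i) I would invoke the standard Calabi formula for the variation of scalar curvature at an \emph{arbitrary} K\"ahler metric, namely
\[
\frac{d}{d\epsilon}\Big|_{\epsilon=0} S(\omega_\epsilon) \;=\; -\mathcal{D}^*_\omega\mathcal{D}_\omega\phi \;+\; \tfrac{1}{2}\langle\nabla S(\omega),\nabla\phi\rangle_\omega,
\]
which requires no cscK assumption and is the identity underlying the definition of the Lichnerowicz operator. For (ii), I would compute directly: expanding $\omega_\epsilon^n$ and $\alpha\wedge\omega_\epsilon^{n-1}$ to first order in $\epsilon$ and using the pointwise identity
\[
\frac{n(n-1)\,\beta\wedge\gamma\wedge\omega^{n-2}}{\omega^n} \;=\; (\Lambda_\omega\beta)(\Lambda_\omega\gamma) - \langle\beta,\gamma\rangle_\omega
\]
for $(1,1)$-forms $\beta,\gamma$, applied with $\beta=\alpha$ and $\gamma=i\partial\bar\partial\phi$, the product-rule cross terms in the quotient $n\,\alpha\wedge\omega_\epsilon^{n-1}/\omega_\epsilon^n$ combine so that the $(\Lambda_\omega\alpha)(\Lambda_\omega i\partial\bar\partial\phi)$ pieces cancel, leaving
\[
\frac{d}{d\epsilon}\Big|_{\epsilon=0} \Lambda_{\omega_\epsilon}\alpha \;=\; -\langle i\partial\bar\partial\phi,\alpha\rangle_\omega.
\]
The minus sign in front of the twist term in $P$ then yields exactly the $+\langle i\partial\bar\partial\phi,\alpha\rangle_\omega$ appearing in \eqref{twistedextremaloperator}.

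For (iii), the key observation is that $-\tfrac{1}{2}\langle\nabla(f+\epsilon h),\nabla(\epsilon\phi)\rangle_{\omega_\epsilon}$ already carries a factor of $\epsilon$ through $\nabla(\epsilon\phi)$, so the variation of the metric and the $h$-variation of $f$ contribute only at order $\epsilon^2$; the leading contribution at $\epsilon=0$ is simply $-\tfrac{1}{2}\langle\nabla f,\nabla\phi\rangle_\omega$. Part (iv) is immediate, giving $-h$. Summing the four pieces and combining the two gradient terms via
\[
\tfrac{1}{2}\langle\nabla S(\omega),\nabla\phi\rangle_\omega - \tfrac{1}{2}\langle\nabla f,\nabla\phi\rangle_\omega \;=\; \tfrac{1}{2}\langle\nabla(S(\omega)-f),\nabla\phi\rangle_\omega
\]
produces the claimed formula. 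There is no real obstacle here: the only nontrivial inputs are the Calabi variation formula for $S$ and the algebraic identity used to compute $\delta\Lambda_\omega\alpha$, both of which are standard, and the main thing to check carefully is that the metric-dependence of the drift term contributes only at higher order because of the $\epsilon\phi$ factor.
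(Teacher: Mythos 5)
Your proposal is correct and follows essentially the same route as the paper: the paper quotes the known linearisation of the untwisted extremal operator $\mathcal{Q}$ wholesale (which packages your steps (i), (iii), (iv)), and then computes the variation of $\Lambda_{\omega_\phi}\alpha$ exactly as you do, via the identity $n(n-1)\beta_1\wedge\beta_2\wedge\omega^{n-2}=(\Lambda_\omega\beta_1\,\Lambda_\omega\beta_2-\langle\beta_1,\beta_2\rangle_\omega)\omega^n$ and the cancellation of the $\Lambda_\omega\alpha\cdot\Delta\phi$ terms. You also correctly read the displayed definition of $P$ as implicitly containing the twist term $-\Lambda_{\omega_\phi}\alpha$, which is what the stated linearisation requires.
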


\begin{proof}

Recall that the linearisation of the extremal operator $\mathcal{Q}: C^{4,\alpha} (B) \times \overline{\mathfrak{h}}\rightarrow C^{0, \alpha} (B)$ \begin{equation}\label{extremaloperator}\mathcal{Q} (\phi, f) = S (\omega + i \partial \overline{\partial} \phi ) -  \frac{1}{2} \langle \nabla f, \nabla \phi \rangle  - f \end{equation} is given by $$ d\mathcal{Q}_{(0,f)} ( \phi, h) = - \mathcal{D}^*_{\omega} \mathcal{D}_{\omega} (\phi) + \frac{1}{2} \langle \nabla \big( S(\omega) - f \big), \nabla \phi \rangle - h.$$

Thus all that remains is to subtract the linearisation of the operator $\phi \to \Lambda_{\omega_{\phi}} \alpha$, where $\omega_{\phi} = \omega + \ddb \phi$. Writing $$(\Lambda_{\omega_{\phi}} \alpha) \omega_{\phi}^n = n \alpha \wedge \omega_{\phi}^{n-1}$$and expanding one sees that the linearisation is given by $$\phi \to \frac{ n (n-1) i \partial \overline{\partial} \phi \wedge \alpha \wedge \omega^{n-2}  }{\omega^n} - \Lambda_{\omega} \alpha \cdot \Delta(\phi) .$$ We claim that $$ - \langle i \partial \overline{\partial} \phi, \alpha \rangle_{\omega} = \frac{ n (n-1) i \partial \overline{\partial} \phi \wedge \alpha \wedge \omega^{n-2}  }{\omega^n} - \Lambda_{\omega} \alpha \cdot \Delta(\phi) .$$ Using the formula $$ n (n-1) \beta_1 \wedge \beta_2 \wedge \omega^{n-2} = \big( \Lambda_{\omega} \beta_1 \Lambda_{\omega} \beta_2 - \langle \beta_1, \beta_2 \rangle_{\omega} \big) \omega^n $$ for $(1,1)$-forms $\beta_1, \beta_2$ \cite[Lemma 4.7]{szekelyhidi-book}, together with $\Delta (\phi) = \Lambda_{\omega} (i \partial \overline{\partial} \phi)$, gives
\begin{equation}\label{doublewedge} - \langle i \partial \overline{\partial} \phi, \alpha \rangle_{\omega}\omega^n = n (n-1) i \partial \overline{\partial} \phi \wedge \alpha \wedge \omega^{n-2}  
 - \Lambda_{\omega} \alpha \cdot \Delta (\phi)\omega^n,\end{equation}
as required.
\end{proof}

While the above results holds without any assumptions on $\alpha$, it will now be useful to assume $\alpha$ is semipositive. Recall from Section \ref{subsec:extremal} that $\D^*\D \phi = 0$ if and only if $\phi$ is a holomorphy potential, and so the kernel of the linearisation of the extremal operator \eqref{extremaloperator} at an extremal metric consists precisely of holomorphy potentials. The analogue of this statement for the twisted extremal operator is the following, which is the operator obtained by choosing $f=S(\omega) - \Lambda_{\omega} \alpha$ and restricting to $h=0$ in the operator \eqref{twistedextremaloperator}.

\begin{proposition}Define an operator $$\scL_{\alpha}(\phi)= - \mathcal{D}^*_{\omega} \mathcal{D}_{\omega} (\phi) + \frac{1}{2} \langle \nabla \Lambda_{\omega} \alpha,  \nabla \phi \rangle  +  \langle i \partial \overline{\partial} \phi, \alpha \rangle.$$ Suppose $\alpha$ is semipositive. Then the kernel of $\scL_{\alpha}$ consists of holomorphy potentials satisfying $| \nabla \phi |_{\alpha}  =0$. \end{proposition}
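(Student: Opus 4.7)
The plan is to establish the integration-by-parts identity
$$-\int_B \bar\phi\,\scL_\alpha(\phi)\,\omega^n \;=\; \int_B |\mathcal{D}\phi|^2\,\omega^n \;+\; \int_B \alpha\bigl(\nabla^{1,0}\phi,\overline{\nabla^{1,0}\phi}\bigr)\,\omega^n$$
for all smooth $\phi$. Both integrals on the right are non-negative: the first manifestly, the second because $\alpha$ is semipositive. Hence if $\scL_\alpha\phi=0$, each summand vanishes pointwise. The first forces $\mathcal{D}\phi=0$, which identifies $\phi$ as a holomorphy potential via the characterisation $\ker\mathcal{D}^*\mathcal{D}=\ker\mathcal{D}$ recalled in Section \ref{subsec:extremal}, while the second is literally $|\nabla\phi|_\alpha=0$. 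Conversely, if both conditions hold the right-hand side vanishes, and self-adjointness of $\scL_\alpha$ (inherited from its interpretation as a Hessian, explained below) promotes the vanishing of this Hermitian quadratic form to the vanishing of $\scL_\alpha\phi$ itself.

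To prove the identity, pair the three summands of $\scL_\alpha\phi$ against $\bar\phi$ and integrate. Self-adjointness of the Lichnerowicz operator produces $-\int|\mathcal{D}\phi|^2\omega^n$ from the first summand. For the two remaining summands I would first apply \eqref{doublewedge} to rewrite
$$\langle i\partial\bar\partial\phi,\alpha\rangle_\omega\,\omega^n \;=\; \Lambda_\omega\alpha\cdot\Delta\phi\,\omega^n \;-\; n(n-1)\,i\partial\bar\partial\phi\wedge\alpha\wedge\omega^{n-2},$$
then integrate $\bar\phi$ times the wedge term by parts, using that $\alpha$ and $\omega$ are closed and $B$ compact without boundary, to obtain $\int i\partial\phi\wedge\bar\partial\bar\phi\wedge\alpha\wedge\omega^{n-2}$. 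The $(1,1)$-form $i\partial\phi\wedge\bar\partial\bar\phi$ has trace $|\partial\phi|^2_\omega$ with respect to $\omega$ and Hermitian inner product with $\alpha$ equal to $\alpha(\nabla^{1,0}\phi,\overline{\nabla^{1,0}\phi})$, so \eqref{doublewedge} applied in reverse rewrites this wedge integral as $\int\alpha(\nabla^{1,0}\phi,\overline{\nabla^{1,0}\phi})\omega^n$ modulo a correction of the form $\int\Lambda_\omega\alpha\cdot|\partial\phi|^2_\omega\,\omega^n$. A final integration by parts on the $\int\bar\phi\,\Delta\phi\,\Lambda_\omega\alpha\,\omega^n$ contribution cancels this correction against the middle summand $\tfrac12\int\bar\phi\,\langle\nabla\Lambda_\omega\alpha,\nabla\phi\rangle\,\omega^n$ of $\scL_\alpha\phi$, completing the identity.

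Conceptually, the identity expresses that $\scL_\alpha$ is, up to sign, the Hessian of the twisted Mabuchi functional at a twisted cscK metric, whose associated quadratic form is the well-known $\int|\mathcal{D}\phi|^2 + \int\alpha(\nabla^{1,0}\phi,\overline{\nabla^{1,0}\phi})\,\omega^n$; indeed $\scL_\alpha$ is by construction the linearisation of the Euler--Lagrange operator $S(\omega)-\Lambda_\omega\alpha$. The main technical obstacle is the bookkeeping in the preceding paragraph: neither the middle nor the final summand of $\scL_\alpha$ is individually self-adjoint in $L^2(\omega^n)$, and the derivatives of $\alpha$ produced during integration by parts must be tracked carefully to see that they cancel between these two summands, leaving only the manifestly non-negative quadratic form on the right.
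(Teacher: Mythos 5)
Your proposal is correct and follows essentially the same route as the paper: the paper derives the polarised version of your identity (pairing $\scL_{\alpha}(\phi)$ against an arbitrary test function $\psi$ rather than against $\bar\phi$, yielding equation \eqref{eq:needs-conj}) via exactly the same integration by parts, with the derivative-of-$\alpha$ terms cancelling between the middle summand and the wedge term, and then reads off both directions from non-negativity of the two resulting terms. The only cosmetic difference is that the paper's polarised identity makes the converse immediate without invoking the Cauchy--Schwarz argument for a non-negative self-adjoint operator, but the content is identical.
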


\begin{proof}  First note that $\scL_{\alpha}(\phi)=0$ if and only if for all $\psi \in C^{\infty}(X)$  we have
$$0 =\int_B \scL_{\alpha} (\phi)\psi\omega^n = \int_B \big( - \mathcal{D}^*_{\omega} \mathcal{D}_{\omega} (\phi)  + \frac{1}{2} \langle \nabla \Lambda_{\omega} \alpha,  \nabla \phi \rangle + \langle i \partial \overline{\partial} \phi, \alpha \rangle \big)\psi  \omega^n.$$ The Lichnerowicz operator $\mathcal{D}^*_{\omega} \mathcal{D}_{\omega} $ is self-adjoint and hence satisfies  $$\int_B \mathcal{D}^*_{\omega} \mathcal{D}_{\omega} (\phi)\psi  \omega^n = \int_B  \langle \mathcal{D}_{\omega} (\phi), \mathcal{D}_{\omega} (\psi) \rangle  \omega^n.$$ Using equation \eqref{doublewedge}, the remaining part of the integral equals 
\begin{align}\label{keystep}\int_B (\scL_{\alpha} (\phi) + \mathcal{D}^*_{\omega} \mathcal{D}_{\omega} (\phi))\psi\omega^n = \int_B \frac{1}{2}& \langle \nabla \Lambda_{\omega} \alpha, \psi \nabla \phi \rangle \omega^n + \int_B \psi \Delta (\phi) \Lambda_{\omega} \alpha \omega^n-\\&- n (n-1)\int_B  \psi i \partial \overline{\partial} \phi \wedge  \alpha  \wedge \omega^{n-2}.\nonumber \end{align}

Applying the Leibniz rule for $\nabla^*$, we see that
\begin{align*} \int_B  \langle \nabla \Lambda_{\omega} \alpha, \psi \nabla \phi \rangle \omega^n &= \int_B    \Lambda_{\omega} \alpha \cdot \nabla^* ( \psi \nabla \phi )  \omega^n\\
&= \int_B \Lambda_{\omega} \alpha \cdot ( \psi \nabla^* \nabla \phi - \nabla \psi \cdot \nabla \phi ) \omega^n \\
&= - 2 \int_B  \psi  \Delta( \phi) \Lambda_{\omega} \alpha \omega^n - \int_B \nabla \psi \cdot \nabla \phi \Lambda_{\omega} \alpha \omega^n
\end{align*}
since $\nabla^* \nabla = - 2 \Delta$ on functions.  Thus in equation \eqref{keystep}, the two first terms equal $-\frac{1}{2}\int_B \nabla \psi \cdot \nabla \phi \Lambda_{\omega} \alpha \omega^n.$ 

For the remaining term, we begin with $$ n (n-1)\int_B \psi i \partial \overline{\partial} \phi \wedge  \alpha  \wedge \omega^{n-2} = -n (n-1) \int_B i \partial \psi \wedge \overline{\partial} \phi \wedge \alpha \wedge \omega^{n-2}.$$  As above using \cite[Lemma 4.7]{szekelyhidi-book} gives $$n(n-1)  i\partial \psi \wedge \overline{\partial} \phi \wedge \alpha \wedge \omega^{n-2} = \left( \Lambda_{\omega} (i\partial \psi \wedge \overline{\partial} \phi) \cdot \Lambda_{\omega} \alpha - \langle i \partial \psi \wedge \overline{\partial} \phi , \alpha \rangle_{\omega} \right) \omega^n .$$ For the first term, note that $$ \Lambda_{\omega} ( i\partial \psi \wedge \overline{\partial} \phi ) = g^{p \overline{q}} \partial_p \psi \partial_{\overline{q}} \phi = \frac{1}{2}\langle \nabla \psi,\nabla \phi\rangle_{\omega}.$$ Similarly for the second term $$ \langle i\partial \psi \wedge \overline{\partial} \phi , \alpha \rangle_{\omega} = g^{p \overline{q}} g^{r \overline{s}} \partial_p \psi \partial_{\overline{s}} \phi \alpha_{r \overline{q}} = (\nabla \phi)^{r}(\nabla \psi)^{\overline{q}} \alpha_{r \overline{q}} = \langle \nabla \psi, \nabla \phi \rangle_{\alpha}.$$ So $$n(n-1)  i\partial \psi \wedge \overline{\partial} \phi \wedge \alpha \wedge \omega^{n-2} = \left( \frac{1}{2}\langle \nabla \psi,\nabla \phi\rangle_{\omega}\cdot \Lambda_{\omega} \alpha -\langle \nabla \psi, \nabla \phi \rangle_{\alpha} \right) \omega^n .$$ It follows that $$- n (n-1)\int_B  \psi i \partial \overline{\partial} \phi \wedge  \alpha  \wedge \omega^{n-2} = \int_B \left( \frac{1}{2}\langle \nabla \psi,\nabla \phi\rangle_{\omega}\cdot \Lambda_{\omega} \alpha -\langle \nabla \psi, \nabla \phi \rangle_{\alpha} \right) \omega^n,$$ and summing up gives \begin{equation}\label{eq:needs-conj}\int_B \scL_{\alpha} (\phi)\psi\omega^n= - \int_B  \langle \mathcal{D}_{\omega} (\phi) , \mathcal{D}_{\omega} (\psi)\rangle \omega^n - \int_B \langle \nabla \psi, \nabla \phi \rangle_{\alpha}\omega^n. \end{equation}

Setting $\psi=\phi$, we see that $\int_B \psi\scL_{\alpha} (\phi)\omega^n=0$ implies $\int_B  | \mathcal{D}_{\omega} (\phi)|^2 \omega^n=0$ and $\int_B |\nabla \phi|^2_{\alpha}\omega^n=0$. These respectively imply $\mathcal{D}_{\omega} (\phi)=0$ and $|\nabla \phi|_{\alpha}=0$; remark that the first is equivalent to $\phi$ being a holomorphy potential.

Conversely suppose $\mathcal{D}_{\omega} (\phi)=0$ and $|\nabla \phi|_{\alpha}=0$. Since $\alpha$ is semipositive, one has $| \nabla \phi |_{\alpha}=0$ if and only if $\langle \nabla \phi,\nabla \psi\rangle_{\alpha} =0$ for all $\psi \in C^{\infty}(X)$. But this clearly implies $\int_B \psi\scL_{\alpha} (\phi)\omega^n=0$ for all $\psi$, proving the result.
\end{proof}

\begin{remark}
It seems unlikely that there is any reasonable interpretation for the kernel when $\alpha$ is not semipositive.
\end{remark}

\subsection{Automorphisms of maps}

As described in the introduction, the most natural situation in which to study twisted extremal metrics is when one has a map of K\"ahler manifolds $p: (B,\omega) \to (M,\alpha)$. Specialising to this case, we relate the relevant kernel to automorphisms of $p$.

\begin{definition} We say that $g \in \Aut(B,L)$ is an automorphism of $p: B\to M$ if $p(g(b)) = p(b)$ for all $b \in B$. We write the subgroup of automorphisms of $p$ as $\Aut(p)$, and denote its Lie algebra  by $\mfp$. 
 \end{definition}
 
\begin{proposition} \label{automs-prop}
Suppose $\rho(t)\in \Aut(B)$ is the flow of a holomorphic vector field $\mu \in  \mfb$. Then $\rho(t) \in \Aut(p)$ if and only if the holomorphy potential $\phi$ of $v$ is in the kernel of $\scL_{\alpha}$. The same result holds if $\alpha$ is a smooth $(1,1)$-form which is K\"ahler on a Zariski open locus of $M$.
\end{proposition}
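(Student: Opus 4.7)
The plan is to reduce everything to the previous proposition, which characterises $\ker \scL_\alpha$ as consisting of holomorphy potentials $\phi$ satisfying $|\nabla\phi|_\alpha = 0$. Since we are already given $\mu \in \mfb$, its potential $\phi$ is automatically a holomorphy potential, so the task is purely to identify the vanishing condition $|\nabla\phi|_\alpha = 0$ with the property that the flow $\rho(t)$ preserves the fibres of $p$.

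First I would write $\alpha = p^*\alpha_M$ for $\alpha_M$ the K\"ahler form on $M$, and unravel the pointwise identity
\[
|\nabla\phi|^2_\alpha = \alpha\bigl(\nabla^{1,0}\phi, \overline{\nabla^{1,0}\phi}\bigr) = (p^*\alpha_M)(\mu,\bar\mu) = \alpha_M\bigl(p_*\mu,\overline{p_*\mu}\bigr),
\]
using that $\nabla^{1,0}\phi = \mu$ by definition of the holomorphy potential. Since $\alpha_M$ is positive definite on $M$, this function vanishes identically on $B$ if and only if $p_*\mu \equiv 0$. Next, I would observe that $p_*\mu \equiv 0$ is equivalent to $\mu$ being tangent to the fibres of $p$, which by integrating the flow of $\mu$ gives $\tfrac{d}{dt} p(\rho(t)(b)) = (dp)(\mu)|_{\rho(t)(b)} = 0$, i.e.\ $p\circ \rho(t) = p$, so that $\rho(t) \in \Aut(p)$. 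Conversely, differentiating $p \circ \rho(t) = p$ at $t=0$ yields $p_*\mu = 0$. This handles the case when $\alpha$ is globally K\"ahler.

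For the more general case when $\alpha_M$ is only K\"ahler on a Zariski open subset $U \subset M$, essentially the same argument goes through with minor adjustments. If $\rho(t) \in \Aut(p)$, then $p_*\mu \equiv 0$, and therefore the pointwise identity above forces $|\nabla\phi|_\alpha = 0$ on all of $B$. Conversely, if $|\nabla\phi|_\alpha = 0$, positive definiteness of $\alpha_M$ over $U$ gives $p_*\mu = 0$ on the Zariski open set $p^{-1}(U) \subset B$; since $p_*\mu$ is a holomorphic section of $p^*TM$ and $p^{-1}(U)$ is dense, $p_*\mu$ vanishes identically, and the flow argument above again gives $\rho(t) \in \Aut(p)$.

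I do not expect a serious obstacle: the key content is just the pointwise identification $|\nabla\phi|_\alpha^2 = \alpha_M(p_*\mu, \overline{p_*\mu})$ together with the fact that a holomorphic vector field with vanishing pushforward integrates to a flow preserving fibres. The only mildly delicate point is the Zariski open case, where one must use holomorphicity of $p_*\mu$ (rather than mere continuity) to extend the vanishing from $p^{-1}(U)$ to all of $B$.
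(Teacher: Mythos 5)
Your proof is correct, and while it lands on the same key equivalence as the paper --- namely that, once $\mathcal{D}_{\omega}\phi=0$ is given, membership of $\phi$ in $\ker\scL_{\alpha}$ reduces to $|\nabla\phi|_{\alpha}=0$, which is exactly the condition that $\mu$ be tangent to the fibres of $p$ --- your route to that equivalence is genuinely different and, to my mind, cleaner. The paper replaces $M$ by the image $p(B)$, passes through the Stein factorisation to reduce to a contraction, restricts to the Zariski open locus where $p$ is a submersion, and only then verifies the equivalence in local coordinates in which $p$ is a projection. Your coordinate-free identity $|\nabla\phi|^2_{\alpha}=\alpha_M\bigl(p_*\mu,\overline{p_*\mu}\bigr)$ holds pointwise on all of $B$ with no submersion or surjectivity hypothesis, so it bypasses those reductions entirely; and in the degenerate case you correctly appeal to holomorphicity of $p_*\mu$ as a section of $p^*TM$ to propagate the vanishing from the dense open set $p^{-1}(U)$ to all of $B$ (mere continuity of $|\nabla\phi|_{\alpha}$ would not suffice there, since $\alpha_M$ is degenerate off $U$). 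The one implicit assumption in the degenerate case --- that the K\"ahler locus $U$ actually meets $p(B)$, so that $p^{-1}(U)$ is nonempty --- is shared with the paper's own argument and is satisfied in all the applications, so I would not count it against you; it is worth a sentence of acknowledgement if you write this up.
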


\begin{proof} A holomorphy potential is, in particular, a smooth function. Thus a holomorphy potential $\phi$ is in the kernel of $\scL_{\alpha}$ if and only if $\scL_{\alpha}(\phi) = 0$ holds \emph{on a Zariski open locus of $B$}. Similarly for a given $g\in \Aut(B)$, by definition $g \in \Aut(p)$ if $p(g(b)) = p(b)$ for all $b \in B$, which is true if and only if $p(g(b)) = p(b)$ for all $b$ in a Zariski open subset of $B$, since $g$ is a holomorphic map, and Zariski open sets are dense in the analytic topology.

Thus to prove the result we can work on a Zariski open subsets of $B$ and similarly $M$. By replacing $M$ with its image we assume $p$ is surjective. The Stein factorisation of the map $p: B\to M$ is $ B\overset{p_1}{\to}M' \overset{p_2}{\to} M$ where $p_1$ is a contraction and $p_2$ is finite. Note that the automorphisms of $p$ are equivalent to the isomorphisms of $p_1$. As $p_2$ is finite, it is unramified in codimension one, hence the pullback $p_2^*\alpha$ is positive on a Zariski open locus of $M'$. Thus we may assume $p$ is actually a contraction.

We next work on the Zariski open locus of $B$ on which $p: B \to M$ is a holomorphic submersion, which exists by \cite[p106]{GPR}. In fact working first on the Zariski open locus of $M$ which is smooth, it is then clear $p$ is a submersion on a Zariski open locus as the condition is simply that the Jacobian of the derivative of the map has its largest rank. Let $B^o \subset B$ be such a Zariski open submanifold.

As $p: B^o \to M$ is a submersion, locally around any point there are holomorphic coordinates such that $p$ is a projection onto some of the subset of the coordinates. In a neighbourhood of $b \in B^o$, pick coordinates $x_1,\hdots,x_i,x_{i+1},\hdots,x_n$ such that $p(x_1,\hdots,x_i,x_{i+1},\hdots,x_n) = (x_{i+1},\hdots,x_n)$. Since $\alpha$ is pulled back from $M$, $\alpha$ is strictly positive in the coordinates $(x_{i+1},\hdots,x_n)$ and zero in the other directions. Thus $| \nabla \phi |_{\alpha}  =0$ if and only if the flow of $\phi$ acts only on $(x_1,\hdots,x_i)$. But this happens for all $b \in B^o$ if and only if $\rho(t) \in \Aut(p)$, as required. \end{proof}

\begin{remark}\label{general-lift} It should come as no surprise that one only needs $\alpha$ to be positive on a Zariski open locus of $M$ in the above result. The typical situation in which this occurs is when $q: M \to M'$ is a birational morphism, so that $q^*\alpha'$ is positive away from the exceptional locus of $q$. But in this situation, the automorphisms of a map $p: B\to M$ are equal to the automorphisms of $p \circ q$, justifying the expectation. 

 Slightly more generally, Proposition \ref{automs-prop} holds in the situation that one has a map $p: M^o \to B^o$ where $M^o \subset M$ and $B^o \subset B$ are Zariski open loci of $M$ and $B$ respectively, with $\alpha$ a K\"ahler metric on $B^o$ which extends to a continuous metric on $M$.\end{remark}

\begin{remark}\label{rmk:uniqueness} Using these ideas, one can give a geometric interpretation of the uniqueness of twisted cscK and extremal metrics. It is proven by Berman-Berndtsson that an extremal metric is unique up to isometries \cite[Section 4]{berman-berndtsson}. In Section \ref{sec:invariance} we shall see that a twisted extremal metric automatically has isometry group which is a maximal compact subgroup of $\Aut(p)$. Thus it is natural to ask if the same uniqueness holds for twisted extremal metrics. Absolute uniqueness is proved by Berman-Berndtsson in the case that $\alpha$ is positive, which is the case that $p$ is an embedding.

In fact, the techniques of Berman-Berndtsson apply more generally. We consider first the (twisted) cscK case. The key tool used in \cite{berman-berndtsson} is the Mabuchi functional, which is a functional on the space of K\"ahler metrics $\omega + \ddb\phi $, and has a natural twisted analogue as follows. Consider a path $\omega_t=\omega+\ddb \phi_t$ with $\omega_0 = \omega$. The twisted Mabuchi functional is then $$\M_{\alpha}(\phi) = -\int_0^1\int_B \dot \phi_t (S(\omega_t) - \Lambda_{\omega_t}\alpha - C)\omega_t^n.$$ Berman-Berndtsson show that, in a suitable sense, this functional is convex along geodesics in the space of weak K\"ahler metrics with $L^{\infty}$-coefficients (this is proven when $\alpha$ is strictly positive, but the same result holds by approximating $\alpha$ by positive forms, or by an examination of the proof in the positive case). One calculates, using the same ideas as Proposition \ref{prop:linearisation-twisted}, that the Hessian $H(\phi,\psi)$ of the twisted Mabuchi functional is given as: $$H(\phi,\psi) =  \int_B  \mathcal{D}_{\omega} (\psi) \mathcal{D}_{\omega} (\phi) \omega^n + \int_B \langle \nabla \psi, \nabla \phi \rangle_{\alpha}\omega^n.$$ Thus from Proposition \ref{automs-prop} one sees that the Hessian of the Mabuchi functional is non-negative, and degenerates precisely along the holomorphy potentials $\mfp$ which preserve the map $p$. This infinitesimal non-degeneracy, together with convexity along geodesics, is exactly what Berman-Berndtsson use to obtain uniqueness of cscK metrics up to automorphisms. Then the same argument applies to give uniqueness of twisted cscK metrics in the following sense: if $\omega,\omega'$ are two twisted cscK metrics, then there is an element $g \in \Aut(p)$ such that $\omega =g^*\omega'$. 

The argument for uniqueness of twisted extremal metrics is similar, the main difference is one works only on K\"ahler metrics invariant under the imaginary part of the extremal vector field, and modifies the Mabuchi functional by adding an extra term. We refer to \cite[Section 4]{berman-berndtsson} for further details in the untwisted case, which apply in a straightforward manner in the twisted setting.  

Using this allows one to give a geometric interpretation to Fine's uniqueness assumption \cite{fine1}. Indeed, we shall see in Remark \ref{rmk:second-uniqueness} that $\omega_B$ is the \emph{unique} twisted cscK metric if and only if $\Aut(X,kL+H)$ is discrete. 
\end{remark}

\begin{corollary} If $\omega, \omega' \in c_1(L)$ are both twisted cscK, then there is a $g \in \Aut(p)$ with $g^*\omega = \omega'$. If $\omega, \omega'$ are twisted extremal metrics with the same extremal vector field $v$, then denoting by $\Aut_v(p)$ the automorphisms of $p$ which commute with the flow of $v$, there is a $g \in \Aut_v(p)$ with $g^*\omega = \omega'$. \end{corollary}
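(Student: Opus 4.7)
The plan is to follow the strategy outlined in Remark \ref{rmk:uniqueness}, adapting the Berman--Berndtsson approach \cite{berman-berndtsson} from the cscK/strictly-positive-twist setting to our semipositive twisted setting. The key point is that Proposition \ref{automs-prop} already identifies the infinitesimal degeneracies of the natural energy functional, so once one has convexity along weak geodesics the argument becomes formal.

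First I would introduce the twisted Mabuchi functional
\begin{equation*}
\M_{\alpha}(\phi) = -\int_0^1\int_B \dot\phi_t\bigl(S(\omega_t) - \Lambda_{\omega_t}\alpha - C\bigr)\omega_t^n\, dt,
\end{equation*}
defined on the space of K\"ahler potentials, where $\omega_t = \omega + \ddb\phi_t$ and $C$ is the relevant topological constant. Its critical points are precisely twisted cscK metrics. A direct calculation, along the lines of Proposition \ref{prop:linearisation-twisted}, shows that its Hessian at a smooth point is
\begin{equation*}
H(\phi,\psi) = \int_B \langle \mathcal{D}_\omega\phi, \mathcal{D}_\omega\psi\rangle\,\omega^n + \int_B \langle \nabla\phi,\nabla\psi\rangle_{\alpha}\,\omega^n,
\end{equation*}
which is exactly the (negative of the) bilinear form appearing in \eqref{eq:needs-conj}. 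By Proposition \ref{automs-prop} this Hessian is non-negative and degenerates exactly along potentials of vector fields in $\mfp$.

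Next I would establish convexity of $\M_\alpha$ along weak $C^{1,\bar 1}$-geodesics. In the case where $\alpha$ is strictly positive, this is proved in \cite[Section 4]{berman-berndtsson}. For $\alpha$ merely semipositive, I would approximate $\alpha$ by $\alpha + \epsilon\omega_M$ for a fixed K\"ahler metric $\omega_M$ (or alternatively pass to the relevant Stein factorisation as in the proof of Proposition \ref{automs-prop}) and take the limit $\epsilon\to 0$; the convexity inequality is preserved in the limit. The main obstacle I anticipate is this step: one has to verify that the subharmonicity arguments Berman--Berndtsson use at the level of positivity of direct images go through (or can be approximated) when $\alpha$ is only semipositive, and that the relevant energies are lower semicontinuous in the required sense. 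Once this is in place, the conclusion of the first statement follows by the standard Berman--Berndtsson argument: given two twisted cscK metrics $\omega$ and $\omega'$, the functional $\M_\alpha$ is convex along the weak geodesic connecting them and attains its minimum at both endpoints, hence is affine along the geodesic; the infinitesimal description of the kernel of the Hessian together with regularity of geodesics connecting smooth minimisers then forces the geodesic to be generated by a one-parameter subgroup of $\Aut(p)$, producing the desired $g$.

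For twisted extremal metrics with common extremal vector field $v$, I would follow the standard modification, restricting the analysis to the subspace of K\"ahler potentials invariant under the flow of the imaginary part of $v$ and replacing $\M_\alpha$ by the modified functional $\M_\alpha(\phi) + \int_0^1\int_B \dot\phi_t\,\theta_{v,\omega_t}\,\omega_t^n\, dt$, where $\theta_{v,\omega_t}$ is the holomorphy potential for $v$ with respect to $\omega_t$, normalised in the standard way. Its critical points on this $v$-invariant subspace are twisted extremal metrics with extremal vector field $v$, and the Hessian computation together with Proposition \ref{automs-prop} shows that its degeneracies are exactly the potentials of vector fields in the centraliser of $v$ inside $\mfp$, i.e.\ the Lie algebra of $\Aut_v(p)$. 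The same convexity-plus-infinitesimal-non-degeneracy argument as before then yields an element $g \in \Aut_v(p)$ with $g^*\omega = \omega'$, completing the proof.
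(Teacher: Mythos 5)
Your proposal is correct and follows essentially the same route as the paper: the corollary is deduced exactly as sketched in Remark \ref{rmk:uniqueness}, namely by computing the Hessian of the twisted Mabuchi functional, identifying its degeneracies with $\mfp$ via Proposition \ref{automs-prop}, invoking Berman--Berndtsson convexity along weak geodesics (extended to semipositive $\alpha$ by approximation by positive forms), and handling the extremal case by restricting to potentials invariant under the imaginary part of $v$ and modifying the functional. The only point worth noting is that the paper treats the semipositive convexity step with the same brevity you do, so your flagging of it as the main obstacle is an honest reflection of where the remaining work lies rather than a gap relative to the paper.
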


Since, as we will later show in  Corollary \ref{thm:isometries} , twisted extremal metrics are invariant under a maximal compact subgroup of $\Aut_v(p)$, it follows that one could take a $g\in  \Aut_v(p)$ to be in the connected component of the identity.

\subsection{The Weil-Petersson metric}

We consider to a fibration $X\to B$ such that $H$ is a relatively ample line bundle with $\omega_0 \in c_1(H)$, such that each fibre $(X_b,H_b)$ is a smooth projective manifold with discrete automorphism group and the restriction $\omega_b$ of  $\omega_0$ to each fibre is cscK with scalar curvature $S(\omega_b)$ (which is independent of $b$). For the moment we allow $B$ to be non-compact. As in Section \ref{subsec:fibrations}, the metric $\omega_0$ form $\rho \in c_1(K_{X/B})$ in the relative anticanonical class. 

\begin{definition}\cite[Theorem 7.8]{fujiki-schumacher} Denote by $[\omega_b]^m = \int_{X_b} \omega_b^m$ the volume of the fibres. We define the \emph{Weil-Petersson metric} of $X\to B$ to be the $(1,1)$-form on $B$ given as $$\omega_{WP} = \frac{S(\omega_b)}{(m+1)}\frac{ \int_{X/ B} \omega^{m+1} }{\int_{X/B}\omega^m}-\frac{ \int_{X/B} \rho\wedge \omega^m}{\int_{X/B}\omega^m},$$ where $m$ is the dimension of the fibres, and the fibre integrals are simply the pushforwards of Section \ref{sec:prelims}. Note that $\int_{X/B}\omega^m$ is independent of $b\in B$ and is simply the volume of any fibre.
\end{definition}

\begin{remark}\label{submersion-of-spaces} Similarly one can define the Weil-Petersson metric over a submersion $X\to B$ where $X$ and $B$ are reduced complex spaces; we refer to \cite{fujiki-schumacher} for further details. \end{remark}

Remark that $\omega_{WP}$ is closed since the differential $d$ commutes with pushforward, see e.g. \cite{fujiki-schumacher}. Since the pushforward is of an $(m+1,m+1)$-form and each fibre is $m$-dimensional, the resulting form on $B$ is a $(1,1)$-form. The above is not the main representation of the Weil-Petersson metric used by Fujiki-Schumacher, but is equivalent by their results \cite{fujiki-schumacher}.

\begin{lemma}\label{WP-alpha} The form $\alpha$ is the Weil-Petersson metric induced from the fibration $X\to B$. \end{lemma}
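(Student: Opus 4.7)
The goal is to match $\alpha = -\int_{X/B}\rho_H \wedge \omega_0^m / \int_{X/B}\omega_0^m$ with $\omega_{WP}$. Comparing the two expressions, it suffices to prove
\begin{equation*}
\int_{X/B} (\rho - \rho_H) \wedge \omega_0^m = \frac{S(\omega_b)}{m+1}\int_{X/B}\omega_0^{m+1}.
\end{equation*}
My plan is a direct bidegree count along the vertical/horizontal splitting. Since $\scH$ is by construction the $\omega_0$-orthogonal complement of $\scV$, the form $\omega_0$ has no mixed $\scV$--$\scH$ component; write $\omega_0 = \omega_V + \omega_H$. On the other hand $\rho$ is allowed a mixed part, so decompose $\rho = \rho_V + \rho_M + \rho_H$ where $\rho_M$ denotes the mixed piece.

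Expanding $\omega_0^m$ and $\omega_0^{m+1}$ via the binomial theorem, only those terms of bidegree $(m,m)$ along $\scV$ and $(1,1)$ along $\scH$ survive the fibre integration to a $(1,1)$-form on $B$. A direct index check shows that $\rho_M \wedge \omega_V^k \wedge \omega_H^{m-k}$ is always unbalanced (the vertical and horizontal $(1,0)$- and $(0,1)$-counts cannot simultaneously hit $(m,m)$ and $(1,1)$), so the mixed contribution vanishes after pushforward. The $\rho_H$ contribution comes only from $k=m$, giving $\int_{X/B}\rho_H \wedge \omega_0^m = \int_{X/B}\rho_H \wedge \omega_V^m$, while the $\rho_V$ contribution comes only from $k=m-1$ with binomial coefficient $\binom{m}{m-1}=m$, yielding
\begin{equation*}
\int_{X/B}\rho\wedge \omega_0^m = \int_{X/B}\rho_H \wedge \omega_0^m + m\int_{X/B}\rho_V \wedge \omega_V^{m-1}\wedge \omega_H.
\end{equation*}

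The cscK hypothesis now enters. Since $\rho$ is the curvature of the hermitian metric induced by $\omega_0$ on $\det \scV = -K_{X/B}$, the restriction $\rho_V|_{X_b}$ is exactly the fibrewise Ricci form $\Ric(\omega_b)$. As $\omega_b$ has constant scalar curvature $S(\omega_b)$, the pointwise identity $\Ric(\omega_b)\wedge \omega_b^{m-1} = (S(\omega_b)/m)\,\omega_b^m$ holds on each fibre. Multiplying by the fibre function $\omega_H(\widetilde{v}, \widetilde{w})$ for any pair of horizontally lifted vectors $v, w \in T_b B$ and integrating over $X_b$ gives
\begin{equation*}
m\int_{X/B}\rho_V \wedge \omega_V^{m-1} \wedge \omega_H = S(\omega_b)\int_{X/B}\omega_V^m \wedge \omega_H.
\end{equation*}
The same bidegree argument applied to $\omega_0^{m+1}$ yields $\int_{X/B}\omega_0^{m+1} = (m+1)\int_{X/B}\omega_V^m \wedge \omega_H$, and combining the last two displays produces the target identity, so $\alpha = \omega_{WP}$.

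The only delicate point is the vanishing of the mixed contribution after pushforward; beyond that, the argument is a careful accounting of which terms in the binomial expansion contribute to a $(1,1)$-form on $B$, together with the single fibrewise input from the cscK equation.
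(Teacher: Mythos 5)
Your proof is correct and follows essentially the same route as the paper's (which recalls Fine's argument): split $\rho$ into vertical and horizontal parts, use the fibrewise cscK identity $m\rho_V\wedge\omega_b^{m-1}=S(\omega_b)\omega_b^m$, and compare fibre integrals. Your explicit bidegree bookkeeping for the mixed component and the binomial expansion is a welcome clarification of a step the paper leaves implicit, but it is not a different method.
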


\begin{proof} This is essentially proved in \cite[Lemma 2.3]{fine2}; we recall Fine's argument for the reader's convenience. 

Recall that $\alpha$ is defined by first taking the horizontal component $\rho_H$ of $\rho$ and defining $$\alpha = -\frac{\int_{X/B}(\rho_{H}\wedge \omega^m)}{\int_{X/B}\omega^m}.$$ On each fibre, the vertical component $\rho_V$ of $\rho$ is the Ricci curvature of $\omega_b$  \cite[Lemma 3.3]{fine1}. Since $\omega_b$ is cscK we therefore have $$m\rho_V \wedge \omega_b^{m-1} = S(\omega_b) \omega_b^m.$$ From here the result is a simple computation. Indeed \begin{align*}\int_{X/B} \rho_V\wedge \omega^m &= \int_{X/B} m\rho_V \wedge \omega_b^{m-1}\wedge \omega_H, \\
&= \int_{X/B} \omega_b^m \wedge \omega_H, \\ &= \frac{S(\omega_b)}{m+1}\int_{X/B} \omega^{m+1}. \end{align*} But then \begin{align*}\int_{X/B} (\rho \wedge \omega^m) &= \int_{X/B} (\rho_V + \rho_H)\wedge \omega^m, \\ &= -\left(\int_{X/B}\omega^m\right)\alpha +\frac{S(\omega_b)}{m+1}\int_{X/B} \omega^{m+1},\end{align*}giving the result. \end{proof}

\begin{remark}When the fibres $X_b$ are curves, Lemma \ref{WP-alpha} was noticed by Fine \cite[Theorem 3.5]{fine1}. In general Fine remarks in \cite{fine2} that $[\alpha] = [\omega_{WP}]$, what we wish to point out here is that even the forms themselves are equal. \end{remark}

The first result of Fujiki-Schumacher regarding the Weil-Petersson metric we shall use is the following. 

\begin{theorem}\cite[Theorem 7.4]{fujiki-schumacher} The Weil-Petersson metric is semi-positive. If the fibres of $X\to B$ are pairwise biholomorphically distinct, then $\omega_{WP}$ is K\"ahler. \end{theorem}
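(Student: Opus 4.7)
The plan is to compute $\omega_{WP}$ pointwise in terms of Kodaira-Spencer data and recognise it as an $L^2$-norm of harmonic representatives on each fibre; from this the two assertions follow directly.

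First I would fix $b\in B$ and, using the fact that $\pi$ is a holomorphic submersion, trivialise $X$ smoothly over a polydisc around $b$ as $X_b\times U$. Under this identification the complex structures on the fibres vary holomorphically in $U$, and for $v\in T_bB$ the derivative of this variation produces a $\bar\partial$-closed form $A(v)\in A^{0,1}(X_b,T^{1,0}X_b)$ whose class is the Kodaira-Spencer class $\rho(v)\in H^1(X_b,TX_b)$. The natural concrete representative is built from the horizontal distribution $\scH$ determined by $\omega_0$: one lifts $v$ to the horizontal vector field, and $A(v)$ is the $(0,1)$-part of the Lie derivative of the complex structure in that direction.

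Second, I would plug $v,\bar v$ into the defining expression for $\omega_{WP}$ and perform a fibrewise computation. Writing $\omega_0=\omega_b+\omega_H$ with respect to the splitting and expanding $\omega^{m+1}$ and $\rho\wedge\omega^m$ pointwise on each fibre in the same manner as Lemma \ref{WP-alpha}, the vertical part of $\rho$ contributes, via $m\rho_V\wedge\omega_b^{m-1}=S(\omega_b)\omega_b^m$, an amount that exactly cancels the scalar-curvature term in the definition of $\omega_{WP}$; what survives from the horizontal component is precisely the pointwise squared norm of $A(v)$ with respect to $\omega_b$. Integrating over $X_b$ yields the identity
\begin{equation*}
\omega_{WP}(v,\bar v)\big|_b \;=\; \frac{1}{\int_{X_b}\omega_b^m}\,\|A(v)\|^2_{L^2(X_b,\omega_b)}.
\end{equation*}
This is the content of \cite[Theorem 7.8]{fujiki-schumacher}; the cscK assumption on the fibres is used precisely to ensure that the cancellation is clean and that $A(v)$ can be chosen so that the above expression really is its $L^2$-norm (up to replacing $A(v)$ by its $\omega_b$-harmonic representative, which does not affect the $L^2$-norm since $A(v)$ is already cohomologous to the harmonic one).

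Third, semi-positivity is now immediate from the $L^2$ expression. For the K\"ahler statement, suppose the fibres of $X\to B$ are pairwise biholomorphically distinct and suppose $\omega_{WP}(v,\bar v)|_b=0$ for some $v\neq 0$. Then $A(v)$ is cohomologous to zero in $H^1(X_b,TX_b)$, so $\rho(v)=0$. Using that each fibre has discrete automorphism group, the moduli space of cscK fibres is a well-behaved complex space (as recalled in Section~\ref{sec:lifting}) with Kuranishi-type neighbourhoods in which the tangent space at $[X_b]$ is identified with $H^1(X_b,TX_b)$; vanishing of $\rho(v)$ then forces the moduli map $B\to\M$ to be non-immersive in direction $v$, and together with injectivity of the moduli map (which is the hypothesis on the fibres) this produces in a standard way a curve in $B$ along which the fibres are mutually biholomorphic, a contradiction. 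Hence $\omega_{WP}$ is positive definite at every $b\in B$, and therefore K\"ahler.

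The main obstacle is Step~2: although the formal structure of the computation is dictated by Lemma \ref{WP-alpha}, identifying the result as $\|A(v)\|_{L^2}^2$ requires a careful comparison between the definition of $\omega_{WP}$ and the horizontal-lift description of $\rho(v)$, and is precisely the calculation underlying \cite[Theorems 7.4 and 7.8]{fujiki-schumacher}.
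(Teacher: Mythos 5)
This statement is quoted verbatim from Fujiki--Schumacher and the paper gives no proof of it at all --- it is imported as \cite[Theorem 7.4]{fujiki-schumacher} --- so there is no argument in the paper to compare yours against. Your sketch is a reasonable reconstruction of the strategy behind the cited result (identify $\omega_{WP}(v,\bar v)$ with an $L^2$-quantity built from Kodaira--Spencer data, read off semi-positivity, and use distinctness of the fibres for strict positivity), but as written it contains one genuine error and one glossed-over gap.

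The error is in Step 2: you assert that replacing $A(v)$ by its $\omega_b$-harmonic representative ``does not affect the $L^2$-norm since $A(v)$ is already cohomologous to the harmonic one.'' Cohomologous forms do \emph{not} have equal $L^2$-norms; by the Hodge decomposition the harmonic representative strictly \emph{minimises} the norm in its class, and $\|A(v)\|^2 = \|H(A(v))\|^2 + \|A(v)-H(A(v))\|^2$. The correct Fujiki--Schumacher identity expresses $\omega_{WP}(v,\bar v)$ as the squared $L^2$-norm of the \emph{harmonic projection} of $A(v)$, and obtaining it is not the ``clean cancellation'' you describe: one must solve a fibrewise elliptic (Lichnerowicz-type) equation to pass from the horizontal-lift representative to the harmonic one, and this is precisely where the cscK hypothesis on the fibres is used. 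Semi-positivity still follows, but your formula as stated is not the one that holds. The gap is in Step 3: vanishing of $\omega_{WP}(v,\bar v)$ at a single point $b$ gives $\rho(v)=0$, i.e.\ non-immersivity of the classifying map at $b$, and you need to contradict mere \emph{injectivity} of that map. An injective holomorphic map into a (possibly singular) Kuranishi space can have vanishing differential at a point (think of the normalisation of a cusp), so ``pairwise distinct fibres $\Rightarrow$ injective Kodaira--Spencer map'' requires an argument you do not supply; your claim that one ``produces in a standard way a curve in $B$ along which the fibres are mutually biholomorphic'' does not follow from vanishing of $\rho(v)$ at one point. This is exactly why Fujiki--Schumacher state their theorem under the \emph{effectivity} hypothesis (injectivity of the Kuranishi map), as the paper's own remark immediately after the statement points out; the cleanest fix is to prove the theorem under that hypothesis and then address separately why the fibration at hand is effective.
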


\begin{remark} Fujiki-Schumacher prove the above under the somewhat more general hypothesis that the family is effective, which means that the Kuranishi map is injective (i.e. the fibres locally have distinct holomorphic structures). \end{remark}

The second result of Fujiki-Schumacher we shall use concerns the existence of a moduli space of polarised manifolds admitting cscK metrics. We remark that while it is expected that such a manifold is a quasi-projective variety, this is far from known.

\begin{theorem}\cite[Theorem 6.6]{fujiki-schumacher} There exists a reduced complex space $\M$ which is a moduli space of polarised manifolds which admit cscK metrics and have discrete automorphism group. The Weil-Petersson metric is a K\"ahler metric on this space. \end{theorem}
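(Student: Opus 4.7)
The plan is to realise $\M$ as a subspace of a GIT quotient, leveraging Donaldson's asymptotic Chow stability theorem \cite{donaldson-projective-embeddings}: every cscK polarised manifold $(X,L)$ with discrete automorphism group is Chow stable after embedding by $|kL|$ for $k \gg 0$. Fix a Hilbert polynomial $h$, and for $k$ large enough that Donaldson's theorem applies uniformly to all manifolds of this polynomial, consider the Hilbert scheme $\mathrm{Hilb}_k$ of closed subschemes of $\pr^{N_k}$ with the appropriate Hilbert polynomial, where $N_k + 1 = h(k)$. The natural $\SL(N_k+1)$-action has an open Chow-stable locus $\mathrm{Hilb}_k^s$, and the GIT quotient $Y_k := \mathrm{Hilb}_k^s \git \SL(N_k+1)$ is a separated reduced complex space.

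Inside $Y_k$ I define $\M_k$ as the locus of points represented by smooth polarised manifolds whose polarisation admits a cscK metric. Since these have discrete automorphism group, distinct isomorphism classes give distinct points of $\M_k$, and one checks $\M_k$ is independent of $k$ for $k$ large (the embeddings by $|kL|$ and $|k'L|$ define compatible morphisms between the corresponding quotients that identify the cscK loci). I then set $\M := \M_k$. To verify the moduli property, any holomorphic family $\pi : \mathcal{X} \to S$ of such polarised manifolds yields, via the universal property of the Hilbert scheme followed by descent through the GIT quotient, a classifying morphism $S \to \M$; conversely Chow-stable orbits are closed with finite stabiliser, so points of $\M$ recover isomorphism classes faithfully.

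The Weil-Petersson form descends to $\M$ because the fibre-integral definition given before Lemma \ref{WP-alpha} is compatible with base change: for a family $\mathcal{X} \to S$ with classifying map $\phi : S \to \M$, the Weil-Petersson form on $S$ is the pullback of a canonical form $\omega_{WP}$ on $\M$. Semi-positivity is immediate from \cite[Theorem 7.4]{fujiki-schumacher}, and strict positivity follows because distinct points of $\M$ parametrise non-biholomorphic manifolds, so any family over a neighbourhood in $\M$ is effective in the Fujiki-Schumacher sense. The main obstacle will be showing that $\M_k$ is genuinely a reduced analytic subspace of $Y_k$ with the correct functorial behaviour. Openness of the cscK condition in families follows from the implicit function theorem applied to the Lichnerowicz operator, which is invertible on functions orthogonal to constants in the discrete automorphism setting; however, identifying $\M_k$ as a locally closed subspace of the right dimension will likely require either appealing to analytic convergence results for cscK metrics, or cutting out $\M_k$ via the CM line bundle and its natural hermitian structure, so that the Chow-polystable locus for the CM line bundle recovers the cscK locus.
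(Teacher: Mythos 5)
Your construction hinges on the claim that one can choose $k$ ``large enough that Donaldson's theorem applies uniformly to all manifolds of this polynomial''. This is a genuine gap, and in fact it is precisely the open problem that blocks quasi-projectivity of the cscK moduli space. Donaldson's theorem produces, for each individual $(X,L)$, some $k_0(X,L)$ beyond which $(X,kL)$ is Chow stable, and $k_0$ depends on the $R$-bounded geometry of the cscK metric and the complex structure. Over the \emph{entire} class of cscK polarised manifolds with a fixed Hilbert polynomial there is no known uniform bound on this geometry, so no single Hilbert or Chow scheme is known to receive all of them in its stable locus. If your argument went through, each component of $\M$ would sit inside a quasi-projective GIT quotient, whereas the paper explicitly remarks that quasi-projectivity of this moduli space ``is far from known''. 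The cited statement is instead proved by Fujiki--Schumacher by purely analytic means: they glue local Kuranishi slices, using the action of the automorphism groups to give $\M$ the structure of a reduced complex space (with orbifold charts $V/G$), and the K\"ahler property of $\omega_{WP}$ is verified on these local covers; no projective embedding is involved.

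The paper does use your GIT strategy, but only in the situation where it actually works: for the image $p(B)$ of a \emph{compact} base $B$, compactness gives uniform geometry bounds on the fibres, hence a single $k$ valid for all of them, and the resulting subvariety of the Chow quotient is all that is needed for the main theorem (similarly, their projectivity result applies only to \emph{proper} subspaces of $\M$, again because properness restores the uniform bound). Two further points in your write-up need care even in that restricted setting: the descent of $\omega_{WP}$ to $\M$ cannot be done directly since there is no universal family over $\M$ (finite but nontrivial stabilisers); the paper passes to a finite cover $\M'\to\M$ carrying a universal family via Koll\'ar's result and obtains positivity only on a Zariski open locus. And your final paragraph, which defers the identification of $\M_k$ as a locally closed analytic subspace to ``analytic convergence results'' or the CM line bundle, is deferring exactly the content of the theorem being proved.
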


The result above should be understood as follows. A given point $x \in \M$ represents a polarised manifold which may have discrete, non-trivial automorphism group. Using this, Fujiki-Schumacher give $\M$ the structure of an analytic Deligne-Mumford stack (also called ``orbi-spaces'' or ``V-spaces''). That is, around each $x\in \M$ there is an open set $U \subset \M$, a group $G$ and a quotient $V/G \cong U$, where $V$ is a (genuine) analytic space. In our case $G$ is the automorphism group of the polarised manifold which $x$ represents. Morphisms of such spaces of are defined to be ones which lift to the finite covers locally. The Weil-Petersson metric is then a K\"ahler orbi-metric on $\M$ (i.e. a $G$-invariant metric on the covers), which is possible since a cscK metric is invariant under the (finite) automorphism group of the polarised manifold. We refer to \cite[Definition 1.5]{fujiki-schumacher} for further details on this construction, including a discussion of K\"ahler geometry on analytic spaces (the main point being that a K\"ahler metric is one which, under a local embedding in some $\C^N$, locally extends to a K\"ahler metric on $\C^N$).

Since our techniques allow one to restrict to Zariski open subsets, one can typically ignore the orbi-structure on $\M$ by restricting to a subset. In any case we shall give another construction of the  moduli space relevant to our work using algebro-geometric techniques, which in particular make the universal properties of such a moduli space that we shall require more transparent. 

\begin{remark} From moduli considerations, we expect in Proposition \ref{automs-prop} that the situation that $\alpha$ is merely positive on a Zariski open locus is rather fundamental in the study of twisted cscK metrics. For example, a typical situation in algebraic geometry is a fibration $X \to B$ where the fibres $X_b$ are only smooth on a Zariski open locus of $B$, and the other fibres can be very singular. In this case there is no hope to extend a Weil-Petersson metric as a K\"ahler metric over a relevant moduli space (as the map to the moduli space exists only on a Zariski open locus of $B$), and the Weil-Petersson metric may only extend to a current on $X$. Nevertheless for our proof above we need the Weil-Petersson metric to at least extend to a continuous metric over $B$, which happens for example when $X\to B$ is a fibration where the general fibre is a smooth curve but where a special fibre may be nodal. \end{remark}

The existence of the moduli space, together with Lemma \ref{WP-alpha}, allow us to view the fibration $X\to B$ as induced from a map $p: B \to \M$, with $\M$ endowed with the Weil-Petersson metric.

\subsection{Chow stability and lifts}

The goal of this section is to give another construction of a moduli space which parametrises the fibres of the fibration $X \to B$. This construction uses Geometric Invariant Theory (GIT), and from this construction we will obtain some useful properties from the general theory of GIT \cite{GIT}. More precisely, we will construct the moduli space as a quotient of a Chow variety, which parametrises certain subvarieties in projective space.

We begin with a brief discussion of the Chow variety, referring to \cite[Section 1.3]{kollar} for more details. Fixing some $Y\subset\pr^n=\pr(V)$, we only show how to associate a point in another projective space. When this is done in families, this yields a moduli space, namely the Chow variety, which can be compactified naturally by adding certain cycles at the boundary. We denote by $m$ the dimension of $Y$ and $d$ the degree of $Y$, which means $(\scO_{\pr^n}(1)|_Y)^m=d$. The set $Z$ of $(n-m-1)$-dimensional planes intersecting $Y$ nontrivially is by definition a subvariety of the Grassmanian $\Grass(n-m,n+1)$. One shows that $Z$ is a codimension one subvariety of this Grassmanian, and hence  $Z=\V(f)$ is the vanishing locus of some section $f\in H^0(\Grass(n-m,n+1), \scO(d))$ unique up to scaling. Thus associated to $Y$, we have a section $f\in H^0(\Grass(n-m,n+1), \scO(d))$ unique up to scaling, and so a corresponding point $[f]\in \pr(H^0(\Grass(n-m,n+1), \scO(d)))$, called the Chow point. We denote by $\Ch(\pr^m) \subset \pr(H^0(\Grass(n-m,n+1), \scO(d)))$ the Chow variety induced from this construction, and note that it admits an ample line bundle induced from the line bundle $\scO_{\Ch(1)}$ on $\pr(H^0(\Grass(n-m,n+1), \scO(d)))$.

For any $Y\subset \pr^m$, for any $g\in G:=\SL(m)$ the variety $g \cdot Y \subset \pr^m$ is clearly isomorphic to $Y$. Thus to obtain a moduli space which parametrises subvarieties of $\pr^m$, we wish to take a quotient of $\Ch(\pr^m)$ by the group $\SL(m+1)$, which acts on $\pr(H^0(\Grass(n-m,n+1))$ in a natural way. In order to do this, we use the machinery of GIT, for which we refer to \cite{GIT} for an introduction. This general theory then produces a moduli space $(\M,\scL) = (\Ch(\pr^m),\scO_{\Ch}(1))\git \SL(m+1)$, which is a projective variety. In fact, this quotient is constructed by taking $M = \Proj \oplus_{r \geq 1}H^0(\Ch(\pr^m), \scO_{\Ch}(r))^G$, where for a vector space $W$, we denote by $W^G$ the $G$-invariant sections. Clearly the induced rational map $\Ch(\pr^m) \dashrightarrow M$ is only defined on points $x \in \Ch(\pr^m)$ for which there exists an $r$ and a $G$-invariant section of  $H^0(\Ch(\pr^m), \scO_{\Ch}(r))$ which does not vanish at $x$; let us call $x$ \emph{Chow stable} if this holds and further the stabiliser of $x$ is finite. The fundamental result of GIT is that $\M$ is a \emph{coarse} moduli space when restricted to the stable locus of $\Ch(\pr^m)$, which is Zariski open (of course this holds for any projective variety, not just the Chow variety) \cite{GIT}. We denote by $M^{st}$ the quasi-projective scheme which parametrises Chow stable varieties in $\pr^m$. 

\begin{remark}  $M^{st}$ being a coarse moduli space means that a map $B\to M$ is associated to each family $X \to B$ where each fibre $X_b$ corresponds to a Chow stable variety in $\pr^m$.  Remark that in general no universal family $U\to \M^{st}$ can exist owing to the presence of points $x \in M^{st}$ which have finite but non-trivial stabiliser. If one restricts to the locus of $M^{st}$ for which the stabiliser is trivial, then such a universal family exists (which means that on this locus, $M^{st}$ is a \emph{fine} moduli space). \end{remark}

\begin{definition} We say that a polarised variety $(Y,H)$ is \emph{asymptotically Chow stable} if $Y$ is Chow stable under the Kodaira embeddings $Y \hookrightarrow \pr(H^0(Y,H^r))$ for $r \gg 0$. \end{definition}

The key result we need linking the moduli theory of cscK manifolds and polarised varieties is the following, due to Donaldson \cite{donaldson-projective-embeddings}.

\begin{theorem}\cite{donaldson-projective-embeddings} Suppose $(Y,H)$ is a smooth polarised variety which has discrete automorphism group. If $(Y,H)$ in addition admits a cscK metric, then $(Y,H)$ is asymptotically Chow stable. \end{theorem}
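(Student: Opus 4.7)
The plan is to follow Donaldson's original argument, which reduces asymptotic Chow stability to the construction of balanced embeddings at each level $r \gg 0$. Recall that by the theorem of Zhang, Luo, and Wang, a projective subvariety $Y \subset \pr^N$ is Chow polystable if and only if there exists a basis of $H^0$ (equivalently, an $\SL(N+1)$-translate of the given embedding) such that the embedding is balanced, meaning $\int_Y \frac{z_i \bar z_j}{\sum_k |z_k|^2}\omega_{FS}^m = c\, \delta_{ij}$ for the appropriate topological constant $c$; moreover, when the automorphism group is finite, polystability is equivalent to stability. Thus it suffices to show that the Kodaira embedding of $Y$ via $H^0(Y,H^r)$ admits a balanced translate for all $r \gg 0$.

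The key analytic input is the Tian--Zelditch--Catlin--Lu expansion of the Bergman function: if $h$ is a Hermitian metric on $H$ with positive curvature $\omega$, and $\rho_r(h) = \sum_i |s_i|^2_{h^r}$ for an $L^2(\omega)$-orthonormal basis $\{s_i\}$ of $H^0(Y, H^r)$, then
$$\rho_r(h) = r^m + \tfrac{1}{2} S(\omega)\, r^{m-1} + O(r^{m-2}).$$
A direct calculation shows that an embedding via $\{s_i\}$ is balanced precisely when $\rho_r(h)$ is constant on $Y$. Hence if $\omega$ is cscK, the Bergman function is already constant to order $r^{m-2}$, providing an approximately balanced embedding at every large $r$.

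The main task is then to perturb these approximate solutions to genuine balanced embeddings. I would organise this as an iterative Newton-type scheme: at each stage, apply a suitable Hermitian endomorphism of $H^0(Y, H^r)$ to the basis so as to cancel the next-order term in the expansion of $\rho_r - r^m - \tfrac{1}{2}S(\omega)r^{m-1}$. After suitable rescaling, the linearisation of the map sending a traceless Hermitian endomorphism of $H^0(Y,H^r)$ to the resulting change in the Bergman function is asymptotic to $\tfrac{1}{2}\mathcal{D}^*_\omega \mathcal{D}_\omega$, so each correction step reduces to solving a Lichnerowicz-type equation on $Y$. The hypothesis that $\Aut(Y,H)$ is discrete ensures, via the identification of $\ker \mathcal{D}^*_\omega \mathcal{D}_\omega$ with holomorphy potentials recalled in Section~\ref{subsec:extremal}, that this kernel consists only of constants, giving invertibility on the orthogonal complement.

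The main obstacle will be controlling these estimates \emph{uniformly in} $r$: one needs bounds on the Bergman expansion in $C^k$ norms together with uniform bounds on the inverse of the finite-dimensional Lichnerowicz-type operator acting on traceless Hermitian endomorphisms of $H^0(Y,H^r)$, and the two sets of bounds must be compatible enough that a Newton iteration converges at each sufficiently large level. Once the iteration produces a genuine balanced metric $h_r$ for each $r \gg 0$, we obtain Chow stability of $(Y,H^r)$, hence asymptotic Chow stability of $(Y,H)$; as a byproduct, the associated Fubini--Study metrics converge smoothly to the given cscK metric $\omega$, yielding a quantitative refinement of the statement that is frequently useful in applications.
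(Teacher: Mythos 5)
This statement is not proved in the paper at all: it is imported verbatim from Donaldson's work \cite{donaldson-projective-embeddings}, so there is no internal argument to compare yours against. Your outline is, however, a faithful summary of Donaldson's actual strategy: reduce Chow stability to the existence of a balanced embedding via Zhang--Luo--Wang, use the Tian--Zelditch--Catlin--Lu expansion of the Bergman function to see that a cscK metric gives an approximately balanced embedding at every large $r$, and use discreteness of $\Aut(Y,H)$ (so that $\ker\mathcal{D}^*_\omega\mathcal{D}_\omega$ is the constants) to invert the linearised problem. Two caveats. First, what you call ``the main obstacle'' --- uniform-in-$r$ control of the inverse --- is in fact the entire content of Donaldson's paper, and your proposal only names it rather than resolving it; as written this is a plan, not a proof. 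Second, Donaldson's argument is not literally a Newton iteration on the Bergman function: he works with the moment-map formulation on the space of bases of $H^0(Y,H^r)$, and the decisive estimate is a lower bound for the derivative of the moment map along the $\SL(N+1)$-orbit (equivalently, an upper bound on the operator $\Lambda$ measuring the failure of the orbit map to be injective), which is where discreteness of the automorphism group and the identification of the limiting operator with the Lichnerowicz operator actually enter. If you intend to carry this out, you should follow that moment-map route (or Phong--Sturm's account of it) rather than trying to iterate directly on $\rho_r$, since the naive iteration does not obviously close up without the orbit-wise convexity that the moment-map picture provides.
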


The situation we are interested in is that of a holomorphic submersion $X\to B$, with a relatively ample line bundle $L$, such that $(X_b,L_b)$ admits a cscK metric for each $b\in B$. Thus by the above, each $X_b$ is Chow stable when embedded in projective space using global sections of $k L_b$ for some $k=k(b)$ which \emph{a priori} depends on $b$. We claim that in fact $k$ can be chosen independently of $b$ (which is well known to experts). This follows since $k$ is a function of the ``geometry bounds'' of $(X_b,L_b)$ which are bounded independently of $b$ since $B$ is compact and the cscK metrics and complex structures vary smoothly among the fibres (see \cite{donaldson-projective-embeddings} for the notion of $R$-bounded geometry which we are using). 

\begin{corollary} There exists a projective space $\pr^m$ such that each $X_b$ is embedded in $\pr^m$ using global sections of $kL_b$ with $X_b$  Chow stable for all $b\in B$. \end{corollary}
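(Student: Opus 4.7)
The plan is to apply Donaldson's asymptotic Chow stability theorem to each fibre individually, and then use the compactness of $B$ together with the smooth variation of the fibres to make the relevant integer $k$ uniform, before passing to a single ambient projective space.

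First, I would invoke Donaldson's theorem to conclude that for every $b\in B$ the polarised manifold $(X_b,L_b)$ is asymptotically Chow stable. This yields, for each $b$, an integer $k_0(b)$ such that the Kodaira embedding of $X_b$ by $H^0(X_b, k L_b)$ is Chow stable whenever $k\geq k_0(b)$. At this stage there is no reason for $k_0$ to be independent of $b$.

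The crucial step is the uniformity in $b$. The idea is that, on inspection, the integer $k_0$ produced in Donaldson's argument depends only on the parameters of the $R$-bounded geometry of $(X_b,L_b,\omega_b)$, that is, on bounds on curvature, diameter, and injectivity radius measured with respect to the cscK metric $\omega_b\in c_1(L_b)$. Now, by hypothesis the fibrewise cscK metrics $\omega_b$ glue to the relatively K\"ahler metric $\omega_0\in c_1(H)$, and both the complex structures on the fibres and these metrics vary smoothly in $b$. Since $B$ is compact, the associated geometric bounds are attained uniformly over $B$, so one may extract a single $k_0$ for which Chow stability holds simultaneously for all $b$.

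Finally, to realise the fibres in a common projective space, I would use that the Hilbert polynomial $h(r)=\chi(X_b,rL_b)$ is constant in the flat family (cf.\ Remark \ref{fibre-independence}), and that for $k$ large Kodaira vanishing gives $h^0(X_b,kL_b)=h(k)$ independently of $b$. After possibly increasing $k_0$ to ensure vanishing of higher cohomology, one sets $m+1:=h(k_0)$ and obtains embeddings $X_b\hookrightarrow \pr(H^0(X_b,k_0L_b)^*)\cong \pr^m$, Chow stable for every $b$. The main obstacle, and it is essentially a bookkeeping one, is justifying the uniformity step: one must trace through Donaldson's argument to check that $k_0$ is genuinely controlled by $R$-bounded geometry quantities, but once this is granted the compactness of $B$ and smoothness of the family make the conclusion automatic.
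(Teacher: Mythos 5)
Your proposal is correct and follows essentially the same route as the paper: apply Donaldson's theorem fibrewise, then use compactness of $B$ and the smooth variation of the cscK metrics and complex structures to get a uniform bound on the $R$-bounded geometry and hence a uniform $k$. Your extra remarks on using constancy of the Hilbert polynomial and vanishing of higher cohomology to land all fibres in a single $\pr^m$ are a reasonable elaboration of what the paper leaves implicit.
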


We now take the GIT quotient of $\Ch(\pr^m)$ by $\SL(m+1)$ as above to obtain a coarse moduli space $M^{st}$ which parametrises Chow stable varieties in $\pr^m$. The family $X\to B$ then corresponds to a morphism $p: B \to M$. We set $\M = p(B)$ to be the image of $B$ under $p$, which is a variety. 

We wish to endow $\M$ with a Weil-Petersson metric. Since a given fibre $(X_b,L_b)$ may have non-trivial automorphism group, there is no universal family $U\to \M$ in general. Instead of endowing $\M$ with an orbifold Weil-Petersson metric, analogous to Fujiki-Schumacher's construction, we give a simpler and more direct argument as follows. We first replace $\M$ with the image $p(B)$ of $B$ in $\M$. While in general there is no universal family over $\M$, by a result of Koll\'ar \cite[Section 2]{kollar-projectivity} there \emph{is} a universal family after passing to a finite cover $\M' \to \M$. Denote by $U'\to \M' $ this universal family. Since $\M'\to \M$ is finite, there are codimension one subschemes $D'$ and $D$ of $\M'$ and $\M$ respectively, such that $M' \to M$ is \'etale. Let $B^o$ be the Zariski open locus of $B$ which maps to $\M'$. 

The universal family $U'\to \M'$ induces a Weil-Petersson metric on $M'$ by Remark \ref{submersion-of-spaces}, which is K\"ahler on $\M'\backslash D'$. Picking a connected component $Z$ of $\M'\backslash D'$, the map $p: B \to \M$ induces a map $p: B^0 \to Z$. The Weil-Petersson metric is now a positive K\"ahler metric on $Z$, and by the construction of Section \ref{sec:prelims}, extends to a smooth $(1,1)$-form on $B$. Of course, one can think of $Z$ as a subvariety of $\M$.

This is all we need to obtain the main result of this section.

\begin{proposition} \label{prop:fibre-automs} With all notation as above, $\ker \scL_{\alpha}$ corresponds to the holomorphic vector fields on $B$ which lift to holomorphic vector fields on $X$, i.e. induce elements of $\mfh$. In particular there is a natural identification $\mfh \cong \mfp$, and the extremal vector field on $B$ lifts to $X$ if and only if its flow preserves the map, or equivalently is an element of $ \mfp$.

\end{proposition}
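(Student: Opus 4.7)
Since Proposition \ref{automs-prop} has already identified $\ker \scL_{\alpha}$ with the holomorphy potentials of vector fields in $\mfp$ preserving the map $p : B \to \M$, it suffices to prove the natural identification $\mfh \cong \mfp$. The statement about which vector fields lift to $X$, and in particular the equivalence concerning the extremal vector field on $B$, will then follow immediately.

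My first step would be to construct the natural map $\pi_* : \mfh \to \mfp$. Given $\xi \in \mfh$, the short exact sequence $0 \to \scV \to TX \to \pi^* TB \to 0$ produces a holomorphic section $\pi_* \xi$ of $\pi^* TB$ on $X$. Restricted to a compact fibre $X_b$ this is a holomorphic section of the trivial bundle $T_b B \otimes \scO_{X_b}$, hence constant, so $\pi_* \xi$ descends to a holomorphic vector field $\mu$ on $B$. Since $\xi$ has a zero, so does $\mu$, placing $\mu$ in $\mfb$. The flow of $\xi$ consists of polarised biholomorphisms of $(X, H)$ sending each fibre $X_b$ isomorphically, as a polarised variety, to $X_{\mu_t(b)}$; these fibres thus represent the same point of $\M$, so $p \circ \mu_t = p$ and $\mu \in \mfp$. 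Injectivity of $\pi_*$ is then immediate: any element of the kernel is a holomorphic vertical vector field, whose restriction to each fibre is a holomorphic vector field that vanishes by the discrete automorphism hypothesis.

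For surjectivity I would exploit the Chow realisation of $\M$. Given $\mu \in \mfp$, on a small open set $U \subset B$ a trivialisation of the vector bundle $\pi_*(kH)$ produces a holomorphic lift $\tilde p_U : U \to \Ch(\pr^m)$ of $p|_U$. As $\mu$ preserves $p$, the pushforward $(\tilde p_U)_* \mu$ is tangent to the $\SL(m+1)$-orbit at each $\tilde p_U(b)$, and because this orbit has finite stabiliser there is a unique holomorphic map $\eta : U \to \mathfrak{sl}(m+1)$ whose pointwise infinitesimal action on $\pr^m$ realises $(\tilde p_U)_* \mu$. The linear $\mathfrak{sl}(m+1)$-action on $\pr^m$ then produces a holomorphic vector field on $U \times \pr^m$ which is tangent to the tautological subvariety identified with $X|_U$, giving a local lift of $\mu$. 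Uniqueness of lifts, established in the injectivity argument, ensures that these local lifts automatically glue to a global holomorphic vector field $\xi$ on $X$ with $\pi_* \xi = \mu$. Finally, since $\mu$ vanishes at some $b_0 \in B$, the restriction $\xi|_{X_{b_0}}$ is a holomorphic vector field on $X_{b_0}$ and thus zero, placing $\xi$ in $\mfh$.

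The main obstacle is the surjectivity step, for which the concrete GIT realisation of $\M$ as a quotient of the Chow variety is essential: it is the explicit $\SL(m+1)$-action on $\pr^m$ that permits one to write down the infinitesimal lift of $\mu$ in local charts, and it is the discreteness of the stabilisers that makes this lift both unique and globally coherent. The first and final assertions of the proposition then follow by chaining the isomorphisms $\ker \scL_{\alpha} \cong \overline{\mfp} \cong \overline{\mfh}$ coming from Proposition \ref{automs-prop} and the identification $\mfh \cong \mfp$ above.
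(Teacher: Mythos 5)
Your route to the identification $\mfh\cong\mfp$ is genuinely different from the paper's. The paper works at the level of group elements: it uses the fact that (after passing to a finite cover of $\M$ and a Zariski open subset of $B$) there is a universal family $\scU\to\M'$, so that $p\circ g=p$ forces $g^*p^*\scU\cong p^*\scU$; the resulting birational self-maps of $X$ are then shown to be biholomorphisms by an elementary codimension argument, and the Lie algebra statement follows by differentiating. You instead construct the infinitesimal lift directly and locally: trivialise $\pi_*(kH)$ over $U\subset B$, observe that $(\tilde p_U)_*\mu$ is tangent to the $\SL(m+1)$-orbits (this is fine -- the curve $t\mapsto\tilde p_U(\mu_t(b))$ lies in a single orbit because the quotient is geometric on the stable locus, and orbits are smooth), use finiteness of the stabilisers to produce a unique holomorphic $\eta:U\to\mathfrak{sl}(m+1)$, and glue by uniqueness. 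This has the attractive feature of bypassing the finite-cover and birational-morphism gymnastics entirely, and your argument for the inverse direction $\mfh\to\mfp$ (constancy of the horizontal component on compact fibres, plus the observation that the flow of $\xi$ carries $(X_b,H_b)$ to an isomorphic polarised fibre, hence fixes the Chow point up to $\SL(m+1)$) is cleaner and more explicit than what the paper records.

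There is, however, one genuine gap, and it sits at the crux of your surjectivity step: the assertion that the vector field $\mu\oplus\eta$ on $U\times\pr^m$ ``is tangent to the tautological subvariety identified with $X|_U$.'' What your construction actually guarantees is an equality of tangent vectors \emph{on the Chow variety}: the derivative of $b'\mapsto\Ch(X_{b'})$ along $\mu$ agrees with the infinitesimal $\mathfrak{sl}(m+1)$-action of $\eta(b)$ on $\Ch(X_b)$. To upgrade this to tangency of the vector field to $X|_U$ you need the corresponding equality of first-order deformations of the subvariety itself, i.e.\ of sections of the normal bundle $N_{X_b/\pr^m}$. This requires knowing that the derivative of the Chow map -- equivalently, of the Hilbert--Chow (or Douady--Chow) morphism -- is injective at points corresponding to smooth subvarieties, so that a first-order deformation of $X_b\subset\pr^m$ inducing the zero tangent vector at the Chow point is itself zero. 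This is true for smooth subvarieties, but it is not a formality and you assert it silently; without it the flow of $\mu\oplus\eta$ could a priori move $X|_U$ off itself while fixing all Chow points to first order. You should either prove or cite this unramifiedness statement, or replace the Chow variety by the Hilbert scheme at this step (where the universal family makes tangent vectors literally normal vector fields), reserving the Chow variety for the stability and quotient arguments where Donaldson's theorem requires it.
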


\begin{proof}  We first show that elements $g \in\Aut(p)$ lift to an automorphism of $X$. Just as above, we obtain a map on a Zariski open locus $B^0$ of $B$ to a moduli space $\M'$ with a universal family $\scU \to \M'$. We begin with the case that $B^0 = B$, so that we may assume $\M'=\M.$ Then the moduli map $$p: B \to \M$$ satisfies $p\circ g = p$, by definition of an automorphism of $p$. Since $\M$ is a \emph{fine} moduli space, the family $X\to B$ is isomorphic to the pullback family $p^*\scU\to B$. We also obtain a pullback family $g^*X \to B$ by the fibre product construction, so that the following diagram commutes: 

\[ \begin{tikzcd}
g^*X \arrow{r}{g_X} \arrow[swap]{d} & X \arrow{d}{\pi} \\%
B \arrow{r}{g}& B
\end{tikzcd}
\] But since $p \circ g = p,$ it follows that $g^*p^*\scU = p^*\scU$, so that $g_X$ is an isomorphism of $X$ covering $\pi$.

In the general case, we still obtain a fibre product morphism $g_X: g^*X \to X$ constructed in the same manner as above. On $B^0$, we obtain that $\pi^{-1}(B^0) \cong p|_{B^0}^*\scU$. Similarly, after perhaps replacing $B^0$ with a Zariski open subset, we obtain that $g_X^*X \to B^0$ is given as the pullback $g^*p^*\scU = p^*\scU$. Thus the morphism $g_X: g^*X \to X$ is an isomorphism on $B^0$, so in particular it is a birational morphism. The same construction gives a birational morphism $(g^{-1})_X: X \to g^*X$, with $(g^{-1})_X \circ g_X = \Id$ on $B^0$, perhaps again after replacing $B^0$ with a Zariski open subset. In summary, we have produced a birational morphisms $g_X: g^*X \to X$ and $(g^{-1})_X: X \to g^*X$, which are isomorphisms over $B^0$. But this is impossible unless these maps are isomorphisms by elementary reasoning: a morphism which is birational must extend to a codimension two locus in the target, and strictly codimension one in the domain. More precisely, consider the map $g_X \circ (g^{-1})_X: X \to X$. Then $g_X \circ (g^{-1})_X$ must be the identity, as it is the identity on a dense set in $X$. Hence if $g_X$ is an isomorphism away from a codimension at least two subscheme $Z$ of $X$, the preimage $g_X^{-1}(Z)$ must have codimension one, hence $(g_X \circ (g^{-1})_X)^{-1}(Z)$ must have codimension one, which contradicts that $g_X \circ (g^{-1})_X$ is an isomorphism. 

Thus in both cases, automorphisms $g \in\Aut(p)$ lift to an automorphisms of $X$. It follows that elements of the Lie algebra $\mfp = \Lie(\Aut(p))$ give holomorphic vector fields on $X$, and moreover if such a vector field has a zero on $B$ it certainly has a zero on $X$. This gives a natural inclusion $\mfp \subset \mfh$. 

We next return to the short exact sequence $$0 \to \scV \to TX \to \pi^*TB \to 0,$$ with $\scV$ the vertical tangent bundle. A global holomorphic vector field $v$ on $X$ which has a zero and is in the image of $H^0(X,\scV)$ in the long exact sequence $$0 \to H^0(X,\scV) \hookrightarrow H^0(X,TX) \to H^0(X,\pi^*TB) \to \hdots$$ must have a zero on some fibre. We have assumed that the polarised automorphism group $\Aut(X_b,H_b)$ of each fibre $(X_b,H_b)$ is discrete, so there are no such vector fields. Thus we obtain an inclusion $\mfh \hookrightarrow H^0(X,\pi^*TB) \cong H^0(B,TB)$, which then sends a vector field with a zero to a vector field with a zero. Hence this provides an isomorphism $\mfp \cong \mfh$.

Finally,  Remark \ref{general-lift} states that as $\alpha$ is a smooth form on $B$ which is pulled back from a K\"ahler metric on $Z \subset M$, the kernel of $\scL_{\alpha}$ can be identified with the holomorphic vector fields whose flow preserves the map $p$. This gives the sequence of isomorphisms $\ker \scL_{\alpha} \cong \mfp \cong \mfh$, as required.
\end{proof}

\begin{remark}\label{lifting-remark-bundle}It is crucial in the above that we assume the automorphism group of the fibres is discrete. In general, when the fibres have continuous automorphisms, it is not the case that all automorphisms of the map to the moduli space lift to the total space of fibrations. As an example, consider a vector bundle $E \to (B,L)$ with induced projective bundle $\pi: \pr(E) \to (B,L)$. Then all automorphisms of $(B,L)$ preserve the moduli map, as the moduli space is just a point. On the other hand, $\Aut(B,L)$ lifts to an action on $(B,L)$ if and only if $E$ is $\Aut(B,L)$-equivariant, which is not always the case. \end{remark}


\begin{remark}\label{rmk:second-uniqueness} Using this we can clarify the uniqueness statement in Fine's result \cite{fine1}, as promised in Remark \ref{rmk:uniqueness}. Indeed, a twisted cscK metric on $p: B\to \M$ is unique if and only if $\Aut(p)$ is discrete. But this is equivalent to $\Aut(X,rL+H)$ being discrete by Proposition \ref{prop:fibre-automs}.

In particular, when the map has no automorphisms, $\ker \scL_{\alpha}$ is invertible modulo constants. In \cite[Theorem 8.1]{fine1}, Fine assumes that the solution of the twisted cscK equation is unique  in its cohomology class, and claims as a general principle that if a solution to a nonlinear PDE with elliptic linearisation is unique, then its linearisation has to be invertible modulo constants \cite[p431]{fine1}. This is not actually true, and so for Fine's proof to hold, the necessary assumption is that the linearisation is invertible modulo constants. It follows from our results above that that uniqueness of solutions of the twisted cscK equation does actually imply invertibility of the linearisation, through our geometric characterisation of uniqueness and our calculation of $\ker \scL_{\alpha}$, and thus Fine's invertibility assumption does actually hold when the twisted cscK metric is unique. When the base of the fibration has complex dimension one, Fine uses a different, valid argument to obtain the necessary invertibility \cite[Lemma 3.12]{fine1}. We thank J. Fine for advice on this point.

 \end{remark}

Although the following two results are not needed in the remainder of the present work (and are possibly known to some experts), we feel they are worth describing since they are rather fundamental to the moduli theory of manifolds admitting cscK metrics and are simple consequences of the approach we have taken. The first result is first due to Fujiki-Schumacher, whose proof uses positivity of the Weil-Petersson metric on the CM-line bundle \cite[Theorem 6.6]{fujiki-schumacher}. 

\begin{theorem} Any proper analytic subspace of the moduli space of polarised manifolds with discrete automorphism group admitting cscK metrics is projective. \end{theorem}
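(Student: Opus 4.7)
The plan is to embed $Z$ into the quasi-projective GIT moduli space constructed earlier in the paper and then invoke the analytic form of Chow's theorem. Let $Z \subset \M$ be a proper, hence compact, analytic subspace of the moduli space of cscK polarised manifolds with discrete automorphism group. As in the authors' earlier discussion following Proposition \ref{prop:fibre-automs}, after a finite base change $Z' \to Z$ (using Koll\'ar's result on the existence of universal families after such a cover) one obtains a genuine family $\scU \to Z'$ of polarised manifolds $(X_{z'}, L_{z'})$, each admitting a cscK metric and having discrete automorphism group.

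The first step is to produce a uniform integer $k$ such that every fibre is Chow stable when Kodaira-embedded by its complete linear system $|kL|$. This is precisely the uniform bounded-geometry argument that the authors already used for a compact $B$: compactness of $Z'$ together with the continuous variation of the cscK metrics in families bounds the geometry of the fibres uniformly, so Donaldson's theorem on asymptotic Chow stability yields a single $k$ valid for every fibre. Setting $m_k + 1 = h^0(X_{z'}, kL_{z'})$, which is constant in the family, the coarse moduli property of the quasi-projective quotient $M^{st}_k = \Ch(\pr^{m_k}) \git \SL(m_k + 1)$ produces a morphism $p_{Z'} : Z' \to M^{st}_k$.

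The second step is to verify that $p_{Z'}$ has projective image. Since $\M$ separates isomorphism classes of polarised manifolds, $p_{Z'}$ is injective on points; since automorphism groups are discrete and $Z'$ is compact, it is finite onto its image. The image $p_{Z'}(Z')$ is therefore a compact analytic subspace of the quasi-projective variety $M^{st}_k$, and so is a projective subvariety by the analytic form of Chow's theorem. Finite pullback of ampleness shows $Z'$ is projective, and finally $Z$ is projective by descent along the finite cover $Z' \to Z$: one takes an ample line bundle on $Z'$ and uses either Galois averaging or the norm construction to descend it to an ample line bundle on $Z$.

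The main obstacle is the orbifold/stack structure of $\M$, which prevents the direct construction of a universal family over $\M$ itself and necessitates Koll\'ar's finite base change; some care is also needed to check that the coarse moduli map $p_{Z'}$ is genuinely a morphism of analytic spaces in the presence of the orbifold structure on $\M$. Once these points are settled, the remainder of the argument is a formal combination of standard moduli-theoretic manoeuvres with Donaldson's asymptotic Chow stability, which serves as the bridge between the differential-geometric hypothesis and the algebraic conclusion.
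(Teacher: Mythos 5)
Your proposal is correct and follows essentially the same route as the paper: pass to a finite cover carrying a universal family (Koll\'ar), use compactness to get the uniform geometry bounds needed for Donaldson's asymptotic Chow stability with a single $k$, map to the GIT quotient of the Chow variety, and pull back/descend the resulting ample line bundle. The only cosmetic differences are that the paper inserts a resolution of singularities to guarantee smooth variation of the metrics before invoking the geometry bounds, and concludes by restricting the descended ample line bundle directly rather than via Chow's theorem on the compact analytic image.
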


\begin{proof}Let $D$ be such a space. We can assume $D$ is reduced and irreducible since this does not affect projectivity. Again as taking a finite cover does not affect projectivity, we can assume $D$ is normal and admits a universal family. Then the fibres satisfy the ``geometry bounds'' required to apply Donaldson's theorem \cite{donaldson-projective-embeddings}, and realise $D$ as a subvariety of a moduli space of Chow stable varieties just as we did above. For this, one can take a resolution of singularities $D'\to D$, which still admits a universal family, but for this universal family it is clear that the complex structures and cscK metrics vary smoothly, and compactness gives the desired geometry bounds. But then since $D$ is a subvariety of a moduli space constructed as above, the ample line bundle on the Chow variety descends to one on the moduli space, and hence restricts to an ample line bundle on $D$. Thus $D$ is projective, as required.
\end{proof}

The second is the separatedness of the moduli space of polarised manifolds with discrete automorphism group admitting cscK metrics, again originally due to Fujiki-Schumacher \cite[Theorem 6.3]{fujiki-schumacher}, whose proof uses the unique extension property of cscK metrics.

\begin{theorem}
The moduli space of polarised manifolds with discrete automorphism group admitting cscK metrics is separated.\end{theorem}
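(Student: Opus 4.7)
The plan is to realise the moduli space as an analytic subspace of the GIT quotient $M = \Ch(\pr^N) \git \SL(N+1)$ constructed above, and to deduce separatedness from the fact that $M$ is a projective, hence separated, variety. Concretely, given two analytic families $\pi_i \colon \mathcal{X}_i \to \Delta$ ($i = 1,2$) over the unit disc whose fibres all admit cscK metrics and have discrete automorphism group, together with an isomorphism of polarised families $\mathcal{X}_1|_{\Delta^*} \cong \mathcal{X}_2|_{\Delta^*}$ over the punctured disc, one must show that the central fibres $(\mathcal{X}_1)_0$ and $(\mathcal{X}_2)_0$ are isomorphic as polarised manifolds.

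First I would shrink $\Delta$ so that the Hilbert polynomials of the fibres are constant in each family (and agree across the two families by the assumed isomorphism over $\Delta^*$). Applying Donaldson's theorem to the central cscK fibres $(\mathcal{X}_i)_0$, there is a single integer $k$ such that both are Chow stable when embedded in $\pr^N := \pr(H^0((\mathcal{X}_i)_0, kL_i))$ via $|k L_i|$; after possibly shrinking $\Delta$ further, $kL_i$ is relatively very ample on both families. Choosing local holomorphic frames of the pushforwards $\pi_{i,*}(kL_i)$, normalised using the constancy of the Hilbert polynomial, yields closed embeddings $\mathcal{X}_i \hookrightarrow \pr^N \times \Delta$ over $\Delta$. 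Taking fibrewise Chow points defines holomorphic maps $\phi_i \colon \Delta \to \Ch(\pr^N)$, and the given isomorphism over $\Delta^*$ together with the change of frames produces a holomorphic map $g \colon \Delta^* \to \SL(N+1)$ with $\phi_2|_{\Delta^*} = g \cdot \phi_1|_{\Delta^*}$.

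Composing with the quotient $\Ch(\pr^N) \to M$, one obtains two holomorphic maps $\overline{\phi_i} \colon \Delta \to M$ which agree on $\Delta^*$. Since $M$ is a projective variety, hence separated in both the algebraic and analytic senses, the maps $\overline{\phi_i}$ must agree on all of $\Delta$, and in particular $\overline{\phi_1}(0) = \overline{\phi_2}(0)$. Both points $\phi_i(0)$ are stable by Donaldson's theorem, so their $\SL(N+1)$-orbits are closed and each coincides with the full fibre of the quotient map over its image. Consequently $\phi_1(0)$ and $\phi_2(0)$ lie in the same $\SL(N+1)$-orbit, so that $(\mathcal{X}_1)_0$ and $(\mathcal{X}_2)_0$ differ by a linear automorphism of $\pr^N$, giving an isomorphism of polarised manifolds as required.

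I expect the main obstacle to be essentially bookkeeping rather than geometric: one needs to choose $k$ uniformly so that Donaldson's theorem applies to both central fibres and $kL_i$ is simultaneously relatively very ample on both families over a common disc, and one needs the transition function $g$ to land in $\SL(N+1)$ rather than $\GL(N+1)$. Both are handled by the constancy of the Hilbert polynomial, after which the proof is essentially formal, relying only on Donaldson's theorem and the separatedness of the GIT quotient.
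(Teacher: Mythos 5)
Your proposal is correct and follows essentially the same route as the paper: both reduce separatedness to Donaldson's asymptotic Chow stability of the cscK fibres together with the separatedness of the GIT quotient of the Chow variety. Your write-up is simply a more detailed unwinding of what the paper compresses into ``the machinery of GIT ensures that a moduli space of Chow stable varieties is automatically separated''; the only point worth making explicit is that, after shrinking $\Delta$, \emph{all} fibres (not just the central ones) are Chow stable at the fixed level $k$ (by openness of stability, or by the uniform-$k$ argument over a relatively compact disc), so that the composed maps $\overline{\phi_i}\colon \Delta \to M$ are genuinely defined on the whole disc.
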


\begin{proof} Consider a pointed curve $C^o = C\backslash \{p\}$ with a family $(X^o,L^o) \to C^o$ of polarised manifolds admitting cscK metrics.  Separatedness means that there is at most one extension to a family $(X,L) \to C$ with the extension cscK with discrete automorphism group. Suppose not, so that one had two such extensions $(X,L)$ and $(X',L')$. Then by Donaldson's result and the arguments above, these would induce maps to an appropriate moduli space of Chow stable varieties. But the machinery of GIT ensures that a moduli space of Chow stable varieties is automatically separated, giving a contradiction. \end{proof}

\section{Isometries of twisted extremal metrics on maps}\label{sec:invariance}

Here we establish that if $p: (B,L)\to(\M,H)$ admits a twisted extremal metric $\omega$, then the metric is invariant under a maximal compact subgroup of $\Aut(p)$. This is a consequence of the structure of the automorphism group of $\Aut(B,L)$, analogous to the structure of the automorphism group of a manifold admitting an extremal metric due to Calabi. 

Recall that $\Aut(p)$ consists of automorphisms of $p: B\to \M$ which lift to $L$. These are precisely those which have a zero somewhere, and hence also lift to any other line bundle on $B$. Let $\mathfrak{p}$ denote the Lie algebra of $\Aut(p)$, so that $\mfp$ consists of holomorphic vector fields whose flow fixes $p$ and which vanish somewhere. 

Fixing a metric $\omega$ with induced Riemannian metric $g$, as in Section \ref{sec:lifting} each element $\mu \in\mfp$ can be written as $\sum_j \mu^j \frac{\partial}{\partial z^j}$, where $$\mu^j = g^{j\bar k} \partial_{\bar k} f,$$ for some $f \in C^{\infty}(X,\C)$, unique up to the addition of a constant. The isomorphism $T^{1,0}X \cong TX$ between the holomorphic and real tangent bundles gives a correspondence $$g^{j\bar k} \partial_{\bar k} f \to \frac{1}{2}(\nabla u + J \nabla v),$$ where $f = u+iv$ is the decomposition of $f$ into real and imaginary parts. Recall that $\mft\subset \mfp$ denotes the vector fields in $\mfp$ which correspond to Killing vector fields under this isomorphism, which by above are vector fields with purely imaginary holomorphy potential.

In order to state the structure theorem, we suppose $B$ admits a twisted extremal metric, giving an extremal vector field $\nu$.  Denote by $\mfp^{\lambda}$ the elements of $\mfp$ which satisfy $\scL_{\nu} \mu = \lambda \mu$. The argument is a variant of the classical argument of Lichnerowicz, and especially of Calabi's extension to extremal metrics \cite[Theorem 1.1]{calabi-2}.

\begin{theorem}\label{thm:reductivity} 
Suppose $p$ admits a twisted extremal metric. Then one has a decomposition $$\mfp \cong \mft \oplus i\mft \oplus_{\lambda>0} \mfp^{\lambda},$$ and moreover if $\mfp_{\nu}$ denotes the elements of $\mfp$ which commute with $\nu$ then $ \mft \oplus i\mft  \cong \mfh_{\nu}.$ In particular if $\omega$ is a twisted cscK metric, then $\mfp$ is reductive.

 \end{theorem}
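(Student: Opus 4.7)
The plan is to mimic Calabi's proof of the structure theorem for the automorphism group of an extremal K\"ahler manifold, working systematically with the twisted Lichnerowicz-type operator $\scL_\alpha$ in place of $\mathcal{D}^*_\omega \mathcal{D}_\omega$. From the previous proposition, $\mfp$ is identified with the holomorphy potentials lying in $\ker \scL_\alpha$ (modulo constants). The twisted extremal condition says that $f_\nu := S(\omega) - \Lambda_{\omega}\alpha$ is exactly such a potential; since $f_\nu$ is real, the standard correspondence $\phi = u + iv \mapsto \tfrac{1}{2}(\nabla u + J \nabla v)$ shows that $J\nu$ is a Killing field belonging to $\mft$, so $\nu \in i\mft$, giving the compatibility required for the decomposition.

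Next I would analyse the adjoint action $\mathrm{ad}(\nu) : \mfp \to \mfp$, which under the potential identification is the Poisson bracket $\{f_\nu, \cdot\}$. Introduce on $\ker \scL_\alpha / \mathbb{R}$ a Hermitian pairing obtained from sesquilinearising
\[
H(\phi,\psi) = \int_B \langle \mathcal{D}_\omega \phi, \mathcal{D}_\omega \psi \rangle \omega^n + \int_B \langle \nabla \phi, \nabla \psi \rangle_\alpha \omega^n,
\]
which comes directly from \eqref{eq:needs-conj}. Since $J\nu$ is Killing and preserves both $\omega$ and $\alpha$, its flow is an isometry of this Hermitian structure, so $\mathrm{ad}(J\nu)$ is skew-Hermitian, hence diagonalisable with purely imaginary eigenvalues. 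As complex-linear operators on $\mfp$, $\mathrm{ad}(J\nu) = i\,\mathrm{ad}(\nu)$, so $\mathrm{ad}(\nu)$ is diagonalisable with real eigenvalues, yielding an eigenspace decomposition $\mfp = \mfp_\nu \oplus \bigoplus_{\lambda \neq 0} \mfp^\lambda$.

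To pin down $\mfp_\nu = \mft \oplus i\mft$, I would take real and imaginary parts of a holomorphy potential representing an element of $\mfp_\nu$: since the flow of $J\nu$ fixes this potential, it fixes its real and imaginary parts separately, and a short check shows each piece remains in $\ker \scL_\alpha$. The imaginary-potential summand then corresponds to Killing fields in $\mft$ and the real-potential summand to $i\mft$. The twisted cscK case is an immediate corollary: when $\omega$ is twisted cscK, $f_\nu$ is constant, so $\nu = 0$, whence $\mathrm{ad}(\nu) = 0$ and $\mfp = \mfp_\nu = \mft \oplus i\mft$ is the complexification of a compact real Lie algebra, hence reductive.

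The main obstacle is the non-negativity of the non-zero eigenvalues of $\mathrm{ad}(\nu)$, needed to rule out negative-eigenspace summands in the decomposition. The skew-Hermitian argument above yields reality of the spectrum essentially for free, but fixing the sign requires an additional identity adapting Calabi's calculation to the twisted setting: one must show that the Hermitian operator corresponding to $\{f_\nu, \cdot\}$ is positive semi-definite on $\ker \scL_\alpha$, using the twisted extremal equation together with the explicit form \eqref{eq:needs-conj} of the pairing. This is the only non-trivial analytic content; everything else is a formal consequence of the identification of $\mfp$ with $\ker \scL_\alpha$ from the previous subsection.
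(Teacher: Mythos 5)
Your overall strategy --- identify $\mfp$ with $\ker_0\scL_{\alpha}$, show $\scL_{\alpha}$ is real in the cscK case, and in the extremal case diagonalise $\mathrm{ad}(\nu)$ to get the grading --- is the same as the paper's. But the mechanism you propose for diagonalising $\mathrm{ad}(\nu)$ does not work. The pairing $H(\phi,\psi)=\int_B\langle\mathcal{D}_{\omega}\phi,\mathcal{D}_{\omega}\psi\rangle\,\omega^n+\int_B\langle\nabla\phi,\nabla\psi\rangle_{\alpha}\,\omega^n$ is \emph{identically zero} on $\ker\scL_{\alpha}$: membership in the kernel is precisely the condition $\mathcal{D}_{\omega}\phi=0$ and $|\nabla\phi|_{\alpha}=0$, and semipositivity of $\alpha$ then kills $\langle\nabla\phi,\nabla\psi\rangle_{\alpha}$ for every $\psi$ by Cauchy--Schwarz. (Indeed the whole point of Remark \ref{rmk:uniqueness} is that this Hessian degenerates exactly on $\mfp$.) Being skew-Hermitian for the zero form is vacuous, so you have established neither diagonalisability nor reality of the spectrum of $\mathrm{ad}(\nu)$. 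On top of that, you explicitly defer the sign of the non-zero eigenvalues to ``an additional identity adapting Calabi's calculation''; that identity is not a finishing touch but the entire analytic content of the theorem, so as written the proof has a hole at its centre.

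The paper closes both gaps in one stroke, and the fix is worth internalising. Work with the ordinary $L^2$ inner product on $\ker_0\scL_{\alpha}$, and consider the \emph{conjugate} operator $\bar\scL_{\alpha}$. From the coordinate expression \eqref{explicitG} one reads off that $\scL_{\alpha}-\bar\scL_{\alpha}$ is the first-order operator $g^{j\bar k}\bigl(\nabla_j(S(\omega)-\Lambda_{\omega}\alpha)\nabla_{\bar k}\phi-\nabla_j\phi\,\nabla_{\bar k}(S(\omega)-\Lambda_{\omega}\alpha)\bigr)$, whose associated vector field is $-[\nu,\mu_{\phi}]$. Since $\bar\scL_{\alpha}$ commutes with $\scL_{\alpha}$ it preserves $\ker_0\scL_{\alpha}$, and there it equals $\bar\scL_{\alpha}-\scL_{\alpha}$, i.e.\ it \emph{is} $-\mathrm{ad}(\nu)$ under the potential correspondence. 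But $\bar\scL_{\alpha}$ is self-adjoint for the $L^2$ pairing and its spectrum has a definite sign by the integration-by-parts identity \eqref{eq:needs-conj}; this gives simultaneously the real eigenspace decomposition $\mfp\cong\mfp_{\nu}\oplus_{\lambda>0}\mfp^{\lambda}$ and the positivity you were missing. Your identification $\mfp_{\nu}\cong\mft\oplus i\mft$ is essentially right, but the ``short check'' should be made explicit: $\scL_{\alpha}$ is not a real operator in general, and it is exactly the vanishing of $[\nu,\mu_f]$ that forces $\scL_{\alpha}=\bar\scL_{\alpha}$ on such potentials, so that real and imaginary parts separately lie in the kernel.
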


\begin{proof} Recall from Section \ref{sec:lifting} the operator \begin{equation}\scL_{\alpha}(\phi) = - \mathcal{D}^*_{\omega} \mathcal{D}_{\omega} (\phi) + \frac{1}{2} \langle \nabla \Lambda_{\omega} \alpha,  \nabla \phi \rangle  +  \langle i \partial \overline{\partial} \phi, \alpha \rangle,\end{equation} which was shown to induce an isomorphism  \begin{align*}\ker_0 \scL_{\alpha} &\to \mfp, \\ f  &\to g^{j\bar k} \partial_{\bar k} f\end{align*} where $\ker_0$ denotes the functions with mean value zero with respect to the volume form induced by $\omega$. $\scL_{\alpha}$ is simply the linearisation of the twisted scalar curvature at a twisted cscK metric.

Suppose first that $\omega$ is a twisted cscK metric. In coordinates Lichnerowicz operator takes the form $$\mathcal{D}^*_{\omega} \mathcal{D}_{\omega} (\phi) = \Delta^2 \phi + R^{\bar k j} \nabla_j \nabla_{\bar k} \phi + g^{j\bar k}\nabla_j S(\omega)\nabla_{\bar k} \phi.$$ In general this is not a real operator due to the third term. This implies  \begin{equation}\label{explicitG}\scL_{\alpha}(\phi) =(\Delta^2 \phi + R^{\bar k j} \nabla_j \nabla_{\bar k} \psi  -  \langle i \partial \overline{\partial} \phi, \alpha \rangle) + g^{j\bar k}\nabla_j (S(\omega) - \Lambda_{\omega}\alpha)\nabla_{\bar k} \phi.\end{equation} Since the last term then vanishes, this is a real operator when $\omega$ is twisted cscK. Thus $u + iv \in \ker \scL_{\alpha}$ for real functions $u,v$ if and only if $u,v \in \ker \scL_{\alpha}$. Since a vector field $\mu$ corresponds to a Killing vector field under the isomorphism $T^{1,0}X \cong TX$  if and only if its holomorphy potential is purely imaginary, this implies that $\ker_0 \scL_{\alpha}$ induces an isomorphism $\mfp \cong \mft \oplus i\mft$, which means that $\mfp$ is reductive.

In the general case $\scL_{\alpha}$ is no longer a real operator, and in particular we have $$\scL_{\alpha}(\phi) - \bar \scL_{\alpha}(\phi) = g^{j \bar k} \big( \nabla_j (S(\omega) - \Lambda_{\alpha}\omega)  \nabla_{\bar k} \phi - \nabla_j \phi \nabla_{\bar k} (S(\omega) - \Lambda_{\alpha}\omega) \big).$$ If $\omega$ is a twisted extremal metric,  then $\nu = g^{j\bar k} \partial_{\bar k}( S(\omega) - \Lambda_{\alpha}\omega) \in \mfp$. If $f$ is another holomorphy potential with corresponding vector field $\mu_f$, then the Lie bracket can be computed as $$[\nu, \mu_f] = g^{q\bar r} \nabla_{\bar r}  g^{j \bar k} \left((S(\omega) - \Lambda_{\alpha}\omega)  \nabla_{\bar k}f  - \nabla_j (S(\omega) - \Lambda_{\alpha}\omega) \nabla_{\bar k}f\right).$$ Thus if $[\nu, \mu_f] $, or equivalently the Lie derivative $\scL_{\nu} \mu_f = 0$, then $\scL_{\alpha}(\phi) = \bar \scL_{\alpha}(\phi)$. Denote by $\mfp_{\kappa}$ the elements of $\mfp$ which commute with $\nu$. Since elements of $\mft$ automatically commute with $\nu$ as they correspond to Killing vector fields, as above we obtain a decomposition $\mfp_{\nu} \cong \mft \oplus i\mft.$

The conjugate $\bar\scL_{\alpha}$ acts on $\ker_0 \scL_{\alpha}$ since $\bar\scL_{\alpha}$ and $\scL_{\alpha}$ commute by equation \eqref{explicitG}. The spectrum of $\bar\scL_{\alpha}$ is non-negative, since if $\phi$ is an eigenfunction, $ \lambda \int_{B} |\phi|^2\omega^n = \int_B \bar{\phi} \scL_{\alpha} \phi\omega^n \geq 0$ by equation \eqref{eq:needs-conj}. The space $\ker_0 \scL_{\alpha}$ thus splits as $$\ker_0 \scL_{\alpha} \cong \oplus_{\lambda\geq 0} E_{\lambda},$$ where $E_{\lambda}$ denotes the eigenspace of eigenvalue $\lambda$. If $\phi$ is an eigenfunction then $$\lambda \phi = \bar\scL_{\alpha} \phi = (\bar\scL_{\alpha}  - \scL_{\alpha})\phi.$$ But $$ g^{j\bar k} \partial_{\bar k} (\bar\scL_{\alpha}  - \scL_{\alpha})\phi = -[\nu, \mu_{\phi}],$$ where $ \mu_{\phi}$ is the vector field corresponding to $\phi$ as above. In particular, the eigenspaces can be characterised as satisfying $$\lambda \mu_{\phi} = -[\nu, \mu_{\phi}],$$ thus there is a splitting $$\mfp \cong\mfp_{\nu}\oplus_{\lambda>0} \mfp^{\lambda}$$ as required.
\end{proof}

The proof of the automorphism invariance of a twisted extremal metric is now a consequence of the above, proven in the same way as Calabi's proof for extremal metrics \cite[Theorem 1.3]{calabi-2}. We argue exactly as in  Gauduchon's slightly different proof \cite[Theorem 3.5.1]{gaud-book} of the same statement. Denote by $\scH$ the subgroup of the $\Aut_0(p)$ consisting of isometries of $\omega$, where $\Aut_0(p)$ is the identity component, so that $\mft$ is the Lie algebra of $\scH$.

\begin{corollary}\label{thm:isometries} The isometry group $\scH$ is a maximal, compact, connected subgroup of $\Aut(p)$. \end{corollary}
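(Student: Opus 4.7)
The strategy is to follow Calabi's classical argument, as presented in Gauduchon \cite[Theorem 3.5.1]{gaud-book}, with Theorem \ref{thm:reductivity} furnishing the required structural input.

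Compactness of $\scH$ is immediate: it is a closed subgroup of the full isometry group of $(B, g_\omega)$, which is compact by Myers--Steenrod. The Lie algebra of $\scH$ is exactly $\mft$: any $\xi \in \Lie(\scH) \subset \mfp$ is both holomorphic (as $\scH \subset \Aut(p)$) and Killing (as $\scH$ preserves $g_\omega$), and by the identification $T^{1,0}B \cong TB$ of Section \ref{subsec:extremal} being Killing is equivalent to having purely imaginary holomorphy potential, which places $\xi$ in $\mft$.

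The main structural claim to prove is that $\mft$ is a maximal compact real subalgebra of $\mfp$. Theorem \ref{thm:reductivity} exhibits a Levi-type decomposition $\mfp = (\mft \oplus i\mft) \oplus \bigoplus_{\lambda>0}\mfp^\lambda$. The summand $\mfp_\nu = \mft \oplus i\mft$ is reductive, being the complexification of the compact real form $\mft$, while the summand $\mfn := \bigoplus_{\lambda>0}\mfp^\lambda$ is nilpotent: the bracket relation $[\mfp^\lambda, \mfp^{\lambda'}] \subseteq \mfp^{\lambda+\lambda'}$ (an immediate consequence of the Jacobi identity and $\mathrm{ad}_\nu$-eigenvector additivity) together with finite dimensionality of $\mfp$ forces iterated brackets to vanish. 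A standard Lie-theoretic argument then shows that any compact real subalgebra of $\mfp$ is conjugate into $\mfp_\nu$, and inside the complexification $\mfp_\nu = \mft \oplus i\mft$ the compact real form $\mft$ is already maximal.

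Finally, I would combine this with Mostow's theorem on conjugacy of maximal compact subgroups to obtain the stated maximality and connectedness of $\scH$ as a subgroup of $\Aut(p)$. The group $\Aut_0(p)$ admits a unique-up-to-conjugation maximal connected compact subgroup $K_{\max}$ with Lie algebra $\mft$, and after replacing $\omega$ by a suitable $\Aut_0(p)$-translate (which preserves the twisted extremal property and the extremal vector field by the uniqueness statement of Remark \ref{rmk:uniqueness}) we may assume $K_{\max} = \scH_0$. Then any connected compact subgroup $K \subset \Aut(p)$ containing $\scH$ must lie in some $\Aut_0(p)$-conjugate of $K_{\max}$, which combined with $K \supset \scH_0 = K_{\max}$ forces $K = K_{\max} = \scH_0$; applying the same argument to $\scH$ itself yields $\scH = \scH_0$, giving both maximality and connectedness simultaneously. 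The principal obstacle I expect is the Lie-theoretic step above: one must carefully check that the decomposition of Theorem \ref{thm:reductivity} is a genuine Levi decomposition and that $\mft$ is maximal inside $\mfp_\nu$ as a compact real form. The subgroup-level conclusion is then a routine application of Mostow's theorem.
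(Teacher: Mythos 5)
Your proposal is correct in outline but takes a genuinely different route from the paper. The paper argues by contradiction in the style of Calabi and Gauduchon \cite[Theorem 3.5.1]{gaud-book}: if $\tilde\mft\supsetneq\mft$ were the Lie algebra of a larger compact subgroup, an element $\mu\in\tilde\mft\setminus\mft$ is decomposed along the grading of Theorem \ref{thm:reductivity}; a component of positive degree is shown to lie in the kernel of the Killing form of $\tilde\mft$ (since $\mathrm{ad}_\mu\circ\mathrm{ad}_{\zeta}$ strictly raises degree and so is traceless), hence in the centre of $\tilde\mft$, contradicting $[\nu,\mu]\neq 0$; and a degree-zero component in $i\mft$ is the gradient of a function, whose flow cannot lie in a compact group. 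You instead repackage Theorem \ref{thm:reductivity} as a reductive-by-nilpotent decomposition $\mfp=\mfp_\nu\ltimes\mfn$ (your observation that $\mfn$ is a nilpotent ideal via $[\mfp^\lambda,\mfp^{\lambda'}]\subseteq\mfp^{\lambda+\lambda'}$ is correct and is the same trace/grading mechanism the paper uses, just one level up) and then invoke Mostow's conjugacy theorem. What your route buys is a clean conceptual statement and a simultaneous proof of maximality and connectedness; what it costs is precisely the step you flag yourself: to apply Mostow and to know that compact subgroups meet $\exp(\mfn)$ trivially, you need $\Aut(p)$ to be a Lie group with finitely many components and $\exp(\mfn)$ to be a closed, simply connected (unipotent) subgroup. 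These facts hold because $\Aut(p)$ is a closed algebraic subgroup of the linear algebraic group $\Aut(B,L)$, but you should say so explicitly, since for a general Lie group the "standard Lie-theoretic argument" you cite is not automatic. Two smaller remarks: the replacement of $\omega$ by an $\Aut_0(p)$-translate is unnecessary (once you know every maximal compact of $\Aut_0(p)$ is connected of dimension $\dim\mft$, the inclusion $\scH_0\subseteq K_{\max}$ already forces equality), and it is slightly delicate anyway since pulling back $\omega$ conjugates the extremal field and hence the grading; also note that your closing step only establishes maximality among compact \emph{connected} subgroups of $\Aut(p)$, which is what the corollary asserts, since any such subgroup lies in $\Aut_0(p)$.
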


\begin{proof} Suppose not, and let $\tilde \mft$ be the Lie algebra of a larger compact subgroup of $\Aut(p)$. Let $\mu$ be a vector field in $\tilde \mft \backslash \mft$, and write $\mu = \mu_0 + \sum_{\lambda>0} \mu_{\lambda}$, so that $\scL_{\nu} \mu_{\lambda} = -\sum_r \lambda^r \mu_{\lambda}.$ It is clear that it is enough to assume $\mu=\mu_{\lambda}$ for some $\lambda$. 

First suppose $\mu \in \mfp^{\lambda}$. If $\scB$ is the Killing form of $\tilde \mft$, then the kernel of $B$ coincides with the centre of $\tilde \mft$ (see e.g.  \cite[Proof of Theorem 3.5.1]{gaud-book}). We claim that $\mu \in \ker \scB$. Recall that $\scB(\mu, \zeta_1) = \tr (ad_{\mu} \circ ad_{\zeta_1})$. Let $\zeta_1,\zeta_2$ be of degree $\lambda_1, \lambda_2 \geq 0$. Then $[\mu,[\zeta_1,\zeta_2]]$ is of degree $\lambda+\lambda_1+\lambda_2$. As $\lambda > 0$ and $\lambda_1+\lambda_2\geq 0$,  $ad_{\mu} \circ ad_{\zeta_1}$ is a linear operator on a graded vector space $V = \oplus_k V_k$ which sends $V_k$ to $\oplus_{l \neq k} V_l$ (in our situation, one can even take $l>k$). In particular, its trace has to be $0$, and hence $B(\mu, \zeta_1) = 0$. Thus $\mu$ is in the centre of $\tilde \mft$, contradicting the fact that $\mu$ does not commute with $\nu$ since $\lambda>0$.

Thus we can assume $\mu \in \mfp^0$, and hence can be taken to lie in $ i \mft$ since $\mu$ is not element of $\mft$ by hypothesis. Then $\nu = \nabla f$ for some $f$. But the flow of a complete vector field cannot be simultaneously contained in a compact subgroup of the diffeomorphism group of $X$ and be the gradient of a function \cite[Proposition 3.5.1]{gaud-book}. This gives a contradiction, and proves the result.
\end{proof}

We will also need the following result, which follows from a similar analysis as Theorem \ref{thm:reductivity}.

\begin{proposition}\label{torus-lemma}Suppose $\omega_B$ is a twisted extremal metric, which by above is invariant under a maximal torus $T$ of the automorphism group of $p$. Then the operator $$ \scL'_{\alpha}:  C^{4,\alpha} (B, \mathbb{R})^T \times \overline{\mathfrak{t}}\rightarrow C^{0, \alpha} (B, \mathbb{R})^T$$ defined by $$\scL'_{\alpha}(\phi,f) = \scL_{\alpha} - f.$$ Then $\scL'_{\alpha}$ is well-defined and surjective.  Here, the notation $C^{k,\alpha} (B, \mathbb{R})^T$ means real valued $T$-invariant functions on $B$, and $\overline{\mathfrak{t}}$ consists of the functions $f$ such that $v_f=J \nabla f$ is a Killing vector field with zeros on $B$, where the gradient is taken with respect to the metric on $B$.  \end{proposition}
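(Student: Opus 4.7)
My plan is to treat well-definedness and surjectivity separately. For well-definedness, $T$-invariance of the output is immediate: both $\omega$ and $\alpha$ are $T$-invariant by Corollary \ref{thm:isometries}, so $\scL_\alpha$ commutes with the $T$-action, and any $f \in \overline{\mft}$ is $T$-invariant since $\mft$ is abelian (so any $X \in \mft$ commutes with $J\nabla f$, forcing $X(f)$ to be constant, which vanishes after integration against the volume form). For real-valuedness, I would invoke the identity from the proof of Theorem \ref{thm:reductivity},
$$\scL_{\alpha}(\phi) - \overline{\scL_{\alpha}(\phi)} = g^{j\bar k}\bigl(\nabla_j F\,\nabla_{\bar k}\phi - \nabla_j\phi\,\nabla_{\bar k}F\bigr),$$
where $F = S(\omega) - \Lambda_\omega\alpha$. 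A short calculation shows this right-hand side is a directional derivative of $\phi$ along a multiple of $\mathrm{Im}(\nu) \in \mft$, hence vanishes by $T$-invariance of $\phi$.

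For surjectivity, I would first observe that $\scL'_\alpha$ is Fredholm: $\scL_\alpha$ is a fourth-order elliptic operator on a compact manifold, and $(\phi, f) \mapsto -f$ has finite-dimensional range. It therefore suffices to show that the $L^2$-cokernel vanishes. Suppose $\eta \in C^{0,\alpha}(B, \mathbb{R})^T$ pairs trivially with the image of $\scL'_\alpha$. Taking $\phi = 0$ gives $\eta \perp_{L^2}\overline{\mft}$, while taking $f = 0$ and using equation \eqref{eq:needs-conj} together with the real-valuedness established above yields that $\scL_\alpha$ is formally self-adjoint on $C^\infty(B, \mathbb{R})^T$ (the right-hand side of \eqref{eq:needs-conj}, paired against real $T$-invariant functions, equals its own conjugate, which is $\int_B \scL_\alpha(\psi)\phi\omega^n$). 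Hence $\scL_\alpha(\eta) = 0$, and by elliptic regularity $\eta$ is smooth.

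The crucial remaining step is to identify $\ker\scL_\alpha \cap C^\infty(B, \mathbb{R})^T$ with $\overline{\mft}$. By Proposition \ref{automs-prop}, such an $\eta$ corresponds via $\mu_\eta = g^{j\bar k}\partial_{\bar k}\eta\,\partial_j$ to an element of $\mfp$. $T$-invariance of $\eta$, combined with the fact that elements of $\mft$ are Killing (so their flows preserve both $g$ and $\eta$), gives $[X, \mu_\eta] = 0$ for all $X \in \mft$, so $\mu_\eta$ lies in the centralizer of $\mft_{\mathbb{C}} := \mft \oplus i\mft$ inside $\mfp$. By the decomposition of Theorem \ref{thm:reductivity} this centralizer is $\mft_{\mathbb{C}}$ itself (any element of $\mfp^\lambda$ with $\lambda > 0$ fails to commute with $\nu \in i\mft$), and reality of $\eta$ forces $\mu_\eta \in i\mft$, which is precisely the condition that $J\nabla\eta$ is a Killing vector field in $\mft$, i.e., $\eta \in \overline{\mft}$. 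The reverse inclusion is immediate from the definitions. Hence the cokernel equals $\overline{\mft} \cap \overline{\mft}^{\perp_{L^2}} = \{0\}$, and $\scL'_\alpha$ is surjective.

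The main obstacle will be this kernel identification, which requires carefully combining Proposition \ref{automs-prop} with Theorem \ref{thm:reductivity} and reconciling the two descriptions of $\overline{\mft}$ (as purely imaginary complex holomorphy potentials earlier in the paper, versus as real functions $f$ with $J\nabla f$ Killing in the statement of this Proposition). The remaining steps are routine applications of Fredholm theory for elliptic operators on compact manifolds.
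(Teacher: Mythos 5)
Your proof is correct and follows essentially the same route as the paper's: real-valuedness via the vanishing of the derivative of $T$-invariant functions along the extremal Killing field, surjectivity via self-adjointness of $\scL_{\alpha}$ (equation \eqref{eq:needs-conj}) and Fredholm index zero, and the identification of the real $T$-invariant kernel with $\overline{\mft}$ via Proposition \ref{automs-prop}/\ref{prop:fibre-automs}. The only difference is cosmetic: where the paper concludes directly that a real holomorphy potential in the kernel has $J\nabla\eta$ Killing, you route through the centralizer of $\mft_{\C}$ in the decomposition of Theorem \ref{thm:reductivity}, which is a valid (and slightly more careful) way to pin the vector field inside the fixed maximal torus.
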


\begin{proof} By well-defined, we mean that $\scL'_{\alpha}$ sends real valued torus invariant functions to real valued torus invariant functions. The torus invariance is an obvious consequence of all the data involved being invariant. To prove that $\scL'_{\alpha}(\phi)$ is real if $\phi$ is real, it is enough to prove the same statement for $\scL_{\alpha}$. 

We argue as in Arezzo-Pacard-Singer \cite[p16]{APS}. Let $v=v_{S(\omega)-\Lambda_{\omega}\alpha}=J\nabla (S(\omega)-\Lambda_{\omega}\alpha)$ denote the real holomorphic vector field induced by the extremal vector field. Using equation \eqref{explicitG}, as in \cite{APS} the operator $\scL_{\alpha}$ can be written as $$ \scL_{\alpha}(\phi) =(\Delta^2 \phi + R^{\bar k j} \nabla_j \nabla_{\bar k} \phi  -  \langle i \partial \overline{\partial} \phi, \alpha \rangle) -Jv(\phi)+iv(\phi).$$ Since $\psi$ is assumed to be invariant under $T$, and hence invariant under $v$, we have $v(\psi)=0$, leaving a real operator as claimed.


Note that the operator $\scL_{\alpha}$ is self-adjoint by equation \eqref{eq:needs-conj}: we have $$\int_B \psi \scL_{\alpha} (\phi) \omega_B^n = \int_B \scL_{\alpha}(\psi)  \phi \omega_B^n.$$ Thus the image of $\scL_{\alpha}$  is orthogonal to its kernel. Since it has Fredholm index zero by self-adjointness, it suffices to show that its kernel is simply $\overline \mft$ to prove the result.

Thus we suppose $\scL_{\alpha}(\phi)=0$, with $\phi$ a real function. Then by Proposition \ref{prop:fibre-automs}, we know that $\phi$ is a holomorphy potential for some holomorphic vector field which preserves the map. Since $\phi$ is a \emph{real} holomorphy potential, this vector field must be $J$ of a Killing field, proving the result. 

 \end{proof}
 
 \begin{remark}  In fact the observation that $\scL_{\alpha}$ is real valued is a rather general moment map phenomenon, related to work of Kirwan in the finite dimensional GIT setting \cite[Corollary 6.11]{kirwan}. Without restricting to torus invariant functions, the above Proposition has no hope of being true in general. This is the reason we were required to prove a twisted extremal metric is invariant under a maximal torus, as this allows us work with torus invariant functions above.
 
 \end{remark}

\section{Approximate solutions}\label{sec:approx}

The goal of this section is to construct an approximate extremal metric on $X$ in the class $H+rL$ using an inductive approach. Recall that $H$ is a line bundle on $X$ which is relatively ample over $B$ and such that $\Aut(X_b)$ is discrete for all $b\in B$, and $H$ is an ample line bundle on $B$. Moreover we assume that $\omega_0$ restricts to a cscK metric on each $(X_b,H_b)$ and that $\omega_B \in c_1(H)$ is a twisted extremal metric on $B$, where the twist is given by the Weil-Petersson metric induced by the cscK metrics on the fibres of the map $X\to B$. We also assume the extremal vector field on $B$ lifts, which simply means that it is an element of $\mfp$. For our inductive approach, we fix $r$ and begin with the K\"ahler metric $\omega_r = \omega_0 + r\omega_B$ on $X$. By taking $r$ large enough, we can assume $\omega_0$ is actually ample.  Recall that $\omega_B$ is invariant under a maximal (compact) torus of automorphisms $T\subset \Aut(p)$. 

\begin{theorem}\label{thm:approx-soln} In what follows, all data is taken to be invariant under the natural lift of the torus $T$ to $X$ described in Section \ref{sec:lifting}. For all $p \geq 0$, there exist functions $f_0,\hdots, f_{p-1} \in C^{\infty}(B),\   l_0,\hdots,l_p \in C^{\infty}_0(X)$ with $$\varphi_p = \sum_{i=0}^{p-1} f_ir^{-i+1}, \quad \lambda_p 
= \sum_{i=0}^p l_ir^{-i}$$ holomorphy potentials $\beta_p = \sum_{i=0}^p b_ir^{-i}$ with respect to  $\omega_r$ such that the K\"ahler metric $$\omega_{r,p} = \omega_r + \ddb \varphi_p+\ddb \lambda_p$$ satisfies
$$ S(\omega_{r,p}) - \frac{1}{2}\langle \nabla_{\omega_r} \beta_p,\nabla_{\omega_r} (\varphi_p +\lambda_p)\rangle - \beta_p = O(r^{-p-1}).$$ \end{theorem}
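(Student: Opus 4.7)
The plan is an inductive construction on $p$. At each inductive step I determine the new coefficients $f_{p-1}$, $l_p$ and $b_p$ by solving two decoupled linear PDEs: a fibrewise equation on the fibres $X_b$ and a twisted equation on the base $B$.

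For the base case $p=0$, I set $l_0 = 0$ and take $b_0$ to be the constant fibrewise scalar curvature of $\omega_b$, which is independent of $b\in B$ by Remark~\ref{fibre-independence}. Fine's adiabatic expansion for $\omega_r = \omega_0 + r\omega_B$ gives $S(\omega_r) = b_0 + O(r^{-1})$, and since $\varphi_0 = \lambda_0 = 0$ makes the cross-term vanish, the required bound $S(\omega_r)-b_0 = O(r^{-1})$ follows.

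For the inductive step, denote the LHS of the equation at level $p-1$ by $F_{p-1}$, so that by hypothesis $F_{p-1} = r^{-p} E_p + O(r^{-p-1})$ for some smooth $T$-invariant function $E_p$ on $X$. Decompose $E_p = E_p^0 + E_p^B$ using the splitting $C^\infty(X)\cong C^\infty_0(X) \oplus C^\infty(B)$ of Section~\ref{subsec:fibrations}; both pieces remain $T$-invariant. I then read off the contributions of the three new corrections to the LHS at order $r^{-p}$: the vertical correction $\ddb(l_p r^{-p})$ contributes $-r^{-p} \mathcal{D}_{\omega_b}^* \mathcal{D}_{\omega_b} l_p$ through the unscaled fibrewise Lichnerowicz operator; the horizontal correction $\ddb(f_{p-1} r^{-p+2})$ contributes $-r^{-p}\scL_\alpha f_{p-1}$, since the full Lichnerowicz on base potentials scales as $r^{-2}$ (from the horizontal inverse metric scaling like $r^{-1}$) and Fine's expansion of the Ricci form of $\omega_r$ supplies the $-\alpha$ contribution upgrading the base Lichnerowicz to the twisted operator $\scL_\alpha$; and $b_p r^{-p}$ subtracts $r^{-p} b_p$ from the LHS. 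The remaining new contributions—cross-terms of the new corrections with the lower order $\beta_{p-1}$, $\varphi_{p-1}$, $\lambda_{p-1}$, together with higher Taylor terms of the non-linear scalar curvature operator—are of order $O(r^{-p-1})$ and so can be ignored at this stage.

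To cancel $E_p$ at order $r^{-p}$ I solve the two equations separately. For the vertical equation $\mathcal{D}_{\omega_b}^* \mathcal{D}_{\omega_b} l_p = E_p^0$, the fibrewise Lichnerowicz is invertible on fibre-mean-zero functions because each fibre has discrete automorphism group and admits a cscK metric; fibrewise smoothness of the data produces $l_p \in C^\infty_0(X)^T$. For the base equation $\scL_\alpha f_{p-1} + b_p = E_p^B$, which is $\scL'_\alpha(f_{p-1}, -b_p) = E_p^B$ in the notation of Proposition~\ref{torus-lemma}, I invoke surjectivity of $\scL'_\alpha$ from that proposition to produce $(f_{p-1}, b_p) \in C^{4,\alpha}(B,\R)^T \times \bar\mft$, with elliptic regularity giving smoothness.

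The main obstacle is the base equation: $\scL_\alpha$ has non-trivial kernel—by Proposition~\ref{automs-prop} exactly the holomorphy potentials of vector fields preserving $p$—so one cannot simply invert it. The freedom to choose $b_p \in \bar\mft$, which is available precisely because we are searching for a twisted \emph{extremal} rather than cscK metric, is what resolves this, via the surjectivity of $\scL'_\alpha$ in Proposition~\ref{torus-lemma}. This is also where the hypothesis that the extremal vector field lifts to $X$ enters: through the identification $\mfp \cong \mfh$ of Proposition~\ref{prop:fibre-automs}, $b_p \in \bar\mft$ corresponds to a genuine holomorphy potential on $X$ for $\omega_r$, so $\beta_p$ remains a valid holomorphy potential on the total space as required by the statement.
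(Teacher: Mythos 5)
Your overall strategy coincides with the paper's: induct on $p$, split the order-$r^{-p}$ error using $C^{\infty}(X)\cong C^{\infty}_0(X)\oplus C^{\infty}(B)$, kill the vertical part with the fibrewise Lichnerowicz operator (invertible on $C^{\infty}_0(X)$ since the fibres are cscK with discrete automorphisms) and the base part via the surjectivity of $\scL'_{\alpha}$ from Proposition \ref{torus-lemma}, with the freedom in $b_p\in\overline{\mathfrak{t}}$ absorbing the cokernel; your explanation of where the lifting hypothesis enters is also correct. However, your order-$r^{-p}$ bookkeeping contains two concrete errors. The first: the cross-term $\tfrac{1}{2}\langle\nabla_{\omega_r}\beta_{p-1},\nabla_{\omega_r}(f_{p-1}r^{-p+2})\rangle$ is \emph{not} $O(r^{-p-1})$. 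Since $b_0$ is constant, $\beta_{p-1}=b_0+r^{-1}b_1+O(r^{-2})$, and by Lemma \ref{lem:subdominant} the inner product of two base gradients costs only a single factor of $r^{-1}$, so this term equals $\tfrac{1}{2}r^{-p}\langle\nabla_{\omega_B}b_1,\nabla_{\omega_B}f_{p-1}\rangle_{\omega_B}+O(r^{-p-1})$. Correspondingly, the contribution of $\ddb(f_{p-1}r^{-p+2})$ to the scalar curvature at order $r^{-p}$ is governed, after fibre integration, not by $\scL_{\alpha}$ alone but by the full twisted extremal linearisation $D_{\Omega_B}=\scL_{\alpha}+\tfrac{1}{2}\langle\nabla(S(\omega_B)-\Lambda_{\omega_B}\alpha),\nabla(\cdot)\rangle$ (Proposition \ref{prop:inductive}(ii)). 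These two omissions happen to cancel against each other in the equation being solved, so your base equation for $(f_{p-1},b_p)$ comes out correct, but each individual claim is false and the cancellation must be exhibited, not assumed.

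The second error does not self-correct. The order-$r^{-p}$ contribution of $\ddb(f_{p-1}r^{-p+2})$ to $S$ is a function on $X$ of the form $D_2(\pi^*f_{p-1})$; Proposition \ref{prop:inductive}(ii) controls only its \emph{fibre integral}. It has in general a nonzero fibre-mean-zero component $\Theta'_p\in C^{\infty}_0(X)$, which must be placed on the right-hand side of the vertical equation: one has to solve the base equation first and then solve $\scL_0 l_p=-E_p^0-\Theta'_p$, so the two equations are not decoupled as you claim. With your $l_p$ solving only the equation with source $E_p^0$, the term $r^{-p}\Theta'_p$ survives and the asserted $O(r^{-p-1})$ bound fails. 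This is repairable within your framework, since the fibrewise Lichnerowicz operator is still invertible on $C^{\infty}_0(X)$, but as written the inductive step does not close. Relatedly, your one-line dismissal of the ``higher Taylor terms'' is exactly where the paper invests effort (Proposition \ref{prop:inductive}): the new base correction $f_{p-1}r^{-p+2}$ is only an $O(r^{-p+1})$ perturbation in the rescaled base direction, and one needs the observation that the fibre integrals of $D_1$ and $D_2$ are unchanged by the lower-order potentials to see that no further order-$r^{-p}$ terms appear.
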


We prove this by induction on $p$. The main difficulty in constructing such a metric is controlling the linearisation of the scalar curvature and the twisted extremal operator. For the scalar curvature we can then apply the results of Fine \cite{fine1}. For the twisted extremal operator we apply the results of Section \ref{sec:lifting}, which allow us to identify the kernel of the linearisation. Another new difficulty occurring compared to Fine's work arises due to the fact are solving the \emph{extremal} equation rather than the cscK equation, leading to the inner products of gradients in the equation we wish to solve. In fact this is relatively straightforward to deal with, as these terms essentially do not affect the inductive step.

To handle the linearisation of the scalar curvature on the fibres, we introduce some notation. The form $\omega_0$ restricts to a K\"ahler metric on each fibre $X_b$, and hence one can define an operator $\scL_b$ to be the linearisation of the map $\Phi \to S(\omega_b + \ddb \Phi)$ for $\Phi \in C^{\infty}(X)$. These glue to form an operator $\scL_0$ which restricts to $\scL_b$ on $X_b$ for all $b\in B$. 

The relevant results from Fine's work that we shall appeal to are then the following:

\begin{proposition}\cite[Section 3]{fine1}\label{fine-prop}
\begin{enumerate}[(i)]
\item The scalar curvature of $\omega_r$ is given by $$S(\omega_r) = S(\omega_b) + r^{-1} \theta + O(r^{-2}),$$ where $\theta \in C^{\infty}(X)$ satisfies  $\int_{X/B} \theta\omega_0^m = S(\omega_B) - \Lambda_{\omega_B} \alpha.$
\item The linearisation of the fibrewise scalar curvature at $\omega_r$ is given by  $$\scL_r(\phi) =\scL_0(\phi) + O(r^{-1}).$$ 
\item Let $\Theta \in C^{\infty}_0(X)$. Then there exists a unique solution $\psi \in C^{\infty}_0(X)$ of $$ \scL_0(\psi) = \Theta.$$ If moreover $\omega_B$ is invariant under some maximal torus $T\subset \Aut(p)$, then torus invariance of $\psi$ implies torus invariance of $\Theta$.
\end{enumerate} 
\end{proposition}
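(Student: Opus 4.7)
The plan is to verify each of the three claims by an asymptotic expansion in powers of $r^{-1}$, together with an appeal to fibrewise elliptic theory. Throughout, I would work in the smooth splitting $TX \cong \scV \oplus \scH$ induced by the relatively K\"ahler metric $\omega_0$, and write $\omega_0 = \omega_b + \omega_H$ for its vertical and horizontal parts.

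For \emph{(i)}, I would expand both $\omega_r^n$ and $\Ric(\omega_r)\wedge \omega_r^{n-1}$ as polynomials in $r$. The volume form $\omega_r^n = (\omega_0 + r\omega_B)^n$ admits a leading term $r^{n-m}\binom{n}{m}\omega_b^m \wedge \omega_B^{n-m}$, with subleading corrections involving $\omega_H$. Writing $\Ric(\omega_r) = -\ddb \log \det(\omega_r)$ in coordinates adapted to the splitting, one sees that the leading term of $\det\omega_r$ is of the form $r^{n-m}\det(\omega_b)\det(\omega_B)$ (up to combinatorial factors), so the leading term of $\Ric(\omega_r)$ is $\rho_V + \Ric\omega_B$ modulo $O(r^{-1})$. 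Expanding the ratio $n\,\Ric(\omega_r)\wedge \omega_r^{n-1}/\omega_r^n$ then gives $S(\omega_b)$ as the leading constant term, which is indeed independent of $b$ by Remark \ref{fibre-independence}. To identify the fibre integral of the $r^{-1}$ correction $\theta$, I would use that the Weil--Petersson form $\alpha$ is defined precisely as the fibre integral of the horizontal component of $\rho$ (suitably normalised), so upon integrating $\theta$ against $\omega_0^m$ over a fibre, the horizontal Ricci contributes $-\Lambda_{\omega_B}\alpha$ and the base Ricci contributes $S(\omega_B)$, exactly as required by Lemma \ref{WP-alpha}.

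For \emph{(ii)}, the linearisation of the scalar curvature at a K\"ahler metric $\omega_r$ is $-\mathcal{D}_r^*\mathcal{D}_r$ plus a first-order piece involving $\nabla S(\omega_r)$. In the splitting $\scV\oplus \scH$, the horizontal derivatives acquire extra factors of $r^{-1}$ from the rescaled base direction, so only the fibrewise part of each term survives at leading order. Collecting powers of $r$ gives $\scL_r(\phi) = \scL_0(\phi) + O(r^{-1})$ where $\scL_0$ is the fibrewise Lichnerowicz operator. This is a direct coordinate expansion with no surprises.

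For \emph{(iii)}, the key point is that $\scL_b$ is the Lichnerowicz operator of the cscK metric $\omega_b$ on the fibre $(X_b,H_b)$. By the fundamental property of the Lichnerowicz operator recalled in Section \ref{subsec:extremal}, $\ker \scL_b$ consists of holomorphy potentials on $X_b$; since $\Aut(X_b,H_b)$ is discrete, these are just constants. Hence $\scL_b$ is a self-adjoint Fredholm operator of index zero, invertible on functions with fibre-mean zero. Since the data vary smoothly in $b$ and $B$ is compact, the fibrewise inverses glue to a bounded inverse $\scL_0^{-1}: C^\infty_0(X) \to C^\infty_0(X)$ by smooth dependence of elliptic inverses on parameters. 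The torus-equivariance statement is then immediate: $\scL_0$ is constructed from $T$-invariant data, so it preserves the subspace of $T$-invariant functions, and applying it to a $T$-invariant $\psi$ yields a $T$-invariant $\Theta$.

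I expect the main obstacle to be the bookkeeping in \emph{(i)}: tracking precisely which mixed $\scV$-$\scH$ terms in the expansion of $\Ric(\omega_r)\wedge\omega_r^{n-1}$ contribute to $\theta$, and matching them cleanly with the definition of the Weil--Petersson form so that the identity $\int_{X/B}\theta\,\omega_0^m = S(\omega_B) - \Lambda_{\omega_B}\alpha$ falls out. The analytic content of \emph{(ii)} and \emph{(iii)} is comparatively routine once the splitting is in place.
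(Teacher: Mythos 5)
Your proposal is correct in outline, but note that the paper does not carry out these computations at all: its proof is a one-line citation of \cite{fine1} for (i)--(iii), observing only that the torus-invariance clause in (iii) follows from the equivariance of Fine's construction (which is exactly your argument that $\scL_0$ preserves $T$-invariant functions). Your sketch is essentially a reconstruction of Fine's own proofs -- the expansion of $\Ric(\omega_r)$ and $\omega_r^n$ in the block splitting $\scV\oplus\scH$, the $r^{-1}$ suppression of horizontal contractions, and the fibrewise invertibility of the Lichnerowicz operator on mean-zero functions using discreteness of $\Aut(X_b,H_b)$ -- so it matches the approach of the cited source, with the remaining work being precisely the bookkeeping in (i) that you flag.
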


\begin{proof} These are all proven in \cite{fine1} except the automorphism invariance, which is a simple corollary of Fine's construction. \end{proof}

This provides the zeroth step of our argument, precisely because we are assuming the extremal vector field on $B$ lifts, i.e. is an element of $\mfp$.

\begin{corollary}
Theorem \ref{thm:approx-soln} holds for $p=0$.
\end{corollary}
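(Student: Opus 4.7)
The base case is essentially an observation given the results already collected. My plan is to make the simplest possible choices: set $\varphi_0 = 0$ (forced, being an empty sum), take $\lambda_0 = l_0 = 0$, and choose the holomorphy potential $\beta_0 = b_0$ to be the constant $S(\omega_b)$, which is well-defined and independent of $b \in B$ by Remark \ref{fibre-independence}. A constant function is trivially a holomorphy potential with respect to $\omega_r$—the corresponding holomorphic vector field is zero—and is manifestly invariant under any group action, including the lifted torus $T$. The function $l_0 = 0$ lies in $C^{\infty}_0(X)$ and is of course $T$-invariant. The only remaining piece of data is the background metric $\omega_r = \omega_0 + r\omega_B$, whose $T$-invariance follows from $T$-invariance of $\omega_B$ (by Corollary \ref{thm:isometries}) and of $\omega_0$ (the latter because the $T$-action permutes fibres, the cscK metric on each fibre is unique since $\Aut(X_b,H_b)$ is discrete, and these cscK metrics glue to the fibrewise metric induced by $\omega_0$).

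With these choices, $\omega_{r,0} = \omega_r$, and the inner product $\frac{1}{2}\langle \nabla_{\omega_r} \beta_0, \nabla_{\omega_r}(\varphi_0 + \lambda_0)\rangle$ vanishes identically since $\nabla \beta_0 = 0$. The approximate twisted extremal equation that we need to verify thus collapses to
$$S(\omega_r) - S(\omega_b) = O(r^{-1}).$$
This is precisely the leading-order statement in Proposition \ref{fine-prop}(i), which gives the expansion $S(\omega_r) = S(\omega_b) + r^{-1}\theta + O(r^{-2})$; the remainder is visibly $O(r^{-1})$, which equals $O(r^{-p-1})$ for $p = 0$, as required.

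There is no real obstacle to surmount at this step: it simply records that $\omega_r$ is already approximately extremal to order $r^{-1}$ because every fibre carries a cscK metric of the same scalar curvature. The genuine difficulty lies in the inductive step $p \rightsquigarrow p+1$, where one must at each order solve an equation on the fibres using invertibility of the fibrewise Lichnerowicz operator $\scL_0$ (Proposition \ref{fine-prop}(iii)) and then push the remaining defect, which is constant along the fibres and hence pulled back from $B$, to the base, where it is solved using the twisted Lichnerowicz operator $\scL'_{\alpha}$ from Proposition \ref{torus-lemma}. The hypothesis that the extremal vector field on $B$ lifts to $X$ becomes essential only at those later stages, where the inductive correction to $\beta_p$ must be an honest holomorphy potential on $X$.
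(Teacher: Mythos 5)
Your proof is correct and matches the paper's own argument: the paper likewise sets $l_0 = 0$ and $b_0 = S(\omega_b)$ (constant by Remark \ref{fibre-independence}), with the $O(r^{-1})$ bound coming directly from the expansion in Proposition \ref{fine-prop}(i). Your additional remarks on torus invariance are consistent with the paper, which simply notes that invariance is automatic since all initial data is invariant.
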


\begin{proof} Set $l_0 = 0$ and $b_0 = S(\omega_b)$. There are no $f_i$-terms in this case. \end{proof}

Note that the scalar curvature $S(\omega_b)$ is a constant independent of $b$ by Remark \ref{fibre-independence}. The result for $p=1$ follows from the following simple lemma, analogous to \cite[Lemma 4.12]{lu-seyyedali}. 

\begin{lemma}\label{lem:subdominant}
Let $\eta\in  C^{\infty}(B), \psi \in C^{\infty}(X)$. Then $$\langle \nabla_{\omega_r} \eta,\nabla_{\omega_r} \psi \rangle = O(r^{-1}).$$ If moreover $\psi \in C^{\infty}(B)$, then $$\langle \nabla_{\omega_r} \eta,\nabla_{\omega_r} \psi \rangle =r^{-1}\langle \nabla_{\omega_B} \eta,\nabla_{\omega_B} \psi \rangle +  O(r^{-2}).$$
\end{lemma}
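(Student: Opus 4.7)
The plan is to exploit the horizontal-vertical decomposition $TX \cong \scV \oplus \scH$ induced by $\omega_0$ from Section \ref{subsec:fibrations}, and observe that in this splitting the metric $\omega_r$ becomes block diagonal, so that the asymptotics of its inverse become transparent. The guiding principle is that $d\eta$ is horizontal whenever $\eta$ is pulled back from $B$, so only the horizontal block of $g_r^{-1}$ contributes to the pairing, and this block is of order $r^{-1}$.

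First I would record that in the $\omega_0$-adapted splitting $\scV \oplus \scH$ there are no mixed vertical/horizontal components of $\omega_0$, while $\omega_B$ is purely horizontal since it is a pullback from $B$. Writing $g_r = g_{0,V} \oplus (g_{0,H} + r g_B)$ and inverting, a geometric series expansion yields
\[
(g_{0,H} + r g_B)^{-1} \;=\; r^{-1} g_B^{-1} \;-\; r^{-2} g_B^{-1} g_{0,H} g_B^{-1} \;+\; O(r^{-3}),
\]
so the horizontal block of $g_r^{-1}$ has size $O(r^{-1})$ with leading coefficient $g_B^{-1}$, while the vertical block $g_{0,V}^{-1}$ is uniformly bounded in $r$.

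The next step is the observation that when $\eta \in C^{\infty}(B)$ is pulled back to $X$, the differential $d\eta$ annihilates every vertical tangent vector and therefore lies entirely in the horizontal cotangent bundle. Since $\langle \nabla_{\omega_r}\eta, \nabla_{\omega_r}\psi\rangle_{\omega_r} = g_r^{-1}(d\eta, d\psi)$, the block-diagonal structure forces only the horizontal block of $g_r^{-1}$, applied to $d\eta$ and to the horizontal component of $d\psi$, to contribute. Both $d\eta$ and the horizontal part of $d\psi$ are smooth and uniformly bounded in $r$, so the $O(r^{-1})$ bound on the horizontal block of $g_r^{-1}$ immediately yields the first claim.

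For the refined statement when $\psi \in C^{\infty}(B)$, the differential $d\psi$ is itself purely horizontal, so no information is lost by projecting to $\scH$, and the leading term of the expansion gives
\[
\langle \nabla_{\omega_r}\eta, \nabla_{\omega_r}\psi\rangle \;=\; r^{-1} g_B^{-1}(d\eta, d\psi) + O(r^{-2}) \;=\; r^{-1} \langle \nabla_{\omega_B}\eta, \nabla_{\omega_B}\psi\rangle + O(r^{-2}),
\]
as required. No serious obstacle arises; the only bookkeeping point is to verify at each order that mixed vertical/horizontal contributions vanish, which the $\omega_0$-adapted splitting handles automatically.
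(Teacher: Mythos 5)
Your proof is correct. The paper itself gives no proof of this lemma --- it is stated as a ``simple lemma, analogous to [Lu--Seyyedali, Lemma 4.12]'' and used immediately --- so there is nothing to compare against, but your argument is precisely the standard computation that justifies it: since $\scH$ is defined as the $\omega_0$-orthogonal complement of $\scV$ and $\pi^*\omega_B$ annihilates vertical vectors, $\omega_r$ is block diagonal in the splitting $\scV\oplus\scH$, the differential of a pulled-back function lies in the annihilator of $\scV$, and the Neumann-series expansion of the horizontal block $(g_{0,H}+rg_B)^{-1}$ gives both asymptotics (uniformly, by compactness of $X$). Two minor points worth being aware of: the paper's convention is $\langle\nabla\eta,\nabla\psi\rangle_{\omega}=2g^{j\bar k}\partial_j\eta\,\partial_{\bar k}\psi$, so your pairing differs by a harmless factor of $2$ that cancels on both sides of the second identity; and your expansion does not require $g_{0,H}$ to be positive (it need not be, as $\omega_0$ is only relatively K\"ahler), only that $g_B$ is positive on $\scH$, which your argument implicitly and correctly uses.
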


\begin{corollary}
Theorem \ref{thm:approx-soln} holds for $p=1.$
\end{corollary}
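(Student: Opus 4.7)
The plan is to solve the order-$r^{-1}$ term of the extremal equation, using Fine's asymptotic results together with the lifting established in Proposition \ref{prop:fibre-automs}. The natural choice at this step is $f_0 = 0$, since $\omega_B$ is already twisted extremal so no base perturbation is required to leading order; then $\varphi_1 = 0$ and $\omega_{r,1} = \omega_r + r^{-1}\ddb l_1$. Combining Proposition \ref{fine-prop}(i) and (ii) gives
\begin{equation*}
S(\omega_{r,1}) = S(\omega_b) + r^{-1}\bigl(\theta - \scL_0 (l_1)\bigr) + O(r^{-2}),
\end{equation*}
while the inner-product term $\tfrac{1}{2}\langle\nabla \beta_1, \nabla(\varphi_1 + \lambda_1)\rangle = \tfrac{1}{2} r^{-2}\langle\nabla b_1, \nabla l_1\rangle$ is $O(r^{-2})$ by Lemma \ref{lem:subdominant} and therefore does not enter the balance at order $r^{-1}$. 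Since $b_0 = S(\omega_b)$, matching the $r^{-1}$ coefficients reduces the problem to the single equation $\theta - \scL_0(l_1) - b_1 = 0$ on $X$.

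To choose $b_1$, I would invoke Proposition \ref{prop:fibre-automs}: since $\nu \in \mfp$ by hypothesis, there is a lift $\tilde\nu \in \mfh$, and I take $b_1$ to be the $\omega_r$-holomorphy potential of $\tilde\nu$, normalised so that its fibre average equals $f_\nu = S(\omega_B) - \Lambda_{\omega_B}\alpha$ (the twisted extremal potential of $\omega_B$, a holomorphy potential of $\nu$ on $(B,\omega_B)$). This automatically makes $\beta_1 = b_0 + b_1 r^{-1}$ a holomorphy potential on $(X,\omega_r)$. By Fine's Proposition \ref{fine-prop}(i), $\int_{X/B}\theta\,\omega_0^m = S(\omega_B) - \Lambda_{\omega_B}\alpha = f_\nu$, so $\theta - b_1$ has vanishing fibre integral, i.e.\ lies in $C^\infty_0(X)$. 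Then Fine's Proposition \ref{fine-prop}(iii) yields a unique $T$-invariant $l_1 = \scL_0^{-1}(\theta - b_1) \in C^\infty_0(X)$, with torus invariance of $l_1$ inherited from that of $\theta$ and $b_1$.

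The main conceptual point, and the place where the hypotheses of this paper are used most directly, is the choice of $b_1$: this function must simultaneously be a holomorphy potential on $(X,\omega_r)$ \emph{and} have fibre integral equal to $f_\nu$, so that $\scL_0$-invertibility can be applied to the remainder. The lifting hypothesis $\nu \in \mfp$, together with the identification $\mfh \cong \mfp$ provided by Proposition \ref{prop:fibre-automs}, is precisely what allows both conditions to be met simultaneously; without this one could exhibit a function with either property but not both, and the induction would not even start.
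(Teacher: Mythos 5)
Your proof is correct and follows essentially the same route as the paper: take $f_0=0$, let $b_1$ be the potential of the lifted extremal field with leading term $S(\omega_B)-\Lambda_{\omega_B}\alpha$, invert $\scL_0$ on the fibrewise-mean-zero part of $\theta$ to obtain $l_1\in C^\infty_0(X)$, and use Lemma \ref{lem:subdominant} to discard the gradient term. The only quibble is that the fibre average of the genuine $\omega_r$-holomorphy potential of $\tilde\nu$ equals $f_\nu$ only up to an $O(r^{-1})$ error (exact equality cannot be arranged by a constant shift since the fibre average is a function on $B$), but this discrepancy enters the equation at order $r^{-2}$ and is harmless at this stage.
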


\begin{proof} Applying Proposition \ref{fine-prop} $(iii)$ to $\int_{X/B} \theta \omega_0^m -\theta$ gives a function $l_1 \in C^{\infty}_0(X)$. Let $b_1$ be the holomorphy potential of the extremal vector field $\nu$ on $B$, i.e. $$S(\omega_B) - \Lambda_{\omega_B} \alpha = b_1,$$  so that  $\beta_1 = r^{-1}b_1$. Then the result follows from Lemma \ref{lem:subdominant} with $f_0=0$. 
\end{proof}

For $p=2$ we begin the inductive argument. The new difficulty is that we are no longer at a \emph{genuine} cscK metric, and so one needs to vary the arguments. This requires some new techniques and new notation, for which we follow \cite[Section 3.3]{fine1}.

Let $\Omega_0 \in c_1(L)$ be relatively K\"ahler metric, with restriction $\Omega_b$ to a fibre, and let $\Omega_B \in c_1(H)$ be a K\"ahler metric on $B$. Denote by $\scL_{\Omega_0}$ the operator which glues the linearisation of the scalar curvature of the restriction $\Omega_b$ for all $b$. Let $\widetilde{\Omega}_r = \Omega_0 + r\Omega_B$, and let $\Omega_r = \widetilde{\Omega}_r + \ddb ( \phi_r )$, where $$\phi_r = \sum_{j=1}^d \psi_j r^{-j},$$ for functions $\psi_j \in C^{\infty}(X)$.  With this notation in place, we require the following, which is very similar to and mostly contained in the analogous work of Fine \cite[Section 3.3]{fine1}. 

\begin{proposition}\label{prop:inductive} The linearisations satisfy the following properties.
\begin{enumerate}[(i)]
\item Letting  $\scL_{\Omega_r}$ denote the linearisation of the scalar curvature at $\Omega_r$, we have $$\scL_{\Omega_r} = \scL_{\Omega_0}+ r^{-1}D_1 + r^{-2}D_2 + O(r^{-3}),$$ where $D_1$ only depends on $\psi_1$ and $D_2$  only depends on $\psi_1$ and $\psi_2$.
\item For a function $\pi^* f$ pulled back from $B$ we have  $$ D_1(\pi^* f) = 0, \quad \int_{X/B} D_2( \pi^* f) \Omega_0^m = D_{\Omega_B}(f),$$ where  $$D_{\Omega_{B}} = \scL_{\alpha} + \frac{1}{2} \langle \nabla \big( S(\Omega_B) - \Lambda_{\Omega_B} (\alpha) \big), \nabla \big( \cdot \big) \rangle .$$
\end{enumerate}
\end{proposition}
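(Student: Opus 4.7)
The plan is to expand the scalar curvature operator $S$ and its linearisation $\scL_{\Omega_r} = -\mathcal{D}^*_{\Omega_r}\mathcal{D}_{\Omega_r} + \tfrac{1}{2}\langle\nabla S(\Omega_r), \nabla(\cdot)\rangle_{\Omega_r}$ in inverse powers of $r$, following closely the asymptotic framework of Fine \cite[Section 3.3]{fine1}. The extra gradient term is harmless at leading order and only perturbs the structure at the subdominant orders that we need to control; otherwise the calculation is a direct adaptation of Fine's work from the cscK to the twisted extremal setting.

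For part (i), I would begin by expanding the inverse of $\widetilde{\Omega}_r = \Omega_0 + r\Omega_B$: its vertical part equals $\Omega_0^{-1}$ plus $O(r^{-1})$ and its horizontal part is of size $r^{-1}$. Adding the perturbation $\ddb\phi_r$ with $\phi_r = \sum_j \psi_j r^{-j}$ modifies the inverse metric so that each $\ddb\psi_j$ enters only at order $r^{-j}$ or smaller. Substituting into the coordinate expression for $\scL_{\Omega_r}$ and regrouping yields the expansion $\scL_{\Omega_r} = \scL_{\Omega_0} + r^{-1}D_1 + r^{-2}D_2 + O(r^{-3})$; at leading order only the vertical fourth-order derivatives survive (the horizontal ones are pushed down by $r^{-2}$), producing the fibrewise Lichnerowicz operator $\scL_{\Omega_0}$. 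The statement that $D_1$ depends only on $\psi_1$ and $D_2$ only on $\psi_1, \psi_2$ is then a power-counting consequence: any coefficient to which $\ddb \psi_j$ contributes appears at order $r^{-j}$ or below.

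For part (ii), $\scL_{\Omega_0}(\pi^*f) = 0$ holds trivially since $\pi^*f$ is constant along fibres while $\scL_{\Omega_0}$ only involves vertical derivatives. The vanishing $D_1(\pi^*f) = 0$ is established in the same way, by checking that every order-$r^{-1}$ contribution either involves a vertical derivative of $\pi^*f$ (hence vanishes) or needs a second horizontal derivative paired with a horizontal inverse metric, which pushes it down to order $r^{-2}$ at best. To compute $\int_{X/B} D_2(\pi^*f)\,\Omega_0^m$ I would collect three contributions at order $r^{-2}$: the purely horizontal fourth-order part of $-\mathcal{D}^*_{\Omega_r}\mathcal{D}_{\Omega_r}$ assembles, after fibrewise integration, into $-\mathcal{D}^*_{\Omega_B}\mathcal{D}_{\Omega_B}(f)$ (up to the fibre volume factor); the horizontal Ricci term $R^{\bar k j}\nabla_j\nabla_{\bar k}$ applied to $\pi^*f$ involves the fibrewise Ricci form $\rho$, whose horizontal pushforward is $-\alpha$ by the definition in Section \ref{subsec:fibrations}, and this produces the twist term $\langle \ddb f, \alpha\rangle_{\Omega_B}$; and the first-order term $\tfrac{1}{2}\langle\nabla S(\Omega_r), \nabla \pi^*f\rangle$, using Proposition \ref{fine-prop}(i) together with $\int_{X/B}\theta\,\omega_0^m = S(\omega_B) - \Lambda_{\omega_B}\alpha$ and Lemma \ref{lem:subdominant}, contributes $\tfrac{1}{2}\langle\nabla(S(\Omega_B) - \Lambda_{\Omega_B}\alpha), \nabla f\rangle_{\Omega_B}$. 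Combining these and comparing with the definition of $\scL_\alpha$ from Section \ref{sec:lifting} delivers $D_{\Omega_B}(f)$ exactly.

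The main technical obstacle is the second contribution: showing carefully that the horizontal component of the Ricci curvature of the full metric $\Omega_r$, when integrated fibrewise against $\omega_0^m$, produces precisely $\alpha$ paired with $\ddb f$ rather than some other multiple or related form. This step relies on both the definition of $\alpha$ as the pushforward of $\rho_H$ and its identification with the Weil--Petersson metric (Lemma \ref{WP-alpha}), together with the Leibniz-type fibre integration identities that eliminate divergence terms. The analogous computation in the untwisted cscK case is precisely what Fine carries out in \cite[Section 3.3]{fine1}; the only genuinely new ingredient in our setting is the retention of the gradient-of-scalar-curvature term, which by Proposition \ref{fine-prop}(i) slots cleanly into the expansion and produces the expected twisted extremal correction.
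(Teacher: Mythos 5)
Your route is a direct coordinate expansion of the linearised operator, which is genuinely different from the paper's argument and, as written, has a gap in part (ii). The target operator is
\[
D_{\Omega_B}(f) \;=\; \scL_{\alpha}(f) + \tfrac{1}{2}\langle \nabla(S(\Omega_B)-\Lambda_{\Omega_B}\alpha),\nabla f\rangle \;=\; -\mathcal{D}^*_{\Omega_B}\mathcal{D}_{\Omega_B}(f) + \langle i\partial\bar\partial f,\alpha\rangle + \tfrac{1}{2}\langle \nabla S(\Omega_B),\nabla f\rangle,
\]
i.e.\ exactly the linearisation of $f\mapsto S(\Omega_B+i\partial\bar\partial f)-\Lambda_{\Omega_B+i\partial\bar\partial f}\alpha$. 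Your three named contributions sum instead to $-\mathcal{D}^*_{\Omega_B}\mathcal{D}_{\Omega_B}(f) + \langle i\partial\bar\partial f,\alpha\rangle + \tfrac{1}{2}\langle \nabla(S(\Omega_B)-\Lambda_{\Omega_B}\alpha),\nabla f\rangle$, which is short of the target by $\tfrac{1}{2}\langle \nabla\Lambda_{\Omega_B}\alpha,\nabla f\rangle$. That missing piece has to come from cross-terms you have not isolated: the first-order term $g^{j\bar k}\nabla_j S(\Omega_r)\nabla_{\bar k}(\cdot)$ inside $\mathcal{D}^*_{\Omega_r}\mathcal{D}_{\Omega_r}$ (whose $r^{-1}$-coefficient is $\theta$, not a constant), the subleading corrections to the horizontal inverse metric coming from $\Omega_0$, and the Christoffel corrections when the horizontal fourth-order part is matched against $\mathcal{D}^*_{\Omega_B}\mathcal{D}_{\Omega_B}$. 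You also flag the identification of the fibre-integrated horizontal Ricci term with $\langle i\partial\bar\partial f,\alpha\rangle$ as "the main technical obstacle" without carrying it out, so the hardest step of your plan is asserted rather than proved. The sign there is delicate too, since $\alpha$ is defined as $-$ the normalised pushforward of $\rho_H\wedge\omega_0^m$ while the Ricci term enters the linearisation with a minus sign.

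The paper avoids all of this bookkeeping with one observation: since $\widetilde{\Omega}_r + i\partial\bar\partial\pi^*f = \Omega_0 + r\bigl(\Omega_B + r^{-1}i\partial\bar\partial f\bigr)$, perturbing the total metric by $i\partial\bar\partial\pi^*f$ is the same as perturbing the \emph{base} metric by $r^{-1}i\partial\bar\partial f$. One then simply reapplies the already-established expansion $S(\widetilde{\Omega}_r)=S(\Omega_b)+r^{-1}\theta+O(r^{-2})$ with $\int_{X/B}\theta\,\Omega_0^m = S(\Omega_B)-\Lambda_{\Omega_B}\alpha$ to the rescaled base metric: the $f$-dependence first appears at order $r^{-2}$ (giving $D_1(\pi^*f)=0$ immediately), and the $r^{-2}$-coefficient of the fibre integral is by construction the linearisation of $f\mapsto S(\Omega_B+i\partial\bar\partial f)-\Lambda_{\Omega_B+i\partial\bar\partial f}\alpha$, i.e.\ $D_{\Omega_B}(f)$ on the nose. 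The general case ($\psi_j\neq 0$) is then handled by noting that the $r^{-1}$-term changes by $\scL_0(\psi_1)$, which is killed by pullbacks, and that the base component of $\phi_r$ only affects the fibre integral at order $r^{-3}$. I would strongly recommend replacing your direct expansion by this rescaling argument; if you insist on the computational route, you must exhibit the term supplying the missing $\tfrac{1}{2}\langle \nabla\Lambda_{\Omega_B}\alpha,\nabla f\rangle$ and actually prove the Ricci/pushforward identification rather than defer it.
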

From Proposition \ref{prop:linearisation-twisted} together with the definition of $\scL_{\alpha}$, the last point states that the base component of $D_2$ is the linearised operator of the twisted extremal operator with respect to the original metric $\Omega_B$ on the base, when making perturbations of the type that we are making. Note that while $D_1,D_2$ themselves depend on some of the $\psi_j$, their fibre integrals do not.

\begin{proof} We begin with the proof in the case that $\psi_j =0$ for $j = 1,\hdots,d$, so that $\Omega_r = \widetilde{\Omega}_r $. Recall from Proposition \ref{fine-prop} that before perturbing the metric, the scalar curvature of $\widetilde{\Omega}_r$ satisfies $$ S \big( \widetilde{\Omega}_r \big) = S(\Omega_b) + r^{-1} \theta + O(r^{-2}),$$ where $\theta \in C^{\infty}(X)$ satisfies  $\int_{X/B} \theta\Omega_0^m = S(\Omega_B) - \Lambda_{\Omega_B} \alpha.$ From this we see that the initial term in the expansion of the linearisation is simply the fibrewise linearisation. This shows the first statement in this case. 

For the second statement, note that for a function $f$ on $B$, we have $$\widetilde{\Omega}_r + \ddb \pi^* f = \Omega_0 + r \big( \pi^* \Omega_B + r^{-1} \ddb \pi^* f \big). $$ Since we are changing the base metric by $ r^{-1} \ddb f$ rather than $\ddb f$, this shows that the first non-zero component in the linearisation applied to such functions is the $D_2$-term. Moreover, the fibre integral of the linearisation of $\theta$ will then be the linearisation of the operator $$ f \mapsto S(\Omega_B + r^{-1} \ddb f ) - \Lambda_{\Omega_B+r^{-1} \ddb f} \alpha,$$ showing that the fibre integral of the $D_2$-operator is the operator $D_{\Omega_B}$, which proves the second claim in the case $\Omega_r = \widetilde{\Omega}_r $.

We now proceed to the general case. When perturbing the metric by $\phi_r$, the expansion of the scalar curvature changes, but we then use the linearisation as above to identify the new terms. There is no change in the initial term, since $\phi_r$ is of order $r^{-1}.$ Moreover, the change in the $r^{-1}$-term is $\mathcal{L}_0 (\psi_1)$. The linearisation of this term in the direction of a function pulled back from the base is $0$. This gives the statement about the $D_1$ operator. Similarly, there will be a change in the scalar curvature (and consequently the linearised operator) in the $r^{-2}$-term, which now involves both $\psi_1$ and $\psi_2$. However, the change in the fibre integral of the scalar curvature coming from the $C^{\infty}(B)$ component $\pi^*\Phi_r$ of $\phi_r $ will be of order $r^{-3}.$ This is because for this component, the change in fibre integral of the scalar curvature is $$r^{-2} D_{\Omega_B} ( \Phi_r ) + O(r^{-4}) = r^{-3} D_{\Omega_B} (  \Psi_1 ) + O(r^{-4}).$$ Here $\Psi_1 \in C^{\infty}( B,\mathbb{R})$ is the base component of $\psi_1$, namely its fibre integral. Thus there will be no change in the fibre integral of the operator $D_2$. This concludes the proof of the general case.
\end{proof}

This is all that is needed to construct the approximate solution for all $p$. 

\begin{proof}[Proof of Theorem \ref{thm:approx-soln}] Torus invariance is automatic as all of the initial data is torus invariant.

We suppose that an approximate solution has been produced for $p-1$, so we have $\varphi_{p-1},\lambda_{p-1}$ and holomorphy potentials $\beta_{p-1}$ such that the K\"ahler metric \begin{align*} \omega_{r,p-1} = \omega_{r}+ \ddb \varphi_{p-1}+\ddb \lambda_{p-1} = \omega_{r} +  \ddb \bigg( \sum_{i=0}^{p-2} f_ir^{-i+1}+ \sum_{i=0}^{p-1} l_ir^{-i} \bigg) \end{align*} satisfies
\begin{equation}\label{eq:pthstep} S(\omega_{r,p-1}) - \frac{1}{2}\langle \nabla_{\omega_{r}} \beta_{p-1},\nabla_{\omega_{r}} (\varphi_{p-1} +\lambda_{p-1})\rangle - \beta_{p-1} = O(r^{-p}).\end{equation}

For $f_{p-1} \in C^{\infty}(B)$, we have $$ S(\omega_{r,p-1} + \ddb f_{p-1} r^{-p+2}) =  S(\omega_{r,p-1}) + \left(\int_{X/B} D_2(f_{p-1}) \omega_{r,p-1}^m +\Theta_p\right)r^{-p} + O(r^{-p-1}),$$ where $\Theta_p\in C_0^{\infty}(X)$, with the fibre integral measured with respect to $\omega_{r,p-1}$. Noting $\omega_{r,p-1} = \omega_r + O(r^{-1})$, and using Proposition \ref{prop:inductive} to relate the fibre integral to $\scL_{\alpha}$, up to terms of $O(r^{-p-1})$ we have $$ S(\omega_{r,p-1} + \ddb f_{p-1} r^{-p+2}) =  S(\omega_{r,p-1}) + (\scL_{\alpha} f_{p-1} + \frac{1}{2} \langle \nabla b_1 , \nabla f_{p-1}  \rangle + \Theta'_p)r^{-p} + O(r^{-p-1}),$$ where $\Theta'_p \in C_0^{\infty}(X)$ measured with respect to $\omega_r$. This follows from Proposition \ref{prop:inductive} and the fact that $b_1 = S(\omega_B) - \Lambda_{\omega_B} (\alpha).$

Let $w_p$ be the $O(r^{-p})$ term of equation \eqref{eq:pthstep}. Write $w_p = \Psi_p + \Phi_p$, where $\Psi_p \in C^{\infty}(B)$ and $\Phi_p \in C^{\infty}_0(X)$. The operator $\scL'_{\alpha}(\phi,\gamma) = \scL_{\alpha}\phi - \gamma$ , where $\gamma$ is a holomorphy potential, with respect to $\omega_B$, of some holomorphic vector field $\nu_p\in \mfp$, is surjective by Proposition \ref{torus-lemma} (and recalling all data is torus invariant), so we can solve $\scL_{\alpha}f_{p-1} - \tilde b_p = -\Psi_p$ for $\tilde b_p$ a holomorphy potential with respect to $\omega_B$. Now the holomorphy potential for the vector field corresonding to $r^{-1} \nu_p$ on $X$ with respect to $\omega_r = r\omega_B + \omega_0$ is of the form $ b_p = \tilde b_p + O(r^{-1})$. With this choice we therefore have $$S(\omega_{r,p-1} + \ddb f_{p-1} r^{-p+2}) =  S(\omega_{r,p-1}) + \left(b_p + \frac{1}{2} \langle \nabla b_{1},\nabla f_{p-1} \rangle -\Psi_p + \Theta'_p\right)r^{-p} + O(r^{-p-1}).$$ Note that $b_p$ is certainly real valued, and the corresponding vector field certainly vanishes on $X$ since it vanishes on $B$, thus $b_p$ is of the desired form.

Continuing with functions from the base, since $\varphi_p = \varphi_{p-1}+f_{p-1} r^{-p+2}$, we have 
\begin{align*} \langle \nabla_{\omega_r} \beta_p,\nabla_{\omega_r} \big( \varphi_p + \lambda_{p-1} \big) \rangle =& \langle \nabla_{\omega_r} \beta_{p-1},\nabla_{\omega_r} \big( \varphi_{p-1} + \lambda_{p-1} \big)  \rangle +  \langle \nabla_{\omega_r} r^{-p} b_p ,\nabla_{\omega_r} \big( \varphi_{p} + \lambda_{p-1} \big)  \rangle   \\
&+\langle \nabla_{\omega_r} \beta_{p-1},\nabla_{\omega_r} f_{p-1} r^{-p+2} \rangle \\
=&  \langle \nabla_{\omega_r} \beta_{p-1},\nabla_{\omega_r} \big( \varphi_{p-1} + \lambda_{p-1} \big)  \rangle + r^{-p} \langle \nabla_{\omega_B} b_{1},\nabla_{\omega_B} f_{p-1} \rangle_{\omega_B} \\
& + O ( r^{-p-1}),
\end{align*}
where we have used the expansion of $\phi_p, \lambda_{p-1}, \beta_p$ and Lemma \ref{lem:subdominant}.

Finally we choose $l_p$. We have \begin{align*}S(\omega_{r,p-1} +\ddb f_{p-1} r^{-p+2} + & \ddb l_pr^{-p}) =  S(\omega_{r,p-1}) \\ & + \left(b_p + \frac{1}{2} \langle \nabla b_{1},\nabla f_{p-1} \rangle - \Psi_p+ \Theta'_p + \scL_0 l_p\right)r^{-p} + O(r^{-p-1}).\end{align*} We can solve uniquely $\scL_0 l_p = -\Theta_p' - \Phi_p$, giving $$S(\omega_{r,p-1} +\ddb f_{p-1} r^{-p+2} + \ddb l_pr^{-p}) =  S(\omega_{r,p-1}) + \big( b_p + \frac{1}{2} \langle \nabla b_{1},\nabla f_{p-1} \rangle - \Psi_p - \Phi_p \big) r^{-p} + O(r^{-p-1}).$$ This removes the error term up to the change in the holomorphy potential. But by Lemma \ref{lem:subdominant}, we have $$\langle \nabla_{\omega_r} \beta_p,\nabla_{\omega_r} r^{-p} l_p \rangle = O(r^{-p-1})$$ and so no additional error terms to order $r^{-p}$ are caused by the inner products of the new gradient terms. Summing the various terms proves the inductive step, and hence the result. 
\end{proof}

\section{The implicit function theorem argument}\label{sec:ift}

This section proves the main result of the present work, namely a construction of extremal metrics on certain fibrations. We recall the setup. We have a fibration $X\to B$, such that $X$ is endowed a relatively ample line bundle $H$ such that each fibre $(X_b,H_b)$ admits a cscK metric and $X_b$ has discrete automorphism group. We assume $(B,L)$ admits a twisted extremal metric, where the twisting form $\alpha$ is the Weil-Petersson metric induced from the cscK metrics on the fibres $(X_b,H_b)$. We proved in Section {} that in this situation $(X,rL+H)$ admits an ``approximately extremal'' metric. Here we prove the existence of a genuine solution of the extremal equation:

\begin{theorem}\label{mainthm-sec6}
$(X,rL+ H)$ admits an extremal metric for all $r\gg 0$.
\end{theorem}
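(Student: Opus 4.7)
The plan is to apply a quantitative implicit function theorem, starting from the approximate solution $\omega_{r,p}$ produced in Theorem \ref{thm:approx-soln}. Fix a large integer $p$ to be chosen later, let $\phi_p = \varphi_p + \lambda_p$ be the potential so that $\omega_{r,p} = \omega_r + \ddb\phi_p$, and let $\beta_p$ be the associated holomorphy potential. Let $T$ denote the maximal compact torus of $\Aut(X,H)$ used in Theorem \ref{thm:approx-soln}, with Lie algebra $\mfl$; recall from Proposition \ref{prop:fibre-automs} that $\mfl$ is a maximal torus in $\mfh$ and corresponds under the natural isomorphism $\mfh \cong \mfp$ to the chosen maximal torus $\mft$ of $\mfp$. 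Consider the extremal operator
\begin{align*}
\mathcal{Q}_r : C^{4,\alpha}(X, \R)^T \times \overline{\mfl} &\to C^{0,\alpha}(X, \R)^T, \\
\mathcal{Q}_r(\phi, f) &= S(\omega_r + \ddb\phi) - \tfrac{1}{2}\langle \nabla f, \nabla \phi\rangle_{\omega_r+\ddb\phi} - f,
\end{align*}
whose zeros correspond precisely to extremal metrics in $[\omega_r]$. Theorem \ref{thm:approx-soln} says $\|\mathcal{Q}_r(\phi_p,\beta_p)\|_{C^{0,\alpha}} = O(r^{-p-1})$.

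The crucial step is to establish that the linearisation $d\mathcal{Q}_r|_{(\phi_p,\beta_p)}$ admits a right inverse whose operator norm grows at most polynomially in $r$. Using the splitting $C^\infty(X)^T \cong C^\infty_0(X)^T \oplus C^\infty(B)^T$ from Section \ref{subsec:fibrations}, one analyses the two pieces separately. On the fibrewise component $C^\infty_0(X)^T$, Proposition \ref{fine-prop}(ii)-(iii) shows the leading order of the linearisation is the fibrewise Lichnerowicz operator $\scL_0$, which is invertible with bounded inverse onto functions of fibre-integral zero. On the base component $C^\infty(B)^T$, integrating over the fibres and keeping track of the $r^{-1}$-expansion shows (as in Proposition \ref{prop:inductive}) that the leading operator is, up to the explicit scaling factor coming from the $r$-dependence of $\omega_r$, the twisted extremal operator $\scL'_\alpha$ from Proposition \ref{torus-lemma}. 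That proposition, combined with the identification of $\ker\scL_\alpha$ with the lifted torus $\overline{\mft} \subset \overline{\mfl}$ via Proposition \ref{prop:fibre-automs}, asserts surjectivity of $\scL'_\alpha$ on torus-invariant functions once the holomorphy-potential parameter $f$ is allowed to vary in $\overline{\mfl}$. A standard block-triangular inversion then combines the fibrewise and base pieces into a right inverse of norm $O(r^N)$ for some $N$ depending only on the dimension.

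Once the uniform inverse bound is in hand, the rest follows the standard gluing recipe. The nonlinear remainder
$$\mathcal{N}_r(\phi) = \mathcal{Q}_r(\phi_p + \phi, \beta_p) - \mathcal{Q}_r(\phi_p, \beta_p) - d\mathcal{Q}_r|_{(\phi_p,\beta_p)}(\phi, 0)$$
satisfies a quadratic Schauder-type estimate
$$\|\mathcal{N}_r(\phi) - \mathcal{N}_r(\phi')\|_{C^{0,\alpha}} \leq C r^{N'} (\|\phi\|_{C^{4,\alpha}} + \|\phi'\|_{C^{4,\alpha}}) \|\phi - \phi'\|_{C^{4,\alpha}}$$
on a fixed $C^{4,\alpha}$-ball around $0$, for some $N'$ independent of $r$. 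Choosing $p$ large enough that $r^{-p-1}$ beats $r^{2N+N'}$, the quantitative inverse function theorem produces a genuine zero $(\phi_r, f_r)$ of $\mathcal{Q}_r$ in a shrinking ball around $(\phi_p,\beta_p)$ for all $r \gg 0$. Elliptic regularity upgrades $\phi_r$ to a smooth function, yielding the desired extremal metric $\omega_r + \ddb\phi_r$ on $(X, rL+H)$.

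The main obstacle is the uniform bound on the right inverse of the linearisation. The difficulty is that $\omega_r$ is strongly anisotropic: its restriction to a fibre is bounded while its horizontal part scales linearly in $r$, so gradients, Laplacians and Schauder constants all carry $r$-dependent weights that must be tracked carefully throughout the inversion argument. The geometric interpretation of $\ker\scL_\alpha$ developed in Section \ref{sec:lifting} is what makes this bound possible at all: it guarantees that the obstruction space we encounter at each order is exactly $\overline{\mfl}$, which has already been built into the domain of $\mathcal{Q}_r$ as a free parameter.
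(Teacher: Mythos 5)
Your overall strategy coincides with the paper's: start from the approximate solution of Theorem \ref{thm:approx-soln}, set up the extremal operator with the holomorphy potential as a free parameter in $\overline{\mft}$, bound the right inverse of the linearisation polynomially in $r$, prove a quadratic estimate on the nonlinear remainder, and close with a quantitative implicit function theorem, taking $p$ large enough to beat the inverse bound. However, the step you yourself flag as ``the main obstacle'' --- the uniform bound on the right inverse --- is precisely where your argument has a genuine gap. A ``standard block-triangular inversion'' of the splitting $C^{\infty}(X)\cong C^{\infty}_0(X)\oplus C^{\infty}(B)$ does not by itself produce a bounded right inverse of the \emph{actual} linearisation $\D^*_{\omega_{r,p}}\D_{\omega_{r,p}}$ (plus lower-order terms): Proposition \ref{prop:inductive} only identifies the leading terms of a formal $r^{-1}$-expansion of the linearisation, which is exactly what is needed to build the approximate solution, but it does not control the operator norm of the inverse of the full operator uniformly in $r$. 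The base block is of order $r^{-2}$ relative to the fibre block, the off-diagonal couplings are of order $r^{-1}$, and the error terms in the expansion must be estimated in $r$-dependent norms adapted to the collapsing metric $\omega_{r,p}$. The actual content here is a global Poincar\'e-type inequality for the operator $\D$ on $(X,\omega_{r,p})$, i.e.\ a lower bound of order $r^{-3}$ on the first nonzero eigenvalue of the Lichnerowicz operator, established by Fine via local models and a patching argument (\cite[Lemmas 6.5--6.8]{fine1}), and adapted here by working orthogonally to the lifted holomorphy potentials $\tau_{r,p}(\overline{\mft})$ rather than just the constants. Without invoking (or reproving) this spectral estimate, the claim that the right inverse is $O(r^{N})$ is unsupported.

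Two secondary points. First, you never make precise how a potential $h\in\overline{\mft}$ on $B$ is converted into a holomorphy potential on $X$ with respect to the perturbed metric $\omega_{r,p}$; the paper encodes this in the map $\tau_{r,p}(h)=r\pi^*h_B+h_X+\tfrac12\langle\nabla\varphi_{r,p},\nabla h\rangle$ and uses $r^{-1}\tau_{r,p}=\Id+O(r^{-1})$ to see that the inverse behaves well on this finite-dimensional factor; some such device is needed for the cokernel to be exactly matched by the parameter space uniformly in $r$. Second, the paper deliberately works in Sobolev spaces $L^2_k$ (with $k$ large enough that $L^2_{k+4}\hookrightarrow C^{4,\alpha}$), because Fine's Schauder and Poincar\'e estimates and the Lipschitz bound on the nonlinearity (\cite[Lemma 2.10]{fine1}) are proved there with constants independent of $r$; asserting the analogous quadratic estimate directly in H\"older norms with only a polynomially growing constant would itself require justification. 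Neither of these is fatal, but the missing spectral bound on the linearisation is.
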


Fine's work uses a quantitative inverse function theorem to perturb from an ``approximate cscK metric'', analogous to the metrics we have constructed in Section \ref{sec:approx}, to a genuine cscK metric \cite{fine1}. Since the operator we consider only has a one-sided inverse, we shall instead rely on a quantitive implicit function theorem as in \cite[Theorem 29]{bronnle}.

\begin{theorem}\label{implicitfnthm}\cite{bronnle} Consider a differentiable map of Banach spaces $F: B_1 \to B_2$  whose derivative at $0$ is surjective with right-inverse $P$. Denote by 
\begin{enumerate}[(i)]
\item $\delta'$ the radius of the closed ball in $B_1$ centred at $0$ on which $F-DF$ is Lipschitz of constant $(2\|P\|)^{-1}$,
\item $\delta = \delta' (2\|P\|)^{-1}$.
\end{enumerate} 
For all $y \in B_2$ such that $\|y-F(0)\| < \delta$, there exists $x\in B_1$ satisfying $F(x) = y$. 
\end{theorem}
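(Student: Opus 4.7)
The plan is to apply Theorem \ref{implicitfnthm} to a nonlinear operator $F_r$ based at the approximate extremal metric $\omega_{r,p}$ produced by Theorem \ref{thm:approx-soln}, taking $p$ sufficiently large and $r\gg 0$.

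First I set up the operator. Let $T$ be the maximal compact torus of $\Aut(p)$ under which $\omega_B$ is invariant, and let $\mfl \subset \mfh$ be its lift to $X$ via the isomorphism $\mfh \cong \mfp$ of Proposition \ref{prop:fibre-automs}. Denote by $\overline{\mfl}$ the associated space of holomorphy potentials. Consider
\[
F_r : C^{4,\alpha}(X,\R)^T \times \overline{\mfl} \longrightarrow C^{0,\alpha}(X,\R)^T,
\]
\[
F_r(\phi,f) = S(\omega_{r,p} + \ddb\phi) - \tfrac{1}{2}\langle \nabla(\beta_p + f),\nabla(\varphi_p + \lambda_p + \phi)\rangle_{\omega_r} - (\beta_p + f).
\]
A zero $(\phi,f)$ of $F_r$ corresponds to an extremal metric $\omega_{r,p} + \ddb\phi$ in the class $c_1(rL+H)$, and Theorem \ref{thm:approx-soln} guarantees $\|F_r(0,0)\| = O(r^{-p-1})$ pointwise.

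Second, I analyse the linearisation $dF_r|_{(0,0)}$. By Proposition \ref{prop:linearisation-twisted}, $dF_r(\phi,f) = -\D^*_{\omega_{r,p}}\D_{\omega_{r,p}}\phi - f$ plus first-order terms controlled by the approximate extremal condition. The $O(1)$ behaviour in the fibre directions is governed by the fibrewise Lichnerowicz operator $\scL_0$, while the $O(r^{-2})$ behaviour on the base recovers the twisted Lichnerowicz operator $\scL_\alpha$ of Section \ref{sec:lifting}. Surjectivity of the fibrewise piece, modulo fibre integrals, is Proposition \ref{fine-prop}(iii); surjectivity of the base operator $\scL'_\alpha : C^{4,\alpha}(B,\R)^T \times \overline{\mft} \to C^{0,\alpha}(B,\R)^T$ is Proposition \ref{torus-lemma}. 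Splicing these in $r$-weighted Hölder norms, in the spirit of \cite{fine1, bronnle}, yields a right-inverse $P_r$ of $dF_r|_{(0,0)}$ with operator norm $\|P_r\| = O(r^N)$ for some fixed $N$. The crucial point that distinguishes our situation from \cite{fine1} is that the cokernel of $\scL_\alpha$ on the base consists of holomorphy potentials in $\overline{\mft}$, and by Proposition \ref{prop:fibre-automs} these lift uniquely to $\overline{\mfl}$ on $X$; enlarging the domain of $F_r$ by $\overline{\mfl}$ absorbs the cokernel exactly.

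Third, I apply Theorem \ref{implicitfnthm}. The nonlinear error $F_r - dF_r|_{(0,0)}$ comes from the Monge--Amp\`ere nonlinearity in $S(\omega_{r,p} + \ddb\phi)$ and from the quadratic dependence of the inner product term on $(\phi,f)$; in the weighted norms it is Lipschitz of a constant independent of $r$ on a ball of radius $\delta'_r$ bounded below by an inverse polynomial in $r$. Hence the radius $\delta_r = \delta'_r/(2\|P_r\|)$ satisfies $\delta_r \geq c\,r^{-N'}$ for some fixed $N'$. Choosing $p$ in Theorem \ref{thm:approx-soln} with $p+1 > N'$ forces $\|F_r(0,0)\| = O(r^{-p-1}) < \delta_r$ for $r \gg 0$, and Theorem \ref{implicitfnthm} produces a zero $(\phi_r,f_r)$; this yields a $T$-invariant $C^{4,\alpha}$ extremal metric in $c_1(rL+H)$, which is smooth by elliptic regularity. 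The hard part will be choosing the weighted Hölder norms on $X$, reflecting the mismatched scales along fibres and along $B$, so that the fibrewise and base inverses glue to a controllable right-inverse and the nonlinearity remains uniformly Lipschitz — this is technical but essentially the extremal analogue of \cite{fine1}. Our genuinely new input is the base-side piece, where Proposition \ref{torus-lemma} and the lifting isomorphism $\mfh \cong \mfp$ replace Fine's hypothesis that $B$ carry no holomorphic vector fields.
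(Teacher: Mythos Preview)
You have not attempted to prove the stated theorem at all. Theorem \ref{implicitfnthm} is the abstract quantitative implicit function theorem, quoted from \cite{bronnle}; the paper does not prove it, and neither do you. What you have written is instead a proof sketch of Theorem \ref{mainthm-sec6}, the existence of extremal metrics on $(X,rL+H)$, using Theorem \ref{implicitfnthm} as a black box. If the intent was to prove the abstract Banach-space statement (a Newton iteration/contraction argument), nothing in your proposal addresses that.

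Granting that you meant Theorem \ref{mainthm-sec6}, your outline is broadly correct and follows the same strategy as the paper, but two points of the execution diverge from what the paper actually does. First, the paper works throughout in Sobolev spaces $L^2_k(g_r)$, not in H\"older spaces; Fine's estimates that are invoked (Lemma \ref{globalschauder}, \cite[Lemmas 5.6, 5.7, 6.5--6.8]{fine1}) are Sobolev estimates, and the passage to smoothness uses an embedding $L^2_{k+4}\hookrightarrow C^{4,\alpha}$ for $k$ large plus elliptic regularity. Your invocation of ``$r$-weighted H\"older norms, in the spirit of \cite{fine1}'' is therefore not what Fine does and would require a separate set of estimates. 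Second, and more substantively, the paper does \emph{not} obtain the right inverse by ``splicing'' the fibrewise inverse of $\scL_0$ with the base inverse of $\scL'_\alpha$. That splicing is the mechanism behind the construction of approximate solutions in Section \ref{sec:approx}. For the implicit function step the paper instead bounds the right inverse of the \emph{global} Lichnerowicz operator $\D^*_{\omega_{r,p}}\D_{\omega_{r,p}}$ on $X$ directly, via Fine's global Schauder/Poincar\'e estimates, working orthogonally to its genuine kernel $\overline{\mfh}$ and absorbing that kernel by the map $\tau_{r,p}:\overline{\mft}\to C^\infty(X)$. This yields the explicit bound $\|P_{r,p}\|\leq Cr^3$ (Proposition \ref{prop:inv-bounds}), hence $\delta_r\geq Cr^{-6}$, and the choice $p\geq 6$ suffices. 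Your description leaves the exponent unspecified and locates the new input (Proposition \ref{torus-lemma}) at the wrong stage of the argument: it is used to build the approximate solutions, not to invert the linearisation in Section \ref{sec:ift}.
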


We shall apply this with $F$  given by the extremal operator. Thus the main point is to control both the non-linear terms of the extremal operator and to bound the one-sided inverse of (a variant of) the linearisation. For this, we apply some estimates due to Fine, as well as some estimates specific to our more general situation.  

The result of Fine which we shall use is the following Schauder-type estimate. 

\begin{lemma}\label{globalschauder}\cite[Lemma 6.8]{fine1} Denote by $g_r$ the Riemannian metric corresponding to the K\"ahler form $\omega_{r,p}$. For each $k$ and $p$, there are constants $C, r_0 >0$ such that for all $r \geq r_0$ \begin{align*} \| \phi \|_{L^{2}_{k+4} (g_r) } \leq C \big( \| \phi \|_{L^{2} (g_r)} + \|  \mathcal{D}^* \mathcal{D}(\phi) \|_{L^2_k (g_r)} \big),
\end{align*}
for all $\phi \in L^{2}_{k+4}$.  Here $\mathcal{D}^* \mathcal{D}$ denotes the Lichnerowicz operator $\mathcal{D}_{\omega_{r,p}}^* \mathcal{D}_{\omega_{r,p}}$ defined using $\omega_{r,p}$.
\end{lemma}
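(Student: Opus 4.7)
The plan is to reduce the claimed global Sobolev-type regularity estimate to local elliptic regularity estimates on balls of uniformly bounded size in $g_r$, whose constants can be made uniform in $r$ by working in rescaled coordinates adapted to the adiabatic limit.

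First, I would set up adapted coordinates around an arbitrary point $x \in X$ with $\pi(x)=b$. Pick holomorphic fibre coordinates $(y^1,\ldots,y^m)$ on $X_b$ near $x$ in which $\omega_0|_{X_b}$ is uniformly comparable to the Euclidean metric, and holomorphic base coordinates $(z^1,\ldots,z^n)$ on $B$ near $b$ in which $\omega_B$ is uniformly comparable to the Euclidean metric. Extend the fibre coordinates to a neighbourhood of $x$ in $X$ using the horizontal distribution of $\omega_0$, giving local coordinates $(y,z)$ on $X$. Now define rescaled coordinates $(y, Z) = (y, r^{1/2} z)$. Then in these coordinates the Riemannian metric $g_r$ associated to $\omega_{r,p} = \omega_0 + r\omega_B + \ddb(\varphi_p+\lambda_p)$ is uniformly comparable to the standard Euclidean metric on $\C^{n+m}$, and a ball of unit radius in the rescaled coordinates corresponds to a $g_r$-geodesic ball of uniformly bounded radius. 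This is the adiabatic rescaling that underlies all of \cite{fine1}.

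The central technical step is to show that, in these rescaled coordinates, the coefficients of the Lichnerowicz operator $\mathcal{D}^*\mathcal{D}$ have $C^{k+2}$-norms uniformly bounded in $r \geq r_0$. Using the explicit expression
\[
\mathcal{D}^*\mathcal{D}(\phi) = \Delta^2 \phi + R^{\bar k j}\nabla_j \nabla_{\bar k}\phi + g^{j\bar k}\nabla_j S(\omega_{r,p})\nabla_{\bar k}\phi,
\]
together with the explicit formula for $\omega_{r,p}$ and the fact that the correction terms $\varphi_p, \lambda_p$ produced in Section \ref{sec:approx} are uniformly bounded in all derivatives, one verifies that each component of the metric, its inverse, its curvature tensor, and $S(\omega_{r,p})$ (along with all their derivatives in the rescaled variables) are uniformly bounded. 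The crucial cancellation is that the rescaling $z \mapsto r^{1/2}z$ precisely compensates for the factor of $r$ in front of $\omega_B$, so that in the rescaled coordinates the leading term in $g_r$ is $r$-independent. Standard $L^2$-elliptic regularity for a fourth-order elliptic operator with uniformly bounded coefficients then yields the local estimate
\[
\|\phi\|_{L^2_{k+4}(B_{1/2})} \leq C\left(\|\phi\|_{L^2(B_1)} + \|\mathcal{D}^*\mathcal{D}\phi\|_{L^2_k(B_1)}\right),
\]
with $C$ independent of $r$, the Sobolev norms being taken in the rescaled coordinates, hence equivalently with respect to $g_r$. Finally I would patch the local estimates via a partition of unity $\{\chi_\alpha\}$ subordinate to a cover of $X$ by such $g_r$-balls, chosen with uniformly bounded multiplicity and with $\nabla^j_{g_r}\chi_\alpha$ uniformly bounded in $r$; applying the local estimate to $\chi_\alpha \phi$, expanding the commutator $[\mathcal{D}^*\mathcal{D}, \chi_\alpha]$ (whose error terms are absorbable by interpolation between $L^2$ and $L^2_{k+4}$), and summing over $\alpha$ produces the claimed global estimate.

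The main obstacle is the uniform coefficient bound: verifying that every geometric quantity derived from $\omega_{r,p}$, to every fixed order of derivative, is uniformly bounded in the rescaled coordinates. This is a routine but lengthy calculation that hinges on the horizontal/vertical decomposition of Section \ref{subsec:fibrations} to track how base and fibre directions scale separately, and on the inductive construction of Section \ref{sec:approx} which guarantees that $\varphi_p$ and $\lambda_p$ together with all their derivatives are $O(1)$ as $r \to \infty$.
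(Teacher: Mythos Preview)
Your approach is correct and is essentially the adiabatic local-model argument that Fine carries out in \cite[Sections 5--6]{fine1}. Note, however, that the present paper does not reprove this lemma: it is stated as a direct citation of \cite[Lemma 6.8]{fine1}, with only the remark that Fine's Section~5 results (the local model estimates and the passage from local to global bounds) do not use any special properties of the base metric and therefore apply verbatim here. So there is nothing to compare against beyond the observation that your sketch accurately reconstructs Fine's method.
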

Fine proves this result for general fibrations $X\to B$ with metrics of the form we have considered. We emphasise that the results of \cite[Section 5]{fine1}, on which the previous lemma is based, do not use any special metric metric on $B$ and so also apply directly to our situation.

While in Fine's situation the operator $\mathcal{D}_{\omega_{r,p}}^* \mathcal{D}_{\omega_{r,p}}$ is invertible when restricting to functions of mean value zero, this is no longer the case for us due to the presence of automorphisms. In order to state the result we require in our situation, we begin by describing the change in potentials for holomorphic vector fields on $X$, as this space corresponds to the kernel and cokernel of the Lichnerowicz operator. 

As in Section \ref{sec:approx} we can assume $\omega_0$ is actually positive. Let $\xi$ be a holomorphic vector field on $X$. Let $h_X$ be its potential with respect to our initial metric $\omega_0$ on $X$. Recall from Propositions \ref{automs-prop} and \ref{prop:fibre-automs} that $\xi$ corresponds uniquely to a holomorphic vector field $\widetilde{\xi}$ on $B$. Let $h_B$ be the holomorphy potential of this metric with respect to $\omega_B$, the twisted extremal metric on $B$. Then $\iota_{\xi} \pi^* \omega_B = \pi^* (\iota_{\widetilde{\xi}} \omega_B ) = \pi^* \bar \partial h_B = \bar \partial (\pi^* h_B)$. Thus $r \pi^* h_B + h_X  $ is a holomorphy potential for the vector field $\xi$ on $X$ with respect to the metric $\omega_{r,0} = \omega_X + r \pi^* \omega_B$.

Let $\varphi_{r,p}$ be the K\"ahler potential of $\omega_{r,p}$ with respect to $\omega_{r,0}$ as in the construction of the approximate solutions in Theorem \ref{thm:approx-soln}, i.e. $\omega_{r,p} = \omega_{r,0} + \ddb \varphi_{r,p}$. Then as described in Section \ref{subsec:extremal}, the holomorphy potential for $\xi$ with respect to $\omega_{r,p}$ is given by $r \pi^* h_B + h_X + \frac{1}{2} \langle \nabla \varphi_{r,p} , \xi \rangle$. 

\begin{definition}For $h = h_B \in \overline{\mathfrak{p}}$, we define the lift of $h$, denoted by $\tau_{r,p} (h)$, to be the element $$\tau_{r,p} (h) =  r \pi^* h_B +h_X +  \frac{1}{2} \langle \nabla \varphi_{r,p} , \nabla h \rangle \in \overline{\mathfrak{h}}.$$\end{definition} Thus we have encoded the change in holomorphy potential with $(r,p)$ as a sequence of maps $$\tau_{r,p} : \overline{\mathfrak{p}} \rightarrow C^{\infty} (X).$$

It will be necessary to work with functions invariant under the fixed maximal torus $T$, so we consider the space $(L_k^2)^T$ of $T$-invariant functions in $L_k^2$. Here $T$ is a maximal torus in $\textnormal{Aut} (X)$ corresponding to a maximal torus in $\textnormal{Aut(p)}$, the automorphisms of the map. Then the pullback of invariant functions on $B$ give $T$-invariant functions on $X$, and all the approximate metrics $\omega_{r,p}$ are $T$-invariant. We are then also restricting the map $\tau_{r,p}$ to $\overline{\mathfrak{t}}$, the Lie algebra of the maximal torus.

\begin{lemma} The  operator $ (L^2_{k+4})^T \times  \overline{\mathfrak{t}} \to (L^2_{k})^T$ $$ (\psi, h) \mapsto \mathcal{D}^* \mathcal{D} (\psi) + \tau_{r,p}(h) $$ admits a right inverse $Q$ satisfying the bound $$\|Q(\psi)\|_{L^2_ {k+4}(g_r)} \leq K r^3 \|\psi\|_{L^2_k(g_r)}.$$ Here $K$ is a constant independent of $r$.
\end{lemma}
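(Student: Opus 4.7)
The plan is to build $Q$ by exploiting self-adjointness of $\mathcal{D}^*\mathcal{D}$ together with the identification of its kernel with $\tau_{r,p}(\overline{\mft})$. Since $\mathcal{D}^*\mathcal{D}$ is self-adjoint in the $L^2(\omega_{r,p})$ pairing, its image in $(L^2_k)^T$ equals the $L^2$-orthogonal complement of its kernel. On $(L^2_{k+4})^T$ the kernel consists of $T$-invariant holomorphy potentials for $\omega_{r,p}$, and by Proposition \ref{prop:fibre-automs} combined with the very definition of $\tau_{r,p}$ these are exactly the elements of $\tau_{r,p}(\overline{\mft})$. Thus the cokernel of $\mathcal{D}^*\mathcal{D}$ on $(L^2_k)^T$ is naturally identified with $\overline{\mft}$, which is exactly the extra piece added in the operator, yielding surjectivity.

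To construct $Q$ explicitly, given $\eta \in (L^2_k)^T$ I would first define $h \in \overline{\mft}$ by $L^2(\omega_{r,p})$-orthogonally projecting $\eta$ onto the finite-dimensional subspace $\tau_{r,p}(\overline{\mft})$ and inverting $\tau_{r,p}$. The difference $\eta - \tau_{r,p}(h)$ then lies in the image of $\mathcal{D}^*\mathcal{D}$, so I let $\psi$ be the unique preimage $L^2$-orthogonal to $\ker \mathcal{D}^*\mathcal{D}$, and set $Q(\eta) = (\psi, h)$.

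The $r^3$-bound reduces to a spectral estimate on $\mathcal{D}^*\mathcal{D}$. Lemma \ref{globalschauder} gives $\|\psi\|_{L^2_{k+4}} \leq C(\|\psi\|_{L^2} + \|\mathcal{D}^*\mathcal{D}(\psi)\|_{L^2_k})$, and the finite-dimensional projection defining $h$ behaves uniformly in $r$ after an appropriate rescaling, so it suffices to show that the smallest nonzero eigenvalue $\lambda_1(r)$ of $\mathcal{D}^*\mathcal{D}$ on the $L^2$-orthogonal complement of $\tau_{r,p}(\overline{\mft})$ is bounded below by $C r^{-3}$. This spectral bound is the main obstacle.

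The strategy for the spectral bound is to decompose $\psi$ into a fibre-mean-zero vertical part $\psi_V$ and a base-pulled-back part $\pi^* F$. On the vertical part, $\mathcal{D}^*\mathcal{D}$ is uniformly comparable to the fibre Lichnerowicz operator, which since each fibre has discrete automorphism group has a uniform positive spectral gap, giving a lower bound independent of $r$. On the base part, Proposition \ref{prop:inductive} shows that to leading order $\mathcal{D}^*\mathcal{D}$ restricted to pulled-back functions is $r^{-2} D_{\omega_B}$, and on the $L^2$-orthogonal complement of $\overline{\mft}$ the base-level operator $D_{\omega_B}$ has a positive spectral gap by Proposition \ref{torus-lemma}. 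Carefully controlling the mixing between the two pieces (each cross term of $\ddb$ producing an additional factor of $r^{-1}$) then yields $\lambda_1(r) \geq C r^{-3}$ and completes the argument.
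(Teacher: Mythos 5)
Your construction of $Q$ --- $L^2$-project onto the finite-dimensional subspace $\tau_{r,p}(\overline{\mathfrak{t}})$ to define the $h$-component, then take the unique preimage under $\mathcal{D}^*\mathcal{D}$ orthogonal to the kernel for the $\psi$-component --- is exactly the construction in the paper, and your reduction of the operator bound to a lower bound on the first nonzero eigenvalue of $\mathcal{D}^*\mathcal{D}$ via the Schauder estimate of Lemma \ref{globalschauder} is also how the paper proceeds (your remark that $r^{-1}\tau_{r,p}$ is uniformly invertible, so the $h$-factor satisfies a better bound, likewise matches). The difference is in how the eigenvalue bound is obtained: the paper gets it by citing the Poincar\'e inequalities of Fine \cite[Lemmas 6.5--6.7]{fine1}, noting that the only modification needed is to work orthogonally to the kernel of $\bar\partial$ on $\Gamma(TX)$ (equivalently, to $\tau_{r,p}(\overline{\mathfrak{t}})$ rather than just the constants), whereas you propose to prove it from scratch.

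That last step is where essentially all the analytic content of the lemma lives, and your sketch of it has a genuine gap. The decomposition of $\psi$ into a fibre-mean-zero part and a pulled-back part does not diagonalise the fourth-order operator $\mathcal{D}^*\mathcal{D}$, and the assertion that each cross term ``produces an additional factor of $r^{-1}$'' does not close the argument: the diagonal base contribution is itself only of size $r^{-2}$ (and the gap you need is $r^{-3}$), so cross terms that are merely one power of $r$ below the $O(1)$ vertical term are of the same order as, or larger than, the base term they must not swamp. They have to be absorbed by a weighted Cauchy--Schwarz, and this absorption is precisely where a further factor of $r$ is lost, degrading the naive $r^{-2}$ gap to the $r^{-3}$ in the statement; your power counting does not account for this. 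Fine's actual argument sidesteps the fourth-order operator by proving a Poincar\'e inequality for the first-order operator $\mathcal{D}=\bar\partial\nabla^{1,0}$, combining a Poincar\'e inequality for functions with one for $\bar\partial$ acting on sections of $TX$; the latter exploits the sequence $0\to\scV\to TX\to\pi^*TB\to 0$ and the vanishing $H^0(X_b,TX_b)=0$ coming from the discrete fibre automorphisms. Your sketch never descends to the level of vector fields. A smaller but real issue: you invoke Proposition \ref{prop:inductive} to say $\mathcal{D}^*\mathcal{D}$ on pullbacks is $r^{-2}D_{\omega_B}$ to leading order, but that proposition concerns the linearisation of the scalar curvature, and the $\alpha$-contribution to $\int_X|\mathcal{D}\pi^*f|^2$ actually arises from the vertical component of $\bar\partial$ applied to the horizontal lift of $\nabla f$ (the Kodaira--Spencer classes), which requires its own argument. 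So the skeleton is right, but the crucial quantitative estimate is asserted rather than proved.
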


Note that in the statement above, the operator, and hence also its inverse, depends  on both $r$ and $p$. 

\begin{proof}
From the description of the kernel of the Lichnerowicz operator on $X$ (and hence of its cokernel), we get the existence of $Q$. For the bound on the $L^{2}_{k+4}$-factor of $Q(\psi)$, we have to establish such a bound when $\psi$ is assumed to be orthogonal to $\tau_{r,p}(\overline{\mathfrak{t}})$ as this is the image of $\mathcal{D}^* \mathcal{D}$. 

The bound follows as in \cite[Lemma 6.5, 6.6, 6.7]{fine1} which establishes a suitable Poincar\'e inequality for the $\mathcal{D}$. These three results go over exactly as in Fine's work in our setting, except that for the result of \cite[Lemma 6.6]{fine1}, which gives a Poincar\'e inequality for the $\overline{\partial}$-operator on $\Gamma ( TX)$, we have to work orthogonally to the kernel of this operator. This in turn gives the required Poincar\'e inequality for $\mathcal{D}$, where one has to work orthogonally to $\tau_{r,p}(\overline{\mathfrak{t}})$, not just the constants. 

Thus the right-inverse map $Q$ that sends a function in the orthogonal complement to $\tau_{r,p}(\overline{\mathfrak{t}})$ to the unique function mapping to it which is orthogonal to the kernel of $\mathcal{D}^* \mathcal{D}$ satisfies the required bound. On $\tau_{r,p}(\overline{\mathfrak{t}})$, $Q$ sends a function $\tau_{r,p} (f)$ to the corresponding function in $f \in \overline{\mathfrak{t}}$. Since the map $r^{-1} \tau_{r,p}$ is $\textnormal{Id} + O(r^{-1})$, so is its inverse, so on this factor, $Q$ satisfies a better bound than required. 
\end{proof}

Combining these results, we obtain a bound on the right inverse of the extremal operator.

\begin{proposition}\label{prop:inv-bounds} Fix a positive integer $p$ and let $\omega = \omega_{r,p}$. Denote \begin{align*} & G_{r,p} : (L_{k+4}^2)^T \times \overline{\mathfrak{t}}  \to (L_k^2)^T, \\ & G_{r,p}(\phi,h) = -\mathcal{D}^*_{\omega} \mathcal{D}_{\omega} (\phi) + \frac{1}{2} \langle \nabla \big( S(\omega) - \tau_{r,p}(h) \big), \nabla \phi \rangle -\tau_{r,p}(h).\end{align*} Then there exists a $C$ independent of $r$ such that $G_{r,p}$ has a right inverse $P_{r,p}$ with $\|P_{r,p}\|_{op} \leq C r^3$.

\end{proposition}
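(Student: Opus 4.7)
The plan is to obtain $P_{r,p}$ by perturbing the right inverse $Q$ constructed in the preceding lemma. Writing $\tau := \tau_{r,p}$ and $\omega := \omega_{r,p}$, decompose
\[
G_{r,p}(\phi, h) = -L(\phi, h) + R(\phi, h),
\]
where $L(\phi, h) := \mathcal{D}^*_\omega \mathcal{D}_\omega (\phi) + \tau(h)$ is precisely the operator whose right inverse $Q$ (with $\|Q\|_{op}\leq K r^3$) was just produced, and
\[
R(\phi, h) := \tfrac{1}{2}\langle \nabla (S(\omega) - \tau(h)), \nabla \phi\rangle_{\omega}
\]
is the remaining correction, which is zeroth-order in $h$ and first-order in $\phi$. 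Then $G_{r,p}\circ(-Q) = \mathrm{Id} - R\circ Q$, so if one can show $\|R\circ Q\|_{op}\leq 1/2$ for all $r$ sufficiently large, a Neumann series yields a bounded inverse of $\mathrm{Id} - R\circ Q$ of norm at most $2$, whence $P_{r,p}:=-Q\circ (\mathrm{Id}-R\circ Q)^{-1}$ is a right inverse for $G_{r,p}$ with $\|P_{r,p}\|_{op}\leq 2K r^3$, giving the proposition.

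The heart of the argument is the estimate $\|R\|_{op}\leq \frac{1}{2K r^3}$ in the $g_r$-geometry. For the part of $R$ independent of $h$, namely $\tfrac12\langle \nabla S(\omega_{r,p}),\nabla \phi\rangle$, I would appeal to Theorem \ref{thm:approx-soln}: it shows that $S(\omega_{r,p})$ differs from the holomorphy potential $\beta_p$ of the lifted extremal vector field by an inner product term plus an $O(r^{-p-1})$ remainder, and hence $\nabla(S(\omega_{r,p})-\beta_p)$ is uniformly small in the $g_r$-norm, while $\nabla \beta_p$ itself is (essentially) the extremal vector field lifted from the base, whose $g_r$-behaviour is controlled through Fine's analysis in \cite[Section~5]{fine1}. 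Pairing with $\nabla\phi$ and absorbing one derivative against the $L^2_{k+4}$-norm of $\phi$ yields the required decay by taking $p$ sufficiently large. For the $h$-dependent part $\tfrac12\langle \nabla \tau(h),\nabla \phi\rangle$, one uses the explicit formula $\tau(h) = r\pi^* h_B + h_X + \tfrac12\langle \nabla\varphi_{r,p},\nabla h\rangle$ together with the fact that $\nabla_\omega \tau(h)$ is (a multiple of) the lift to $X$ of a holomorphic vector field on $B$, whose $g_r$-norm is governed by the rescaling $g_r \approx r\,\pi^*g_B + g_0$; measuring $\|h\|$ on $\overline{\mathfrak{t}}$ in a manner consistent with this rescaling (so that $\|h\|$ reflects the $g_r$-size of $\tau(h)$) makes the product estimate uniform in $r$.

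The main obstacle will be the $h$-dependent portion of $R$: while the $h$-independent piece can be made arbitrarily small by choosing $p$ large in the approximate solution, the term $\tfrac12\langle \nabla \tau(h),\nabla\phi\rangle$ has no such free parameter and must be controlled purely by the rescaled geometry of $g_r$. This reflects the new difficulty absent from Fine's work, where the lack of automorphisms meant no $h$-variable appeared at all, and is precisely why the previous lemma carries the worse factor $r^3$. Matching the derivative losses in Fine's Schauder-type estimate (Lemma \ref{globalschauder}) against the $g_r$-scaling of lifted holomorphic vector fields and of holomorphy potentials is the key bookkeeping; once this is in place, the Neumann series closes for $r$ sufficiently large and the proposition follows.
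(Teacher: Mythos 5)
Your overall strategy is the same as the paper's: write $G_{r,p}=-L+R$ where $L(\phi,h)=\mathcal{D}^*_{\omega}\mathcal{D}_{\omega}(\phi)+\tau_{r,p}(h)$ is the operator of the preceding lemma with right inverse $Q$, $\|Q\|\leq Kr^3$, and invert $G_{r,p}$ by a perturbation/Neumann-series argument once the correction composed with $Q$ has norm at most $1/2$. (The paper compresses exactly this into two sentences.) The gap is in your plan for the decisive estimate $\|R\|\leq (2Kr^3)^{-1}$. You split the correction into the $h$-independent piece $\tfrac12\langle\nabla S(\omega_{r,p}),\nabla\phi\rangle$ and the $h$-dependent piece $-\tfrac12\langle\nabla\tau_{r,p}(h),\nabla\phi\rangle$ and propose to bound them separately. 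Neither piece is small enough on its own: $S(\omega_{r,p})=b_0+r^{-1}b_1+O(r^{-2})$ with $b_0$ constant and $b_1\in C^{\infty}(B)$, so by Lemma \ref{lem:subdominant} the contribution of $\nabla\beta_p$ to your first piece has operator norm of order $r^{-2}$ \emph{independently of $p$}; taking $p$ large does not improve it (contrary to your claim that the $h$-independent piece "can be made arbitrarily small by choosing $p$ large"), and $r^{-2}\cdot Kr^3=Kr\to\infty$, so the Neumann series does not close. The same order holds for the $\tau(h)$ piece. There is also a structural point you pass over: if the $h$ appearing inside the gradient term is genuinely a variable, then $G_{r,p}$ is bilinear rather than linear in $(\phi,h)$, so for any right-inverse argument that $h$ must be frozen at the base point.

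The missing idea is that the two pieces must be kept together. The operator $G_{r,p}$ is the derivative of $F$ at the approximate solution, and the coefficient of the gradient term is the single quantity $S(\omega_{r,p})-\tau_{r,p}(h_0)$, where $\tau_{r,p}(h_0)=\beta_p+\tfrac12\langle\nabla_{\omega_r}\beta_p,\nabla_{\omega_r}(\varphi_p+\lambda_p)\rangle$ is precisely the holomorphy potential, with respect to $\omega_{r,p}$, of the approximate extremal field. Theorem \ref{thm:approx-soln} states exactly that this difference is $O(r^{-p-1})$ pointwise in $C^k$, whence (after the pointwise-to-Sobolev patching of \cite[Lemmas 5.6--5.7]{fine1}) the whole correction $R$ has operator norm $O(r^{-p+c})$ for a constant $c$ independent of $p$, and $\|R\circ Q\|=O(r^{-p+c+3})\to 0$ once $p$ is chosen large enough. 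In other words, the leading $O(r^{-2})$ parts of your two pieces cancel; the "main obstacle" you identify (the $h$-dependent part) is resolved by this cancellation coming from the quality of the approximate solution, not by bookkeeping the $g_r$-rescaling of lifted holomorphic vector fields.
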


\begin{proof} Apply the Lemmas above and note that $G_{r,p}$ is  $- (\mathcal{D}^*_{\omega} \mathcal{D}_{\omega} + \tau_{r,p}(h) )$ plus decaying terms which decay faster than $(\mathcal{D}^*_{\omega} \mathcal{D}_{\omega} - \tau_{r,p}(h) )$. Thus for $r$ sufficiently large, $G_{r,p}$  also admits a right invertible and satisfies a similar bound, with a possibly larger constant $C$.  
\end{proof}

We next bound the nonlinear terms, analogously to Lemma 7.1 of \cite{fine1}. The operator we will use is the operator $F = F_{r,p}$ defined by
\begin{align}\label{operator}F(\psi, h)=S(\omega_{r,p} + \ddb \psi) -  \frac{1}{2}\langle \nabla_{\omega_r} \beta_p,&\nabla_{\omega_r} (\phi_{r,p})\rangle- \\ \nonumber   - \beta_p  - &\frac{1}{2} \langle \nabla(\tau_{r,p} (h)) , \nabla \psi \rangle -  \tau_{r,p} (h).\end{align}
Note that a zero of $F$ precisely gives a K\"ahler potential for an extremal metric. Moreover, the linearisation of $F$ is given by the operator $G_{r,p}$ of Proposition \ref{prop:inv-bounds}. The terms involving $\beta_p = \sum_{i=0}^p b_i r^{-i}$ are terms independent of $\psi$, added so that $F(0,0)$ is an approximate solution to the extremal equation. We also remark that $\beta_p$ is a potential with respect to $\omega_{r,0}$ of a holomorphic vector field on $X$, and so the $b_i$ depend on $r$. However, they are of the form $r^{-1} \tau_{r,0} (\tilde{b}_i) = \tilde{b}_i + O(r^{-1})$ for some \textit{fixed} $\tilde{b}_i \in \overline{\mathfrak{t}}$.

From now on we take the number of weak derivatives $k$ to be large enough such that $L^2_{k+4}$ in particular embeds into $C^{4,\alpha}$. This ensures that a solution to the extremal equation \eqref{operator} in $L^2_{k+4}$ in fact is smooth. It is also assumed in order to apply certain estimates from \cite{fine1}.

\begin{lemma}\label{lemma:nonlinearterms} Let $\mathcal{N}_{r,p} = F_{r,p} - G_{r,p} $. Then there are constants $c,C >0$ such that for all sufficiently large $r$, if $\phi,\psi \in (L^2_{k+4})^T$ satisfy $\| \phi \|_{L^2_{k+4}} , \| \psi \|_{L^2_{k+4}} \leq c$ then $$ \| \mathcal{N}_{r,p} (\phi) - \mathcal{N}_{r,p} (\psi) \|_{L^2_k } \leq C \big( \| \phi \|_{L^2_{k+4}} + \| \psi \|_{L^2_{k+4}} \big) \| \phi - \psi \|_{L^2_{k+4}}.$$

\end{lemma}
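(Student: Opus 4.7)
The plan is to begin by identifying $\mathcal{N}_{r,p}$ in explicit form, which reveals a significant cancellation: $\mathcal{N}_{r,p}$ is in fact independent of $h$ and equals the non-linear part of the scalar curvature operator. This reduces the statement to an estimate that is already done in \cite[Lemma 7.1]{fine1}.

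First I would write out $F_{r,p} - G_{r,p}$ using \eqref{operator} and the formula for $G_{r,p}$ in Proposition \ref{prop:inv-bounds}. The cross term $-\tfrac{1}{2}\langle \nabla \tau_{r,p}(h), \nabla \psi \rangle$ appearing in $F_{r,p}$ cancels exactly with the corresponding contribution $+\tfrac{1}{2}\langle \nabla \tau_{r,p}(h), \nabla \psi \rangle$ coming from $-G_{r,p}$, and similarly the two $-\tau_{r,p}(h)$ terms cancel. After these cancellations one is left with
$$\mathcal{N}_{r,p}(\psi, h) = S(\omega_{r,p} + \ddb \psi) + \mathcal{D}^{\ast}_{\omega_{r,p}} \mathcal{D}_{\omega_{r,p}} (\psi) - \tfrac{1}{2}\langle \nabla S(\omega_{r,p}), \nabla \psi \rangle - C_{r,p},$$
where $C_{r,p}$ is a function independent of both $\psi$ and $h$. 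In particular $\mathcal{N}_{r,p}$ has no $h$-dependence (consistent with the statement of the lemma), and it is precisely the difference between the scalar curvature at $\omega_{r,p} + \ddb\psi$ and its first-order Taylor expansion at $\psi = 0$.

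With this reduction, the desired estimate is exactly \cite[Lemma 7.1]{fine1}, and I would invoke that argument directly. Writing $S(\omega_{r,p}+\ddb\psi)$ locally as a rational function in the entries of $g_{r,p} + \partial \overline{\partial} \psi$ and its derivatives up to order two, with denominator a power of $\det(g_{r,p} + \partial \overline{\partial} \psi)$, and then subtracting the first-order Taylor expansion in $\psi$, leaves an expression each of whose terms contains at least two derivative factors of $\psi$ of order at most four. For $c$ chosen small enough that $g_{r,p} + \partial \overline{\partial} \psi$ remains uniformly K\"ahler on the $L^{2}_{k+4}$-ball of radius $c$, Sobolev multiplication in $L^2_{k+4}(g_r)$ (valid because, for $k$ taken as in the paper, $L^{2}_{k+4}$ embeds into $C^{4,\alpha}$) delivers the quadratic bound $\|\mathcal{N}_{r,p}(\psi)\|_{L^2_k} \leq C\,\|\psi\|_{L^{2}_{k+4}}^{2}$. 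Polarising this estimate, by applying the mean value theorem to the line segment from $\psi$ to $\phi$ and repeating the pointwise algebraic bookkeeping on the derivative, yields the Lipschitz form claimed.

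The hard part is to verify that the constant $C$ can be taken uniform in $r$ for $r \gg 0$. This requires $r$-uniform control of the Sobolev constants of $L^2_{k+4}(g_r)$ and the embedding into $C^{4,\alpha}$, of the curvature tensor of $\omega_{r,p}$ and its covariant derivatives, and of the lower bound for $\det(g_{r,p} + \partial \overline{\partial} \psi)$ on the small ball. All of these are established by Fine in \cite[Sections 5 and 6]{fine1} for the unperturbed family $g_{r,0}$; and since by Theorem \ref{thm:approx-soln} the difference $\omega_{r,p} - \omega_{r,0} = \ddb(\varphi_{p} + \lambda_{p})$ has derivatives of every order bounded uniformly in $r$ (the functions $f_i, l_i$ being fixed smooth functions independent of $r$), the same uniform bounds persist for $g_{r,p}$. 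This completes the plan.
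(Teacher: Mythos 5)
Your proposal is correct and follows essentially the same route as the paper: after observing that the $h$-dependent terms of $F_{r,p}$ and $G_{r,p}$ cancel, so that $\mathcal{N}_{r,p}$ is just the nonlinear remainder of the scalar curvature map, the Lipschitz bound comes from the mean value theorem along the segment $\chi_t=(1-t)\phi+t\psi$ together with Fine's estimate on how the linearised scalar curvature operator varies with the K\"ahler potential (the paper cites \cite[Lemma 2.10]{fine1} for this), with $r$-uniformity supplied by Fine's Sobolev and geometry bounds for the metrics $\omega_{r,p}$. Your explicit identification of the cancellation and of $\mathcal{N}_{r,p}$ as the second-order Taylor remainder of $S$ is a slightly more detailed account of the same argument.
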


\begin{proof} This follows from the Mean Value Theorem. Indeed, let $\chi_t = (1-t) \phi + t \psi$. Then there is a $t\in [0,1]$ such that $$ \mathcal{N}_{r,p} (\phi) - \mathcal{N}_{r,p} (\psi) = \big(D \mathcal{N}_{r,p}\big)_{\chi_t} \big( \psi - \phi \big) . $$ Moreover, $\big(D \mathcal{N}_{r,p}\big)_{\chi_t}$ is the difference between the linearisations of the extremal map at $\omega_{r,p}$ and $\omega_{r,p} + \ddb \chi_t. $ So the estimate reduces to a similar estimate for the change in the linearised operator for a small change in the potential. This follows as in Fine's work from \cite[Lemma 2.10]{fine1}, which can be applied provided $k$ is chosen sufficiently large.
\end{proof}

These are the necessary ingredients to prove our main result.

\begin{proof}[Proof of Theorem \ref{mainthm-sec6}] The proof is an application of the quantitative implicit function theorem stated as Theorem \ref{implicitfnthm} to the extremal operator $F(\phi,h)$ of equation \eqref{operator}. 

The first step is to find a Lipschitz constant for  the non-linear terms of this operator, namely $F_{r,p}-DF_{r,p} = \mathcal{N}_{r,p}$. From Lemma \ref{lemma:nonlinearterms}, we  obtain a $C> 0$ such that for all balls of sufficiently small radius $\lambda$, the non-linear term $\mathcal{N}_{r,p}$ is Lipschitz on the ball of radius $\lambda$ with Lipschitz constant $\lambda C$. Thus for $r$ sufficiently large, the radius $\delta'_r $ of the ball on which the non-linear part of $F$ is Lipschitz with constant $ \frac{1}{2 \| P_{r,p} \|} $ is bounded below by a constant multiple of $r^{-3}$. It follows that the corresponding $\delta_r$ in the statement of Theorem \ref{implicitfnthm} is bounded below by $Cr^{-6}$, for some  (possibly different) constant $C>0$.

The next estimate that Theorem \ref{implicitfnthm} requires is a bound in $L^2_k$ of order $r^{-6-\varepsilon}$ for some $\varepsilon >0$ on the approximate solution $F(0,0)$. For this, we use the bounds of Theorem \ref{thm:approx-soln} on the metrics $\omega_{r,p}$. These estimates are pointwise estimates. However, as in \cite[Lemma 5.6 and 5.7]{fine1}, this implies a similar bound in the $C^{k}$-norm, by using a patching argument that allows one to go from bounds in the local model to global bounds. So in the $C^k$-norms, $F(0,0)$ is $O(r^{-p-1})$. Note that here we think of the $b_i$ in the expansion of $\beta_p$ as fixed functions, not depending on $r$, since they are of the form $r^{-1} \tau_{r,p} (\tilde{b}_i)$ for fixed functions $\tilde{b}_i$, and $$r^{-1} \tau_{r,p} (\tilde{b}_i) = \tilde{b}_i + O(r^{-1}).$$  

We require similar bounds with respect to the $L^2_{k}$-norms. For this note that the $O(r^{-p-1})$ bounds on $F(0,0)$  imply that the $p^{\textnormal{th}}$ approximate solution is $O(r^{-p- \frac{1}{2}})$, just as in \cite[Lemma 5.7]{fine1}. In particular, if $p$ is chosen to be $6$ or more, $F(0,0)$ satisfies the required bound and the extremal equation has a solution for all $r$ sufficiently large.
\end{proof}

 \bibliography{fibrations}
\bibliographystyle{amsplain}

\end{document}